\newcommand\numberthis{\addtocounter{equation}{1}\tag{\theequation}}
  \numberwithin{equation}{section} 
\DeclareMathOperator{\one}{\mathbbm{1}} 
\numberwithin{equation}{section}
\theoremstyle{plain} 
    \newtheorem{theorem}{Theorem}[subsection]
    \newtheorem{lemma}[theorem]{Lemma}
    \newtheorem{proposition}[theorem]{Proposition}
    \newtheorem{corollary}[theorem]{Corollary}
    \newtheorem{claim}[theorem]{Claim}
    \newtheorem{definition}[theorem]{Definition}
    \newtheorem{fact}[theorem]{Fact}
    \newtheorem{remark}[theorem]{Remark}
    \newtheorem{example}[theorem]{Example}
\DeclareMathOperator{\Ba}{\mathcal{B}}
\DeclareMathOperator{\ESD}{ESD}
\newcommand{\Khorunzkyintegral}{\sqrt{u}\int_0^{\infty}\frac{\J(2\sqrt{uv})}{\sqrt{v}}}
\newcommand {\ota}{\dot{\iota}}
\DeclareMathOperator{\J}{J_1}
\DeclareMathOperator{\Tr}{Tr}
\DeclareMathOperator{\tr}{tr}
\DeclareMathOperator{\dd}{d}
\DeclareMathOperator{\Var}{Var}
\DeclareMathOperator{\St}{S}
\DeclareMathOperator{\R}{R}
\newcommand{\E}{\mbox{${\mathbb E}$}}
\newcommand{\bigO}{\mbox{${\mathrm O}$}}
\newcommand{\lito}{\mbox{${\mathrm o}$}}
\newcommand{\vep}{\varepsilon}
\newcommand {\prob}{\mathbb{P}}
\newcommand{\e}{\mathrm{e}}
\newcommand{\M}{\mathbf{M}}
\newcommand{\C}{\mathbb{C}}
\newcommand{\bA}{\mathbf{A}}
\definecolor{xdxdff}{rgb}{0.49019607843137253,0.49019607843137253,1}
\definecolor{ffffff}{rgb}{1,1,1}
\definecolor{ududff}{rgb}{0.30196078431372547,0.30196078431372547,1}
\definecolor{zzttqq}{rgb}{0.6,0.2,0}
\begin{document}
\title[Inhomogeneous random graphs]{Limiting Spectra of inhomogeneous random graphs}

\author[L.~Avena]{Luca Avena} 
\address{{ Leiden University, Mathematical Institute, Niels Bohrweg 1 2333 CA, Leiden. The Netherlands.} \linebreak
{DIMAI, Dipartimento di Matematica e Informatica ``Ulisse Dini", Università degli studi di Firenze, Viale G.B. Morgagni 67A, 50134, Florence, Italy.}}
\email{luca.avena@unifi.it}
\author[R.~S.~Hazra]{Rajat Subhra Hazra}
\address{University of Leiden, Niels Bohrweg 1, 2333 CA, Leiden, The Netherlands}
\email{r.s.hazra@math.leidenuniv.nl}
\author[N. Malhotra]{Nandan Malhotra}
\address{University of Leiden, Niels Bohrweg 1, 2333 CA, Leiden, The Netherlands}
\email{n.malhotra@math.leidenuniv.nl}

\date{\today}

\begin{abstract}
We consider sparse inhomogeneous Erd\H{o}s-R\'enyi random graph ensembles where edges are connected independently with probability $p_{ij}$. We assume that $p_{ij}= \varepsilon_N f(w_i, w_j)$ where $(w_i)_{i\ge 1}$ is a sequence of deterministic weights, $f$ is a bounded function and $N\varepsilon_N\to \lambda\in (0,\infty)$. We characterise the limiting moments in terms of graph homomorphisms and also classify the contributing partitions. We present an analytic way to determine the Stieltjes transform of the limiting measure. The convergence of the empirical distribution function follows from the theory of local weak convergence in many examples but we do not rely on this theory and exploit combinatorial and analytic techniques to derive some interesting properties of the limit. We extend the methods of \cite{KSV2004} and show that a fixed point equation determines the limiting measure. The limiting measure crucially depends on $\lambda$ and it is known that in the homogeneous case, if $\lambda\to\infty$, the measure converges weakly to the semicircular law (\cite{Jung-Lee}). We extend this result of interpolating between the sparse and dense regimes to the inhomogeneous setting and show that as $\lambda\to \infty$, the measure converges weakly to a measure which is known as the operator-valued semicircular law. 

\end{abstract}
\keywords{generalized random graphs, inhomogeneous random graph, empirical spectral distribution, spectrum, free probability}
\subjclass[2000]{05C80, 60B20, 60B10, 46L54}
\maketitle

\tableofcontents 

\section{\bf Introduction}\label{intro}
Homogeneous Erd\H{o}s-R\'enyi Random Graphs (ERRG) serve as the basis for many mathematical theories in random graphs. Real-world networks are highly inhomogeneous and have a far more complex structure. Various attempts have been made to generalize this to other kinds of random graph models. One of the successful extensions is the inhomogeneous Erd\H{o}s-R\'enyi random graph model introduced by \cite{bollobas2007phase}. This graph has $N$ vertices labeled by $[N]={1,..., N}$, and edges are present independently with probability $p_{ij}$ given by $p_{ij} = \frac{f(x_i, x_j)}{N}\wedge 1$, where $f$ is a nice symmetric kernel on a state space $S\times S$, and $x_i$ are certain attributes associated with vertex $i$ belonging to $S$.  If $f$ is bounded, the graph is a sparse random graph. To introduce the non-sparse regime, in this article, we consider a small variant of the above inhomogeneous random graph. The vertex set remains the same, but the connection probabilities are given by
\begin{equation}\label{eq:formp}
p_{ij} = \varepsilon_N f(w_i, w_j) \wedge 1,
\end{equation}
where $\varepsilon_N$ is a tuning parameter, $(w_i)$ is a sequence of deterministic weights, and $f$ is a symmetric, bounded function on $[0,\infty)^2$. The weights can signify a property of vertex $i$. They can also be generally random, but we do not consider this case. Note that when $N\varepsilon_N \to \infty$, the average degree is unbounded, and when $N\varepsilon_N = \mathrm{O}(1)$, the average degree is bounded. We call the former case \emph{dense} and the latter case \emph{sparse}. In the sparse case, the properties of the connected components were studied in \cite{bollobas2007phase}. They studied the properties of the connected components and their relationship with the branching process. It was shown that the largest component of the graph has a size of order $N$ if the operator norm of the kernel operator corresponding to $f$ is strictly greater than $1$ (see also \cite[Theorem 3.9]{remcovol2}). In the subcritical case, the sizes of the largest connected components can exhibit different behaviour compared to the ERRG. The study of the largest connected components in various inhomogeneous random graphs has attracted a lot of attention (see, for example, \cite{bhamidi2010scaling,broutin2021limits, bet2023first, van2013critical, devroye2014connectivity}). In this article, we are interested in the empirical distribution of the eigenvalues of the adjacency matrix of the graph and how the transition occurs from the sparse to the dense case in terms of the limiting spectral distribution. There hasn't been much literature in this area, even though various specific graphs have been studied. For example, the largest eigenvalue of the sparse Chung-Lu random graph was studied in \cite{chung2003eigenvalues}, and this was extended to an inhomogeneous setting by \cite{benaych2020spectral, benaych2019largest}. The bulk of the spectrum of sparse graphs is mainly studied through local weak convergence. Here, we present a unifying approach to understanding both the sparse and the dense cases, allowing us to interpolate between the two regimes.

In the case of homogeneous ERRG, it is known that in the dense case, the empirical distribution converges to the semicircle law after an appropriate scaling (\cite{tran2013sparse}). In the sparse case, it converges to a measure that depends on the parameter $ N\varepsilon_N \to \lambda$. The behaviour is much more complicated in the sparse case. Various interesting properties were predicted by \cite{BG2001}. The existence of the limiting distribution was proved by \cite{KSV2004}, who also showed some interesting properties of the moments and the limiting Stieltjes transform. The local geometric behaviour of sparse random graphs can be well studied using the theory of local weak convergence (LWC), which builds on the works \cite{Aldous:Lyons} and \cite{Benjamini:Schramm}. It roughly describes how a graph looks like in the limit around a uniformly chosen vertex. For a detailed review of LWC and various other applications, see \cite{remcovol2}. In a remarkable work by \cite{bordenave:lelarge:2010}, it was proved that if a graph with $N$ vertices converges locally weakly to a Galton-Watson tree, then the Stieltjes transform of the empirical spectral distribution converges in $L^1$ to the Stieltjes transform of the spectral measure of the tree, and it satisfies a recursive distributional equation. The example of homogeneous ERRG was treated in \cite[Example 2]{bordenave:lelarge:2010}. The limiting measure of sparse ERRG depends on $\lambda$ and is still very non-explicit. It was proved by \cite{bordenave:virag:sen,arras2021existence} that the measure has an absolutely continuous component if and only if $\lambda>1$. The size of the atom at the origin was shown by \cite{bordenave:lelarge:salez}, and the nature of the atomic part of the measure was studied in the same article. The study of so-called extended states at origin was initiated in \cite{coste:salez}, and it was shown that for $\lambda < e$, there were no extended states, and for $\lambda > e$, it has extended states. All these results were conjectured in \cite{BG2001}. Most of these results on local limits show that properties are generally true for unimodular Galton Watson trees.  

In the simulations of \cite{BG2001}, it is clear that when $\lambda$ is slightly larger than 1, the limiting measure already starts taking the shape of the semicircular law. It was shown in \cite{Jung-Lee} that indeed, if $\lambda\to\infty$, then the limiting measure converges to the semicircular law. The main motivation of this work comes from the work of \cite[Theorem 1]{Jung-Lee}, and we extend the results from ERRG to inhomogeneous models. We explicitly derive the moments of the limiting measure for the inhomogeneous setting, extending the works of \cite{KSV2004}, albeit with a much simpler proof. The moments of the limiting measure depend on certain kinds of graph homomorphism counts, which also appeared in the works of \cite{zhu2020graphon}. Although the theory of local weak convergence is very useful, we do not know if it can be used to derive the moments of the limiting measure. We also study the Stieltjes transform of the limiting measure, following the idea of \cite{KSV2004}, and attempt an expansion of it for $\lambda$ large enough. This has also gained attention in the physics literature, see references in \cite{physics:2023}. 
We show that when $\lambda \gg 1$, the limiting moments closely resemble those of the inhomogeneous ERRG, as derived in \cite{chakrabarty:hazra:denhollander:sfragara} and also implied by the work of \cite{zhu2020graphon}. In \cite{chakrabarty:hazra:denhollander:sfragara}, they considered inhomogeneous ERRG to have weights $w_i= i/N$, and $N\vep_N\to \infty$. This result can be extended to general deterministic weights without significant effort, and we state this general result in Section \ref{pap1-section 2}. The limiting measure is well-known in the free probability literature and appears as a universal object in many inhomogeneous systems, referred to as the \emph{operator-valued semicircle law} \cite[Theorem 22.7.2]{speicher}. The Stieltjes transform satisfies a recursive analytic equation. We derive the Stieltjes transform in the sparse setting using a fixed-point equation. The fixed point is simpler in the case of homogeneous ERRG, but in the inhomogeneous case, it becomes more complex. We explicitly characterise  this fixed-point equation. We believe that in the future, this will aid in determining the rate of convergence of the empirical spectral distribution, which can be precisely quantified in terms of $\lambda$ and $N$. The rates of convergence in the free central limit theorem were recently explored in \cite{Banna:Mai}, but these results are not directly applicable to our setting. We leave this as an open problem. Obtaining an explicit rate of convergence will provide an exact explanation of why the limiting measure in the sparse setting is very close to the non-sparse setting for relatively small $\lambda>1$.

{\bf Brief summary of the results.} The two main results of this work aim to characterise the limiting spectral measure of inhomogeneous Erd\H{o}s-R\'enyi random graphs. Our first result, Theorem \ref{pap1-momenttheorem}, gives a characterisation of the moments of this measure, where the $k^{\text{th}}$ moment for any $k\geq 0$ is described in terms of homomorphism densities of the inhomogeneity function $f$ and special classes of partitions of the tuple $[k]$. We can recover the moments of the dense regime asymptotically (as $\lambda\to \infty$) using this result. The second result Theorem \ref{pap1-Resolventtheorem} provides an analytic characterisation of the measure. In particular, we provide an analytic characterisation of a functional of the resolvent of the adjacency matrix in terms of a fixed point equation. As a consequence, in Corollaries \ref{pap1-Stieltjescorollary} and \ref{pap1-Stieltjeslargelambda}, we obtain the Stieltjes transform of the sparse and dense limiting measures. The form of the limiting Stieltjes transform can be seen as an alternative description of the form obtained through local weak convergence (whenever it applies).  

{\bf Outline}. We begin Section \ref{pap1-section 2} by describing the model and stating the results of the dense regime. We state the assumptions on the sparse setting more explicitly and proceed by stating our main results for this setting. We then describe a relationship with local weak convergence and also give some examples of popular random graph models. We show that the sparse Chung-Lu type model falls into our setting, and while the Norros-Riettu model and the Generalised Random Graph models do not directly fall into our setting, we show that asymptotically the three models have the same spectral distribution, which has a free-multiplicative part that can be seen from our main results. 

In Section \ref{pap1-Momentsection}, we prove our first main result which takes a combinatorial approach, and we set up all the necessary tools used in proving the result. We identify the moments of the limiting spectral measure in terms of partitions of a tuple and graph-homomorphism densities. We provide a characterisation of the partitions and explicit expressions for the moments that are given by homomorphism densities defined based on these partitions. We further identify a leading order of the moments and a polynomial in $\lambda^{-1}$, which was also seen for the homogeneous setting in \cite{Jung-Lee}. 

In Section \ref{pap1-Resolventsection}, we prove our second main result which in contrast has an analytic flavour. We set up the relevant analytic structures, and instead of working directly with the Stieltjes Transform, we work with a functional of the resolvent of the adjacency matrix, which was introduced in \cite{KSV2004}. We borrow both fundamental and advanced tools from analysis to provide an exact analytic characterisation of the limiting spectral measure. We conclude with the Appendix as Section \ref{pap1-appendix} where we state the key analytic tools we use in Section \ref{pap1-Resolventsection}.
\section{\bf Setting and Main results}\label{pap1-section 2}
\subsection{\bf Model}
We consider the inhomogeneous Erd\H{o}s-R\'enyi random graph (IER) $\mathbb G_N$ on the vertex set $[N]=\{1, \ldots, N\}$ where edges are independently added with probability $p_{ij}$. As mentioned before we will assume that $p_{ij}$ has a special form as
\begin{equation*}
p_{ij}= \varepsilon_N f( w_i, w_j) \wedge 1,
\end{equation*}
where $\varepsilon_N$ is a tuning parameter such that $\vep_N\to0$, $(w_i)_{i\ge 1}$ is a sequence of deterministic non-negative weights and $f:[0, \infty)^2\to [0,\infty)$  is bounded and continuous.  We will use $\prob_N$ to denote the law of this random graph and we will drop the subscript $N$ for notational convenience, and $\E$ will be the expectation with respect to $\prob$. We will always assume that $N$ is large enough and hence $\varepsilon_N$ is small enough to make $p_{ij}\le 1$ since $f$ is bounded.

 Let $\M_N$ denote the adjacency matrix of the graph $\mathbb G_N$, that is, the $(i,j)$-th entry is $1$ if $i$ shares an edge with $j$, and $0$ otherwise. So $\M_N$ is a symmetric matrix, where any entry $\M_N(i,j)$ is distributed as Bernoulli random variable with parameter $p_{ij}$ as in \eqref{eq:formp} and $\{ \M_N(i,j), i\geq j\}$ is an independent collection. Instead of studying the adjacency matrix $\M_N$ we will study the scaled adjacency matrix. In particular, we do a CLT type scaling by the variance of the entries, that is, we study the matrix
\begin{align}\label{pap1-eq:A_NM_N} \frac{1}{\sqrt{N\varepsilon_N(1-\varepsilon_N)}}\M_N.
\end{align}
The empirical measure which puts mass $1/N$ on each eigenvalue of an $N\times N$ random matrix $\bA_N$ is called the \emph{Empirical Spectral Distribution} of $\bA_N$, and is denoted by 
\begin{equation}\label{eq:esd:def}
\ESD(\bA_N) := \frac{1}{N}\sum_{i=1}^N\delta_{\lambda_i}.
\end{equation}
We are interested in studying the following object:
\begin{align*}
    \ESD\left(\frac{\M_N}{\sqrt{N\vep_N(1-\vep_N)}}\right) = \frac{1}{N}\sum_{i=1}^N\delta_{\lambda_i},
\end{align*}
where $\lambda_1,\ldots,\lambda_N$ are the eigenvalues of $(N\vep_N(1-\vep_N))^{-1/2}\M_N$. 

We are interested in the weak convergence (in probability) of the above measure and the limiting measure is called the \emph{Limiting Spectral Distribution} (LSD). The limiting measure depends on the following two geometric regimes in random graphs and its properties differ in the two cases: 
\begin{itemize}
    \item {\bf Dense Regime}:   $\varepsilon_N \to 0$ and $N\varepsilon_N \to \infty$. The connectivity regime with $N\varepsilon_N \gg C\log N$ falls in this regime.
    \item {\bf Sparse Regime} : $\varepsilon_N \to 0$ and $N\varepsilon_N \to \lambda \in (0,\infty)$.
\end{itemize}

\paragraph{\bf Dense Regime:}
In literature, the dense regime is characterised by $\varepsilon_N \equiv \text{constant}$ but we will not use the features of dense graphs in this article and hence by abuse of terminology we say that a graph is dense when it is not sparse. Let us now recall briefly what happens in the dense regime. The following result was proved in \cite{chakrabarty:hazra:denhollander:sfragara} and also can be obtained from \cite{zhu2020graphon}. 
\begin{theorem}[ {\bf ESD in the dense case}]\label{theorem:dense} Consider the inhomogeneous ER graph with $p_{ij}$ as in \eqref{eq:formp} with $\vep_N\to 0$ and $N\vep_N\to \infty$ . Suppose the deterministic weights satisfy the following assumption:

 Let $o_N$ be an uniform random variable on $[N]$ and let $W_N= w_{o_N}$. We assume that there exists a $W$ with law $\mu_w$ such that $$W_N \xrightarrow{d} W.$$
Then there exists a measure $\mu_f$ which is compactly supported such that  
$$\lim_{N\to\infty}\ESD\left(\frac{\M_N}{\sqrt{N\vep_N(1-\vep_N)}}\right)= \mu_f \text{ weakly in probability}.$$  
\end{theorem}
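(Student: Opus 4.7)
The plan is the classical moment method adapted to inhomogeneous Bernoulli entries. For each fixed $k \geq 1$, write
\begin{equation*}
\frac{1}{N}\Tr\left(\frac{\M_N}{\sqrt{N\vep_N(1-\vep_N)}}\right)^{k} = \frac{1}{N(N\vep_N(1-\vep_N))^{k/2}} \sum_{i_1,\ldots,i_k \in [N]} \M_N(i_1,i_2)\M_N(i_2,i_3)\cdots\M_N(i_k,i_1),
\end{equation*}
and show that (i) the expectation converges to a limit $m_k$, (ii) the variance vanishes as $N \to \infty$, and (iii) the moment sequence $(m_k)$ determines a unique probability measure $\mu_f$, which will then be the LSD.

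For the expectation, group the tuples $(i_1,\ldots,i_k)$ by the isomorphism class of the closed walk they define on $[N]$, recorded by the number $v$ of distinct vertices and $e$ of distinct edges with edge multiplicities $m_{s,t} \geq 1$ summing to $k$. Because $\M_N(i,j) \in \{0,1\}$ one has $\E[\M_N(i,j)^a] = p_{ij} = \vep_N f(w_i,w_j) + \cO(\vep_N^2)$ for every $a \geq 1$, and by independence of distinct entries each isomorphism class contributes at most $C_k N^{v-1-k/2}\vep_N^{e-k/2}$ to the normalised expected trace. Connectedness of the walk forces $v \leq e+1$, and closure with total multiplicity $k$ forces $e \leq k/2$; under the dense hypothesis $N\vep_N \to \infty$, only classes saturating both bounds survive, i.e.\ double traversals of trees on $\ell+1$ vertices (with $k = 2\ell$), so $m_k = 0$ for odd $k$. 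Parametrising these survivors by rooted plane trees $\tau$ with $\ell$ edges yields
\begin{equation*}
\E\left[\frac{1}{N}\Tr\left(\frac{\M_N}{\sqrt{N\vep_N(1-\vep_N)}}\right)^{2\ell}\right] = \sum_{\tau}\, \frac{1}{N^{\ell+1}} \sum_{\mathbf{i} \in [N]^{\ell+1}} \prod_{\{s,t\}\in E(\tau)} f(w_{i_s},w_{i_t}) + \lito(1),
\end{equation*}
and the hypothesis $W_N \Rightarrow W$ together with continuity and boundedness of $f$ shows this Riemann sum converges to $m_{2\ell} := \sum_\tau \int \prod_{\{s,t\}\in E(\tau)} f(x_s,x_t) \prod_{v \in V(\tau)} d\mu_w(x_v)$. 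The bound $m_{2\ell} \leq C_\ell \|f\|_\infty^\ell$ (with $C_\ell$ the $\ell$-th Catalan number) then gives Carleman's condition and compact support of $\mu_f$.

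To upgrade expectation-convergence to convergence in probability, expand $\Var\bigl(\tfrac{1}{N}\Tr(\cdot)^k\bigr)$ as a sum over ordered pairs of closed walks; by the same order-of-magnitude argument, edge-disjoint pairs reproduce the square of the mean to leading order, while pairs sharing at least one edge lose a factor of $N$ and contribute $\cO(1/N)$, so the variance tends to zero. Convergence of moments plus uniqueness of $\mu_f$ then yields weak convergence in probability. The main obstacle is the combinatorial bookkeeping in the moment count: one must argue uniformly in $k$ that no non-tree contribution survives under the dense scaling and that the inhomogeneity enters the limit only through the tree integrals against $\mu_w^{\otimes(\ell+1)}$. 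The continuity of $f$ together with $W_N \Rightarrow W$ decouples these Riemann sums from the combinatorial enumeration of plane trees, which is why the extension from the weights $w_i = i/N$ treated in \cite{chakrabarty:hazra:denhollander:sfragara} to general deterministic weights satisfying $W_N \Rightarrow W$ is essentially routine.
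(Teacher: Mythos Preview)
The paper does not prove Theorem~\ref{theorem:dense} itself; it is quoted from \cite{chakrabarty:hazra:denhollander:sfragara} and \cite{zhu2020graphon}. Your moment-method sketch is indeed the standard route those references take, and steps (ii) and (iii) are fine. However, step (i) as written has a real gap.

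The assertion ``closure with total multiplicity $k$ forces $e\le k/2$'' is false for the \emph{uncentered} matrix $\M_N$. A closed walk can traverse an edge exactly once (any cycle does), and since $\E[\M_N(i,j)^1]=p_{ij}\sim\vep_N$ is nonzero, such walks contribute. Concretely, for $k=3$ the triangle has $v=e=3$ and your own bound gives
\[
N^{v-1-k/2}\vep_N^{\,e-k/2}=N^{1/2}\vep_N^{3/2}=\frac{(N\vep_N)^{3/2}}{N},
\]
which diverges whenever $N\vep_N\gg N^{2/3}$ (in particular for $\vep_N$ constant). So the raw moments of $\ESD\bigl(\M_N/\sqrt{N\vep_N(1-\vep_N)}\bigr)$ need not converge in the dense regime; the outlier(s) coming from the mean spoil them. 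By contrast, the sparse proof in Section~\ref{pap1-Momentsection} is safe precisely because there $N\vep_N\to\lambda$ is bounded, so cycle terms are $\bigO(\lambda^{k/2}/N)$.

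The fix is standard: run the moment argument on the \emph{centered} matrix $\hat\M_N=\M_N-\E\M_N$, for which $\E[\hat\M_N(i,j)]=0$ forces every edge to appear at least twice and hence $e\le k/2$ as you want; then transfer to $\M_N$ via Hoffman--Wielandt (Fact~\ref{fact:HW}), since
\[
\frac{1}{N}\Tr\!\left(\frac{\E\M_N}{\sqrt{N\vep_N(1-\vep_N)}}\right)^{\!2}
=\frac{1}{N^2\vep_N(1-\vep_N)}\sum_{i,j}p_{ij}^2
\le C_f^2\,\vep_N\to 0.
\]
With that modification, your outline goes through and the passage from $w_i=i/N$ to general weights via $W_N\Rightarrow W$ is indeed routine, exactly as the paper asserts.
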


Many interesting properties of this limiting measure are known. To define the moments we need a quantity which is similar to the homomorphism density of graphons.
Define
\begin{equation}\label{pap1-graphhomo}
  t(H_k,f,\mu_w) := \int_{[0,\infty)^k} \prod_{\{a,b\}\in E(H_k)} f(w_a,w_b) \mu^{\bigotimes k}_w(\dd \mathbf{w}),
\end{equation}
where $H_k$ is a simple graph on $k$ vertices with the edge set $E(H_k)$, $\mu^{\bigotimes k}_w(\cdot)$ is the $k$-fold product measure of $\mu_w(\cdot)$, and $\mathbf{w} = (w_1,...,w_k)$. If we restrict the range of $f$ to $[0,1]$ and take $\mu_w(\cdot)$ as the Lebesgue measure on $[0,1]$, then this quantity is the standard graph homomorphism density (see \cite{lovasz2006limits}). 

The \textit{rooted planar tree} is a planar graph with no cycles, with one distinguished vertex as a root, and with a choice of  ordering at each vertex. The ordering defines a way to explore the tree starting at the root. One of the algorithms used for traversing the rooted planar trees is \textit{depth-first search}. An enumeration of the vertices of a tree is said to have depth-first search order if it is the output of the depth-first search.

We now recall the definition of a Stieltjes transform of a measure $\mu$ on $\mathbb R$. For $z\in \mathbb C^{+}$, where $\C^+$ is the upper half complex plane, the Stieltjes Transform of a measure $\mu$ is given by  
$$\St_\mu(z)= \int_{\mathbb R} \frac{1}{x-z} \mu(\dd x).$$
The following proposition gives the properties of the measure $\mu_f$ which appear in Theorem \ref{theorem:dense}.
\begin{proposition}
\begin{enumerate}
    \item[(a)]\emph{[{\bf Moments}]} The measure $\mu_f$ is the unique probability measure identified by the following moments:
\begin{equation}\label{moments:zhu}
    \int x^{2k}\mu_f(\dd x)=\sum_{j=1}^{C_{k}}t(T_{j}^{k+1},f, \mu_w),\ \int x^{2k+1}\mu_f(\dd x)=0,  \,\, k\ge 0,
\end{equation} 
where $T_{j}^{k+1}$ is the $j^{th}$ rooted planar tree with $k+1$ vertices and $C_{k}$ is the $k^{th}$ Catalan number.
\item[(b)] \emph{[{\bf Stieltjes transform}]} There exists an unique analytic function $\mathcal H$ defined on $\mathbb C^+\times [0,\infty)$ such that
$$
\St_{\mu_f}(z)=\int_0^\infty \mathcal{H}(z, x) \mu_w(\dd x),
$$
and $\mathcal{H}(z, x)$ satisfies the integral equation
\begin{equation}\label{eq:hzx}
z \mathcal{H}(z, x)=1+\mathcal{H}(z, x) \int_0^\infty \mathcal{H}(z, y) f(x, y)\mu_w(\dd y), \quad x\ge 0.
\end{equation}
\end{enumerate}

\end{proposition}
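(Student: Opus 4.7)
We prove (a) via the method of moments and (b) via the resolvent/Schur complement method combined with a fixed-point argument; both proofs adapt standard random-matrix ideas to the inhomogeneous setting, and we sketch the main steps.

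For part (a), we start from the trace-moment identity
\begin{equation*}
\E\left[\frac{1}{N}\tr\left(\frac{\M_N}{\sqrt{N\vep_N(1-\vep_N)}}\right)^{k}\right] = \frac{1}{N^{1+k/2}(\vep_N(1-\vep_N))^{k/2}}\sum_{i_1,\ldots,i_k\in[N]}\E\bigl[\M_N(i_1,i_2)\cdots \M_N(i_k,i_1)\bigr].
\end{equation*}
Group index tuples by their coincidence pattern; each pattern corresponds to a closed walk in a graph on $v$ distinct vertices using $e$ distinct edges with edge multiplicities summing to $k$. By independence and $\E[\M_N(a,b)^m]=p_{ab}=\vep_N f(w_a,w_b)(1+\lito(1))$ for all $m\ge 1$, a class contributes of order $N^{v-1-k/2}\vep_N^{e-k/2}$. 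Under $N\vep_N\to\infty$, only classes with $v=k/2+1$ and $e=k/2$ survive, forcing the walk's support to be a tree in which each edge is traversed exactly twice. This excludes odd $k$, and for even $k$ such walks are in bijection with rooted planar trees on $k/2+1$ vertices via depth-first exploration, giving $C_{k/2}$ trees. For each tree $T$, the remaining normalised vertex-sum of $\prod_{\{a,b\}\in E(T)}f(w_{i_a},w_{i_b})$ converges to $t(T,f,\mu_w)$ by boundedness and continuity of $f$ together with $W_N\xrightarrow{d}W$. A parallel second-moment computation (pairing two walks) yields $\Var\bigl(\frac{1}{N}\tr(\bA_N^k)\bigr)=\bigO(N^{-2})$, upgrading to convergence in probability. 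Uniqueness of $\mu_f$ as a probability measure follows from Carleman's condition, since the $2k$-th moment is bounded by $C_k\|f\|_\infty^k$.

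For part (b), apply the Schur complement formula to the resolvent $G_N(z):=(\bA_N-zI)^{-1}$:
\begin{equation*}
G_N(z)_{ii}=\frac{1}{-z-\frac{1}{N\vep_N(1-\vep_N)}\sum_{j,k\ne i}\M_N(i,j)\bigl[(\bA_N^{(i)}-zI)^{-1}\bigr]_{jk}\M_N(k,i)},
\end{equation*}
where $\bA_N^{(i)}$ denotes the minor obtained by deleting row and column $i$. The quadratic form in the denominator concentrates on its conditional expectation given $\bA_N^{(i)}$, which equals $\frac{1}{N}\sum_{j\ne i}f(w_i,w_j)G_N^{(i)}(z)_{jj}$ to leading order, since the cross-terms $\E[\M_N(i,j)\M_N(i,k)]$ with $j\ne k$ carry an extra factor of $\vep_N$. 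Propagating this approximation uniformly in $i$ suggests that $G_N(z)_{ii}\to\mathcal{H}(z,w_i)$ for a deterministic function $\mathcal{H}$ satisfying \eqref{eq:hzx} (up to the usual sign convention for resolvents versus Stieltjes transforms). Existence and uniqueness of an analytic $\mathcal{H}:\C^+\times[0,\infty)\to\C^+$ with $|\mathcal{H}(z,x)|\le 1/\mathrm{Im}(z)$ is obtained by a Banach fixed-point argument applied to the map $\mathcal{H}\mapsto\bigl(-z-\int f(x,y)\mathcal{H}(z,y)\mu_w(\dd y)\bigr)^{-1}$: for $\mathrm{Im}(z)$ sufficiently large it is a strict contraction in the supremum norm with constant $\|f\|_\infty/\mathrm{Im}(z)^2$, and uniqueness extends to all $z\in\C^+$ by analytic continuation. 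Averaging $G_N(z)_{ii}\to\mathcal{H}(z,w_i)$ against the empirical weight measure and passing to the limit via $W_N\xrightarrow{d}W$ with dominated convergence yields $\St_{\mu_f}(z)=\int_0^\infty\mathcal{H}(z,x)\mu_w(\dd x)$.

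The most delicate step in part (a) is the variance bound uniformly in $k$, and in part (b) the mean-field replacement of $G_N^{(i)}$ by its deterministic limit uniformly across $i$; both demand careful bookkeeping of the scales $N$ and $\vep_N$, and are carried out in \cite{chakrabarty:hazra:denhollander:sfragara,zhu2020graphon}, whose arguments adapt directly here.
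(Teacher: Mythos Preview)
The paper does not give its own proof of this proposition: it is stated as a known result, attributed (in the sentence preceding Theorem~\ref{theorem:dense}) to \cite{chakrabarty:hazra:denhollander:sfragara} and \cite{zhu2020graphon}. Your sketch is a faithful and correct reconstruction of the standard arguments in those references---moment method with tree enumeration for~(a), Schur complement plus contraction for~(b)---and there is nothing to fault in it.

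What the paper \emph{does} do is recover both parts indirectly, as the $\lambda\to\infty$ limit of the sparse-regime results. Part~(a) follows from Theorem~\ref{pap1-momenttheorem}(b): in the sparse moment formula the only terms surviving as $\lambda\to\infty$ are those with $|\gamma\pi|=k/2+1$, i.e.\ $\pi\in NC_2(k)$, and these are in bijection with rooted planar trees on $k/2+1$ vertices. Part~(b) is the content of Corollary~\ref{pap1-Stieltjeslargelambda}, whose proof goes through the Bessel-function identity~\eqref{pap1-exponentialBesselidentity} and the fixed point $\phi^*$ in the Banach space~$\Ba$, then extracts $\mathcal H$ by dominated convergence as $\lambda\to\infty$. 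This route is considerably more elaborate than your direct Schur-complement argument, but it is the paper's point: the dense equation~\eqref{eq:hzx} emerges as a degeneration of the sparse fixed-point structure, rather than being established from scratch.
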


\begin{example}[{\bf Rank 1}]
    One special case which arises in many examples of random graphs, and will be discussed later is when $f$ has a multiplicative structure, that is, $f(x,y)= r(x) r(y)$, where $r:[0,\infty)\to [0,\infty)$ is a bounded continuous function. In this case, the measure $$\mu_f= \mu_s \boxtimes \mu_{r(W)}$$
    where $\mu_s$ is the standard semicircle law and $\mu_{r(W)}$ is the law of $r(W)$ and $\boxtimes$ is the free multiplicative convolution of the two measures. When $r$ is identically equal to $1$ then $\mu_f=\mu_s$, the standard semicircle law. We refer to \citet[Theorem 1.3]{chakrabarty:hazra:denhollander:sfragara} for details. 
\end{example}

\paragraph{\bf Sparse regime.}

The seminal work of \cite{bordenave:lelarge:2010} characterises the limiting spectral distribution for locally tree-like graphs. In particular, if one takes $\bA_N$ to be the scaled adjacency matrix as given in \eqref{pap1-eq:A_NM_N}, they show that if the sequence of random graphs $\{\mathbb{G}_N\}_{N\geq 1}$ have a weak limit $\mathbb{G}$, and  for any uniformly chosen root $o_N\in \mathbb{G}_{N}$, its degree sequence $(\mathrm{deg}(\mathbb{G}_N,o_N))_N$ is uniformly integrable, then there exists a unique probability measure $\mu_{\lambda}$ on $\mathbb{R}$ such that $\lim_{N\to\infty}\ESD(\bA_N) = \mu_{\lambda}$ weakly in probability. Furthermore, it is shown that when $f\equiv 1$, the measure $\mu_{\lambda}$ represents the expected spectral measure associated with the root of a Galton-Watson tree with an offspring distribution of $\mathrm{Poi}(\lambda)$ and weights $1/\sqrt{\lambda}$. This result comes from the theory of local weak convergence, also known as Benjamini-Schramm convergence (see \cite{remcovol2, Benjamini:Schramm}), which is a powerful tool to study spectral measures associated with many sparse random graph models.

In particular, consider the space $\mathbb{H}$ of holomorphic functions $f:\C^+\to\C^+$, equipped with the topology induced by uniform convergence on compact sets. Then, this is a complete separable metrizable compact space. The \emph{resolvent} of the adjacency operator is given as 
\begin{align*}
    \R_{\bA_N}(z) = (\bA_N - zI)^{-1}
\end{align*} for each $z\in\C^+$. The map $z\mapsto \R_{\bA_N}(z)(i,i)$ is in $\mathbb{H}$, and the Stieltjes transform of $\ESD(\bA_N)$ is given by $\tr\R_{\bA_N}(z)$, where $\tr = N^{-1}\Tr$ denotes the normalised trace operator. Let $\mathcal{G}^*$ denote the set of rooted isomorphism classes of rooted connected locally finite graphs. Assume that the random graph sequence $(\mathbb{G}_N)_N$ has the random local limit $\mathbb{G}\in\mathcal{G^*}$, and further that $\mathbb{G}$ is a \emph{Galton Watson Tree} with degree distribution $F_*$, that is, a rooted random tree obtained from a Galton-Watson process with root having offspring distribution $F_*$ and all children having a distribution $F$ (which may or may not be the same as $F_*$). 

Let $\St_{\bA_N}(z)$ denote the Stieltjes transform of the empirical measure $\ESD(\bA_N)$. It was shown in \citet[Theorem 2]{bordenave:lelarge:2010} that there exists a unique probability measure $Q$ on $\mathbb{H}$, such that for each $z\in\C^+$
$$Y(z)\overset{d}= \left(z+ \sum_{i=1}^P Y_i(z)\right)^{-1} $$
where $P$ has distribution $F$ and $Y, \{Y_i\}_{i\geq1}$ are iid with law $Q$ and independent of $P$. Moreover
\begin{align*}
\lim_{N\to\infty}\St_{\bA_N}(z) = \E X(z) \text{ in $L^1$},
\end{align*}  
where $X(z)$ is such that: 
\begin{equation}\label{eq:bordenavelwc}
X(z) \overset{d}= -\left( z + \sum_{i=1}^{P_*} Y_i(z) \right)^{-1},
\end{equation}
where $\{Y_i\}_{i\geq1}$ are i.i.d. copies with law $Q$, and $P_*$ is a random variable independent of $\{Y_i\}_{i\geq 1}$ having distribution $F_*$. 

In \citet[Example 2]{bordenave:lelarge:2010}, we see that the sparse Erd\H{o}s-R\'{e}nyi random graph with $p=\frac{\lambda}{N}$ falls in their setup, and in particular, $P$ is distributed as $\mathrm{Poi}(\lambda)$. For a general $f$,  \citet[Theorem 1]{bordenave:lelarge:2010} still guarantees the existence of $\mu_{\lambda}$, since the graphs we will consider will have a local weak limit known as the \emph{multitype branching process} (see \cite[Chapter 3]{remcovol2} for more details).  As $f$ is bounded, we get that the degree sequence will still remain uniformly integrable. 
As mentioned before we will not follow this well-known route of local weak convergence. Instead, we show the above convergence through albeit classical methods. We now introduce the conditions under which we will work. We will have the following sparsity assumption on $\varepsilon_N$ and a regularity assumption on the function $f$ and the weights:
\begin{enumerate}[label=\textbf{A.\arabic*}]
\item \label{item:A1} {\bf Connectivity function:} Let $f:[0,\infty)^2\to [0,\infty)$ be a bounded, continuous function, with $|f|\leq C_f \in (0,\infty)$,
\item \label{item:A2} {\bf Sparsity assumption :} $N\varepsilon_N\to \lambda\in (0,\infty)$,
\item \label{item:A3} {\bf Assumption on weights:} Let $o_N$ be an uniform random variable on $[N]$ and let $W_N= w_{o_N}$. We assume that there exists a $W$ with law $\mu_w$ such that $$W_N \xrightarrow{d} W.$$
\end{enumerate}

 We make some preliminary remarks about the assumptions. Since $f$ is bounded, we can easily see that $f$ is $\mu_w-$ integrable. In the sparse setting, in most important examples, the graph is locally tree-like and this can be seen from the theory of local weak convergence.

 Note that the limit $\lambda \to\infty$ recovers the dense regime. By this choice, we can see that $1 - \varepsilon_N \approx 1$ as N becomes very large, and $N\varepsilon_N(1-\vep_N)\to \lambda$. Thus, our matrix of interest is a scaled adjacency matrix now defined as follows: 
\begin{equation}\label{eq:adjacency}
\bA_N = \frac{1}{\sqrt{\lambda}}\M_N.
\end{equation}

\subsection{\bf Main Results}
In this subsection, we state the main results of this article. As mentioned before in the introduction, we would like to understand first the limiting empirical distribution of the sparse inhomogeneous Erd\H{o}s R\'enyi (IER) Random Graph and also study the behaviour of the measure when the sparsity parameter increases.  
Recall that the adjacency matrix is defined in \eqref{eq:adjacency} and the empirical spectral distribution is denoted by $\ESD(\bA_N)$ (see \eqref{eq:esd:def}). 
In what follows, we will see that \begin{align}\label{pap1-eq:mulambda}
    \lim_{N\to\infty}\ESD(\bA_N)= \mu_\lambda \text{ weakly in probability}\end{align}
and $\mu_{\lambda}\Rightarrow \mu_f$ where $\mu_f$ is as in Theorem \ref{theorem:dense}.  
For the homogeneous case, where $f\equiv 1$, we get the final limit as the classical Wigner's semicircular law, that is, $\mu_f = \mu_{sc}$. These iterated limits were studied in \cite{Jung-Lee}. An interesting open question is how close is $\mu_{\lambda}$ to $\mu_f$. Although we do not manage to give an explicit estimate, through the moment method we show that it is very close and the structure of the moments of $\mu_f$ is hidden inside the structure of moments of $\mu_{\lambda}$. This will be our first result. To describe the moments we need to introduce some notation.

    
\subsubsection{\bf Method of moments: Combinatorial Approach} We first define the \emph{Special Symmetric Partitions} which was introduced in \cite{Bose-Saha-Sen-Sen}. Let $\mathcal P(k)$ denote the set of partitions of $k$ and $\mathcal P_2(k)$ be the set of pair partitions where each block has size $2$. Let $NC(k)$ be the set of non-crossing partitions of $[k]$ and $NC_2(k)$ be the set of non-crossing pair partitions of $[k]$. Note that $|NC_2(2k)|= \frac{1}{1+k}{2k \choose k}$ and these are known as the Catalan numbers and represent the even moments of the semicircle distribution.

\paragraph{\bf Partition terminology.}
  Let $\pi$ be a partition of a tuple $[k]$.  Let $\pi$ consist of disjoint blocks $V_1,V_2,\ldots,V_m$, for some $1\leq m\leq k$. We arrange the blocks in the ascending order of their smallest element. For any block $V_i$, a \emph{sub-block} is defined to be a subset of \emph{consecutive integers} in the block. Two elements $j$ and $k$ in a block $V_i$ are said to be \emph{successive} if for all $a$ between $j$ and $k$, $a \notin V_i$. 

\begin{definition}[Special Symmetric Partition]\label{pap1-SSP}
A partition $\pi$ of a tuple $[k]=\{1,2,...,k\}$ is said to be a Special Symmetric partition if it satisfies the following: 
    \begin{itemize}
        \item All blocks of $\pi$ are of even size.
\item  Let $V\in\pi$ be any arbitrary block, and let $a,b\in V$ be two successive elements in $V$ with $b>a$. Then, either of the following is true:
\begin{itemize}
\item[1.] $b=a+1$,  or
\item[2.] between $a$ and $b$ there are sub-blocks of even size. \\
In other words,
there exist elements $\{a_{i_1},a_{i_1+1},\ldots,a_{i_1+k_1}\}\in V_1$, $\{a_{i_2},\ldots,a_{i_2+k_2}\}\in V_2$, $\ldots, \{a_{i_l},\ldots,a_{i_l+k_l}\}\in V_l$, with $a = a_{i_1} -1$ and $b ={a_{i_l+k_l}}+1$, such that $k_1,k_2,\ldots,k_l$ are even.
\end{itemize}
\end{itemize} 
\end{definition}
We denote the class of Special Symmetric partitions as $SS(k)$. Note that for $k$ odd, $SS(k)=\emptyset$. For example, take $\pi=\{\{1, 4,5, 8\}, \{2, 3, 6, 7\}, \{9, 10\}\} \in SS(10)$. Note here that between $4$ and $5$ in the first block, there are no elements from the other blocks, and between $5$ and $8$, there is the sub-block $\{6,7\}$ that is of even size.

In \cite{Bose-Saha-Sen-Sen} a more elaborate definition was given and this is useful in computations. Later, it was shown by \cite[Section 3]{pernici2021noncrossing} that the definition in \cite{Bose-Saha-Sen-Sen} is equivalent to the above one. In \cite{pernici2021noncrossing}, the set $SS(2k)$ is denoted by $P_2^{(2)}(k)$, a special subclass of $k$-divisible partitions. 
\begin{remark} We note down some important properties of $SS(k)$:
\begin{itemize}
   \item[1.] If $k$ is even, then 
    \[
    \{\pi \in SS(k) : |\pi| = k/2\} = \{\pi \in NC_2(k)\}.
    \]
    \item[2.] $SS(2k)= NC(2k)$ for $1\le k\le 3$. When $k\ge 4$, there are partitions $\pi\in SS(2k)$ that are either crossing or non-paired. For example, for $k=8$, $\{ \{1, 2, 5, 6\}, \{3, 4, 7, 8\}\}$ is a Special Symmetric partition. In particular, crossings start appearing when there are at least two or more blocks in a partition having 4 or more elements. 

    \item[3.] The set of Special Symmetric partitions are in one-to-one correspondence with coloured rooted trees (see \cite[Lemma 5.1]{Bose-Saha-Sen-Sen}) and these trees appeared first in the analysis in the works of \cite{BG2001}.
    \end{itemize}
\end{remark}

Any partition $\pi\in \mathcal P(k)$ can be realized as a permutation of $[k]$, that is, a mapping from $[k]\to [k]$. Let $S_k$ denote the set of permutations on $k$ elements. Let $\gamma=(1, 2, \ldots, k)\in S_k$ be the shift by $1$ modulo $k$. We will be interested in the compositions of the two permutations $\gamma$ and $\pi$, denoted by $\gamma\pi$, and this will be seen below as a partition.
  \begin{definition}[Graph associated to a partition]\label{pap1-partitiongraph}  For a fixed $k\geq 1$, let $\gamma$ denote the cyclic permutation $(1,2,\ldots,k)$. For a partition $\pi$, we define $G_{\gamma\pi}=(V_{\gamma\pi}, E_{\gamma\pi})$ as a rooted, labelled graph associated with any partition $\pi$ of $[k]$, constructed as follows.
    \begin{itemize}
        \item Initially consider the vertex set $V_{\gamma\pi}=[k]$ and perform a closed walk on $[k]$ as $1\to 2\to 3\to \cdots \to k\to 1$ and with each step of the walk, add an edge. 
        \item Evaluate $\gamma\pi$, which will be of the form $\gamma\pi = \{V_1,V_2,\ldots,V_m\}$ for some $m\geq 1$ where $\{V_i\}_i$ are disjoint blocks. Then, collapse vertices in $V_{\gamma\pi}$ to a single vertex if they belong to the same block in $\gamma\pi$, and collapse the corresponding edges. Thus, $V_{\gamma\pi} = \{V_1,\ldots,V_m\}$. 
        \item Finally root and label the graph as follows. 
        \begin{itemize}
            \item \emph{Root:} We always assume that the first element of the closed walk (in this case `1') is in $V_1$, and we fix the block $V_1$ as the root. 
            \item \emph{Label:} Each vertex $V_i$ gets labelled with the elements belonging to the corresponding block in $\gamma\pi$.
        \end{itemize}
        \end{itemize} 
\end{definition}
\begin{remark}
    While $\pi$ is a \emph{partition} and $\gamma$ is a \emph{permutation}, we do a composition in the permutation sense. We read the partition $\pi$ as a permutation, compose it with the permutation $\gamma$, and finally read $\gamma\pi$ as a partition. As an example, consider $\pi = \{\{1,2\},\{3,4\}\}$ and $\gamma=(1,2,3,4)$. To compute $\gamma\pi$, we read $\pi$ as $(1,2)(3,4)$, and compute $\gamma\pi = (1,3)(2)(4)$. We finally read $\gamma\pi$ as $\{\{1,3\},\{2\},\{4\}\}$.
\end{remark}
\begin{example}
    Consider for example partitions of $k=6$ and reading the partitions as permutations and evaluating their composition with $\gamma$ gives us:
    \begin{multicols}{2}
        \begin{enumerate}
        \item  $\pi_1 = \{ \{1,2,5,6\},\{3,4\}\},$
        \item  $\pi_2 = \{ \{1,2,3,4\},\{5,6\}\},$
        \item $\pi_3 = \{ \{1,6\},\{2,3,4,5\}\}.$
    \end{enumerate}
    \columnbreak 
    \begin{enumerate}
        \item $\gamma\pi_1 = \{\{1,3,5\},\{2,6\},\{4\}\}$,
        \item $\gamma\pi_2 =\{\{1,3,5\},\{2,4\},\{6\}\} $, 
        \item $\gamma\pi_3 = \{\{1\},\{2,4,6\},\{3,5\}\} $.
\end{enumerate}
    \end{multicols}
    
     The corresponding graphs $G_{\gamma\pi_1}, G_{\gamma\pi_2}$ and $G_{\gamma\pi_3}$ are as follows:
    \begin{center}
    \begin{tikzpicture}[scale=2] 
    \node[shape=circle,draw=black,scale=0.5] (1) at (2.5,4) {2,6};
    \node[shape=circle,draw=black,scale=0.5] (2) at (1,4) {1,3,5};
    \node[shape=circle,draw=black,scale=0.5] (3) at (2.5,3) {4};
    \path [-] (1) edge node[left] {} (2);
    \path [-] (2) edge node[left] {} (3);

    \node[shape=circle,draw=black,scale=0.5] (4) at (3.5,4) {1,3,5};
    \node[shape=circle,draw=black,scale=0.5] (5) at (5,4) {2,4};
    \node[shape=circle,draw=black,scale=0.5] (6) at (5,3) {6};
    \path [-] (4) edge node[left] {} (5);
    \path [-] (4) edge node[left] {} (6);

    \path [-] (1) edge node[left] {} (2);
    \path [-] (2) edge node[left] {} (3);

    \node[shape=circle,draw=black,scale=0.5] (7) at (6,4) {2,4,6};
    \node[shape=circle,draw=black,scale=0.5] (8) at (7.5,4) {1};
    \node[shape=circle,draw=black,scale=0.5] (9) at (7.5,3) {3,5};
    \path [-] (7) edge node[left] {} (8);
    \path [-] (7) edge node[left] {} (9);
\end{tikzpicture}
 \end{center}
 One can see that structurally the three graphs are the same. However, if we root them on $V_1$, then the first two graphs are different from the third. Further, if we label the vertices as shown, all three graphs become distinct.
\end{example}

\begin{example}
    Here, we illustrate the type of graph structures that can occur for $\pi\in SS(k)$. Consider $k=8$, and the following three partitions. 
    \begin{multicols}{2}
         \begin{enumerate}
        \item $\pi_1 = \{\{1,2,3,4\},\{5,6,7,8\}\}$. 
        \item $\pi_2 = \{\{1,4,5,8\},\{2,3,6,7\}\}$.
        \item  $\pi_3 =\{\{1,2,4,5\},\{3,6,7,8\}\}$.
    \end{enumerate} 
    \columnbreak
    \begin{enumerate}
        \item $\gamma\pi_1 = \{\{1,3,5,7\},\{2,4\},\{6,8\}\}$,
        \item $\gamma\pi_2 = \{\{(1,5\},\{2,4,6,8\},\{3,7\}\}$, 
        \item $\gamma\pi_3 = \{\{1,3,7\},\{2,5\},\{4,6,8\}\}$.
    \end{enumerate}
    \end{multicols}
   
    Then, $\pi_1,\pi_2 \in SS(8)$ but $\pi_3 \notin SS(8)$. Moreover, $\pi_1$ is non-crossing whereas $\pi_2$ has 2 crossings. The corresponding graphs are as below. 
    \begin{center}
    \begin{tikzpicture}[scale=2] 
    \node[shape=circle,draw=black,scale=0.5] (1) at (2.5,4) {2,4};
    \node[shape=circle,draw=black,scale=0.5] (2) at (1,4) {1,3,5,7};
    \node[shape=circle,draw=black,scale=0.5] (3) at (2.5,3) {6,8};
    \path [-] (1) edge node[left] {} (2);
    \path [-] (2) edge node[left] {} (3);

    \node[shape=circle,draw=black,scale=0.5] (4) at (3.5,4) {2,4,6,8};
    \node[shape=circle,draw=black,scale=0.5] (5) at (5,4) {1,5};
    \node[shape=circle,draw=black,scale=0.5] (6) at (5,3) {3,7};
    \path [-] (4) edge node[left] {} (5);
    \path [-] (4) edge node[left] {} (6);

    \path [-] (1) edge node[left] {} (2);
    \path [-] (2) edge node[left] {} (3);

    \node[shape=circle,draw=black,scale=0.5] (7) at (6,4) {4,6,8};
    \node[shape=circle,draw=black,scale=0.5] (8) at (7.5,4) {1,3,7};
    \node[shape=circle,draw=black,scale=0.5] (9) at (7.5,3) {2,5};
    \path [-] (7) edge node[left] {} (8);
    \path [-] (8) edge node[left] {} (9);
    \path [-] (7) edge node[left] {} (9);
\end{tikzpicture}
 \end{center}
\end{example}

The following result is the first main result of the article. This is an extension of the results obtained recently in \cite{Bose-Saha-Sen-Sen} and the homogeneous case obtained in \cite{Jung-Lee}.

\begin{theorem}[Identification of moments]\label{pap1-momenttheorem}
(a) Let $\bA_N$ be the adjacency matrix of the sparse IER random graph as defined in \eqref{eq:adjacency} satisfying assumptions \ref{item:A1}--\ref{item:A3}. Then there exists a deterministic measure $\mu_{\lambda}$ such that 
$$\lim_{N\to\infty}\ESD(\bA_N)=\mu_{\lambda} \text{ weakly in probability}.$$
Moreover, $\mu_{\lambda}$ is uniquely determined by its moments, which are given as follows:
\begin{equation}\label{pap1-limitingmomentexpression}
m_k (\mu_{\lambda}) = \int x^k \mu_{\lambda}(\dd x) = \begin{cases}
    0, & \text{ when $k$ is odd,}\\
    \sum\limits_{l=2}^{k/2+1}\sum\limits_{\substack{\pi\in SS(k):\\ |\gamma\pi| = l}} \lambda^{l - 1 - \frac{k}{2}}\hspace{0.1cm}t(G_{\gamma\pi},f,\mu_w), & \text{ when $k$ is even},
\end{cases}
\end{equation}
where $SS(k)$ is the set of all Special Symmetric partitions of $[k]$ as defined in Definition \ref{pap1-SSP}, $G_{\gamma\pi}$ is the graph associated to a partition $\pi$ as defined in Definition \ref{pap1-partitiongraph}, and $t$ is the homomorphism density as in \eqref{pap1-graphhomo}.

(b) As $\lambda\to\infty$,
$$\mu_{\lambda}\Rightarrow\mu_f,$$  where $\mu_f$ is the measure described in Theorem \ref{theorem:dense}.

\end{theorem}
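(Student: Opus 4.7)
The plan is to prove part (a) via the method of moments and then derive part (b) by analysing the resulting moment formula as $\lambda\to\infty$.

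\textbf{Step 1: Trace expansion and edge partitions.} First I would write
\[
\E[\tr \bA_N^k] = \frac{1}{N \lambda^{k/2}} \sum_{i_1,\ldots,i_k \in [N]} \E\Bigl[\prod_{j=1}^{k} \M_N(i_j,i_{j+1})\Bigr]
\]
with $i_{k+1}:=i_1$, and group the closed walks of length $k$ according to the partition $\pi$ on $[k]$ defined by $s\sim_{\pi} t$ iff $\{i_s,i_{s+1}\}=\{i_t,i_{t+1}\}$ as unordered pairs. Because the entries are Bernoulli, the expectation factorises as $\prod_{V\in\pi} p_{u_V v_V}=\vep_N^{|\pi|}\prod_{V\in\pi} f(w_{u_V},w_{v_V})$, and the number of labellings $(i_1,\ldots,i_k)$ compatible with $\pi$ is $N^{|\gamma\pi|}(1+o(1))$, where $\gamma\pi$ (Definition \ref{pap1-partitiongraph}) records how the vertex positions are identified by the edge identifications.

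\textbf{Step 2: Scale analysis and identification of SS partitions.} Using $N\vep_N\to\lambda$, the contribution of $\pi$ scales as $N^{|\gamma\pi|-1-|\pi|}\lambda^{|\pi|-k/2}$ times a Riemann sum tending to $t(G_{\gamma\pi},f,\mu_w)$. The classical genus bound $|\gamma\pi|\le |\pi|+1$ makes every contribution bounded, and only $\pi$ achieving equality can survive in the limit. Building on the combinatorial analysis of \cite{Bose-Saha-Sen-Sen} (and its reformulation in \cite{pernici2021noncrossing}), the partitions of $[k]$ with $|\gamma\pi|=|\pi|+1$ are precisely $SS(k)$; in particular all their blocks have even size, so odd moments vanish. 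Using \ref{item:A3}, continuity and boundedness of $f$, and dominated convergence, the Riemann sum indeed converges to $t(G_{\gamma\pi},f,\mu_w)$, yielding the stated formula with $l=|\gamma\pi|\in\{2,\ldots,k/2+1\}$.

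\textbf{Step 3: Concentration, uniqueness, weak convergence.} A parallel computation for $\E[(\tr\bA_N^k)^2]$ involves pairs of closed walks. The leading contribution comes from pairs with disjoint vertex supports, which factorise into $(\E[\tr\bA_N^k])^2$, while overlapping pairs lose at least one degree of freedom and contribute $O(1/N)$. Hence $\Var(\tr\bA_N^k)\to 0$, so $\int x^k\,d\ESD(\bA_N)\to m_k(\mu_\lambda)$ in probability. An exponential bound on $|SS(k)|$ together with boundedness of $f$ verifies Carleman's condition for $(m_k(\mu_\lambda))_k$, so these moments uniquely determine a probability measure $\mu_\lambda$, upgrading moment convergence to weak convergence in probability and completing part (a).

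\textbf{Step 4: The limit $\lambda\to\infty$.} Inspecting the moment formula, the exponent $l-1-k/2$ of $\lambda$ is nonpositive and vanishes only when $l=k/2+1$; combined with $|\gamma\pi|=|\pi|+1$ this forces $|\pi|=k/2$, so $\pi$ is a pair partition, and the pair partitions inside $SS(k)$ are exactly $NC_2(k)$. For $\pi\in NC_2(k)$ the graph $G_{\gamma\pi}$ is a rooted planar tree on $k/2+1$ vertices, and the $C_{k/2}=|NC_2(k)|$ surviving terms produce exactly the rooted planar trees indexing \eqref{moments:zhu}. Therefore $m_k(\mu_\lambda)\to m_k(\mu_f)$, and since $\mu_f$ has compact support it is determined by its moments, giving $\mu_\lambda\Rightarrow\mu_f$. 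The main obstacle is Step 2: proving the identity $\{\pi:|\gamma\pi|=|\pi|+1\}=SS(k)$. The inclusion $\supseteq$ can be obtained by an induction on $|\pi|$, but the reverse inclusion relies on the delicate structural characterisation of Special Symmetric partitions via successive elements and sub-block parities from the earlier combinatorial literature.
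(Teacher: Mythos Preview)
Your overall strategy matches the paper's: trace expansion, identification of the contributing partitions as $SS(k)$ via a tree condition, a variance bound for concentration, Carleman's condition for uniqueness, and the $\lambda\to\infty$ analysis isolating $NC_2(k)$ for part (b). Steps 3 and 4 are essentially what the paper does.

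There is, however, a genuine bookkeeping slip in Steps 1--2. You take $\pi$ to be the \emph{edge} partition of the walk and then assert that the formal $\gamma\pi$ from Definition~\ref{pap1-partitiongraph} records the vertex identifications, so that the number of compatible labellings is $N^{|\gamma\pi|}(1+o(1))$. This fails: for the closed walk traversing a triangle twice ($k=6$, edge partition $\pi=\{\{1,4\},\{2,5\},\{3,6\}\}$) one gets $\gamma\pi=\{\{1,3,5\},\{2,4,6\}\}$ with $|\gamma\pi|=2$, whereas the walk visits three distinct vertices. Your claimed identity $\{\pi:|\gamma\pi|=|\pi|+1\}=SS(k)$ also fails as a statement about all partitions of $[k]$: $\pi=\{\{1,3,5\},\{2,4,6\}\}$ satisfies $|\gamma\pi|=3=|\pi|+1$ but has odd blocks, so $\pi\notin SS(6)$. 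The paper sidesteps this by parametrising walks via their \emph{vertex} partition $\sigma$ and setting $\pi:=\gamma^{-1}\sigma$, so that $\gamma\pi=\sigma$ tautologically; the correct scale is then $N^{|\gamma\pi|-|E_{\gamma\pi}|-1}\lambda^{|E_{\gamma\pi}|-k/2}$, where $|E_{\gamma\pi}|$ is the number of distinct edges of the quotient graph $G_{\gamma\pi}$ (which is \emph{not} $|\pi|$ in general), and the tree condition $|\gamma\pi|=|E_{\gamma\pi}|+1$ is what singles out $SS(k)$. One further contrast: the paper proves the implication ``$G_{\gamma\pi}$ is a tree $\Rightarrow\pi\in SS(k)$'' by a self-contained parity argument on blocks and sub-blocks (Properties~1--3 and Proposition~\ref{pap1-simplesymmetrictree}), while you defer both inclusions to \cite{Bose-Saha-Sen-Sen,pernici2021noncrossing}; in fact the direction you label ``delicate'' is exactly the one the paper establishes directly, and the one you propose to do by induction is the one the paper imports from \cite{Bose-Saha-Sen-Sen}.
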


\begin{remark}
  Note that limiting second moment is given by $m_2= t( G_{\gamma\pi}, f, \mu_{w})$ where $\pi=\{1, 2\}$ and $\gamma \pi= \{\{1\}, \{2\}\}$. Hence $G_{\gamma\pi}$ is the graph with $2$ vertices and $1$ edge. Therefore
\[ m_2(\mu_{\lambda})= \int_{(0,\infty]^2} f(x, y) \mu_w(\dd x) \mu_w(\dd y).\]  
\end{remark}

\vskip20pt
\subsubsection{\bf Stieltjes transform: Analytic approach.} It is well-known that $\mu_{\lambda}$ can be characterised by its Stieltjes transform, which, in turn, can be characterised by a random recursive equation. Local weak convergence is a powerful tool for studying the Stieltjes transform of spectral measures associated with sparse random graphs. However, it becomes challenging to provide accurate estimates on the Stieltjes transform to study local laws and extreme values. Therefore, we present an alternative approach to studying the Stieltjes transform of the spectral measure of IER graphs. The ideas used here originate from the works of \cite{KSV2004}.

We denote the upper half complex plane by $$\mathbb C^{+}=\{z\in \C:\, z = \zeta + \iota\eta, \, \eta>0\}.$$ 
For an analytic approach to the problem, we analyse the \textit{resolvent} of this matrix, defined as 
\[
\R_{\bA_N}(z) := (\bA_N - zI)^{-1} , \, z\in \C^{+}.
\]
The Stieltjes transform of the empirical spectral distribution of $\bA_N$ is given by
\[
\St_{\bA_N}(z) = \int_{\mathbb{R}}\frac{1}{x-z}\ESD(\bA_N)(\dd x) = \tr(\R_{\bA_N}(z)),
\]
where $\tr$ denotes the normalized trace. To get more refined estimates we need an additional assumption on the connectivity function: 
\begin{enumerate}[label=\textbf{A.4}]
\item \label{item:A4} $f:[0, \infty)^2\to [0,\infty)$ is symmetric and bounded by a constant $C_f$. Moreover, $f$ is Lipschitz in one coordinate, that is, for all $x_1, x_2, y\in [0, \infty)$, 
$$|f(x_1, y)-f(x_2, y)|\le C_L|x_1-x_2|$$ where $C_L$ is the Lipschitz constant for $f$.
\end{enumerate}

To state the result we will need a Banach space of analytic functions. Consider the space $\Ba$ defined by 
\begin{equation}\label{pap1-Banachdefn}
\Ba = \left\{ \phi:[0,\infty)\times [0,\infty)\to \C \hspace{0.2cm} \text{analytic } \middle| \hspace{0.2cm} \sup_{x,y\geq 0}\frac{|\phi(x,y)|}{\sqrt{1+y}} <\infty  \right\}
\end{equation} 
and take the norm 
\[
\|\phi\|_{\Ba} = \sup_{x,y\geq 0} \frac{|\phi(x,y)|}{\sqrt{1+y}}.
\]
Then, $(\Ba,\|\cdot\|_{\Ba})$ is a Banach space. We defer the proof of this in Proposition \ref{pap1-Banachspace} in the appendix. 

Consider the function $G_N:[0,\infty)\times \C^+$ given by 
\begin{align}\label{pap1-eq:G_Ndefn}
G_N(u,z) := \frac{1}{N}\sum_{i=1}^N \e^{\ota u r_{ii}^N(z)}
\end{align}
where $r_{ii}^N(z)= \R_{\bA_N}(z)(i,i)$,  the $i^{\text{th}}$ diagonal element of the resolvent of $\bA_N$. It turns out that \begin{align*}
    \left.\frac{\partial G_N(u,z)}{\partial u}\right|_{u=0}= \St_{\bA_N}(z)
\end{align*} and hence one can derive a form of the limiting Stieltjes transform.

\begin{theorem}[Analytic functional of the resolvent]\label{pap1-Resolventtheorem}
Let $\bA_N$ be the adjacency of the IER random graph as defined in \eqref{eq:adjacency} and satisfying assumptions \eqref{item:A2}--\eqref{item:A4}.  Further, consider $G_N$ as defined in \eqref{pap1-eq:G_Ndefn}. Define the function $d_f(x)$ as 
\begin{equation}\label{pap1-eq:degree}
    d_f(y) = \int_0^\infty f(x,y) \mu_w(\dd x).
    \end{equation}
Then, for $z\in \C^+$ there exists a function $\phi^*(x,u) := \phi^*_z(x,u)\in \Ba$ such that for each $z\in\C^+$ and uniformly in $u\in (0, 1]$ we have
\begin{equation}\label{eq:limitGN}
\lim_{N\to\infty}\E[G_N(u, z)] =  1 -\sqrt{u}\int_{0}^{\infty}\e^{-\lambda d_f(y)}\int_0^{\infty} \frac{\J(2\sqrt{uv})}{\sqrt{v}}\e^{\ota vz}\e^{\lambda \phi^*(y,v/\lambda)}\dd v\hspace{0.2cm}\mu_w(\dd y)
\end{equation}
and 
$$\Var[ G_N(u, z)]\to 0.$$
Here, $\phi^*:= \phi^*_z$ is a unique analytic solution (in the space $\Ba$) for the fixed point equation:
    \begin{equation}\label{eq:fixedpt}
    \begin{split}
        \phi^*(x,u) &= F_z(\phi^*)(x,u)\\
        &= d_f(x) - \int_0^{\infty}f(x,y)\e^{-\lambda d_f(y)}\left(\Khorunzkyintegral\e^{\ota vz}\e^{\lambda \phi^*\left(y,\frac{v}{\lambda}\right)} \dd{v}\right)\mu_w(\dd{y}),
    \end{split}
    \end{equation}
    where $\J$ is the Bessel function of the first order of the first kind defined as 
    \begin{equation}\label{eq:bessel}
     \J(x)= \frac{x}{2}\sum_{k=0}^\infty \frac{(-1)^k (x^2/4)^k}{k! (k+1)!}.
    \end{equation}
\end{theorem}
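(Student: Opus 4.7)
My strategy adapts the approach of \cite{KSV2004} to the inhomogeneous setting and proceeds in three main steps.

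\emph{Step 1: Schur complement and Bessel linearisation.} Starting from the block matrix inversion formula applied to $\bA_N - zI$ along the $i$-th row/column, I write
\[
r_{ii}^N(z) \;=\; -\frac{1}{z + s_i(z) + \epsilon_i(z)},
\]
where $s_i(z) := \sum_{j\neq i}\bA_N(i,j)^2 R^{(i)}_{jj}(z)$, the resolvent $R^{(i)}$ is that of $\bA_N^{(i)}$ (the matrix obtained by deleting row/column $i$), and the off-diagonal remainder $\epsilon_i(z) := \sum_{j\neq k;\, j,k\neq i}\bA_N(i,j)\bA_N(i,k)R^{(i)}_{jk}(z)$ is $\o{1}$ in probability by a second-moment estimate using \ref{item:A2} (there are $\O{\lambda^2}$ pairs of neighbours in expectation, and the off-diagonal resolvent entries are of size $\O{1/\sqrt{N}}$ in mean square). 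Since $\mathrm{Im}(z),\mathrm{Im}(s_i(z))>0$, the quantity $w:=-\ota(z+s_i(z))$ has positive real part, and termwise integration of the series \eqref{eq:bessel} yields the identity
\[
e^{-u/w} \;=\; 1 - \sqrt{u}\int_0^\infty \frac{\J(2\sqrt{uv})}{\sqrt{v}}\,e^{-wv}\,\dd v,
\]
so that
\[
e^{\ota u r_{ii}^N(z)} \;=\; 1 - \sqrt{u}\int_0^\infty \frac{\J(2\sqrt{uv})}{\sqrt{v}}\,e^{\ota vz}\,e^{\ota v s_i(z)}\,\dd v.
\]
This linearises the nonlinear dependence of the integrand on $s_i(z)$, which is the key technical device of \cite{KSV2004}.

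\emph{Step 2: Self-consistency and identification of $\phi^*$.} Conditioning on $w_i=x$ and on $\bA_N^{(i)}$, the edges $\{\bA_N(i,j)\}_{j\neq i}$ are independent Bernoulli$(p_{ij})$ variables and $\bA_N(i,j)^2 = \bA_N(i,j)/\lambda$, so
\[
\E\bigl[e^{\ota v s_i(z)}\,\big|\,w_i=x,\bA_N^{(i)}\bigr] \;=\; \prod_{j\neq i}\!\bigl[1+\vep_N f(x,w_j)\bigl(e^{\ota (v/\lambda)R^{(i)}_{jj}(z)}-1\bigr)\bigr].
\]
Taking logarithms, invoking $N\vep_N\to\lambda$, \ref{item:A3}, and the rank-one perturbation bound that replaces $R^{(i)}_{jj}(z)$ by $r_{jj}^N(z)$ at vanishing cost, the product converges to $\exp\!\bigl(-\lambda d_f(x)+\lambda\int_0^\infty f(x,y)h(y,v/\lambda)\mu_w(\dd y)\bigr)$, where $h(y,u):=\lim_N\E[e^{\ota u r_{jj}^N(z)}\mid w_j=y]$. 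Setting $\phi^*(x,u):=\int_0^\infty f(x,y)h(y,u)\mu_w(\dd y)$ and substituting the conditional expectation into the Bessel representation of Step~1 yields $h(x,u) = 1 - \sqrt{u}\,e^{-\lambda d_f(x)}\int_0^\infty \J(2\sqrt{uv})v^{-1/2}e^{\ota vz}e^{\lambda\phi^*(x,v/\lambda)}\dd v$. Averaging over $x\sim\mu_w$ gives \eqref{eq:limitGN}, while multiplying by $f(x,\cdot)$ and integrating against $\mu_w$ gives precisely \eqref{eq:fixedpt}.

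\emph{Step 3: Fixed point in $\Ba$, variance, and main obstacle.} To establish existence and uniqueness of $\phi^*\in\Ba$ solving $F_z(\phi)=\phi$, I would show that $F_z$ preserves an invariant closed ball of $\Ba$ and acts as a contraction on it for $\mathrm{Im}(z)$ sufficiently large, using $|e^{\ota vz}|=e^{-v\,\mathrm{Im}(z)}$, the estimate $|\J(2\sqrt{uv})/\sqrt{v}|\le C\sqrt{u}(1\wedge v^{-3/4})$, and the hypotheses \ref{item:A1}, \ref{item:A4}; the fixed point then extends holomorphically to all of $\C^+$ through a normal-families argument (the family $\{\phi_z^*\}$ is uniformly bounded on compact subsets and $F_z$ depends analytically on $z$). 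The variance statement $\Var[G_N(u,z)]\to 0$ follows from a standard row-exposure martingale decomposition with respect to the filtration generated by the rows of $\bA_N$: the rank-one perturbation identity gives that replacing a single row perturbs each diagonal resolvent entry by $\O{1/(N\mathrm{Im}(z)^2)}$, so summing squared martingale differences produces $\Var[G_N(u,z)]=\O{1/N}$. The principal obstacle is the uniform-in-$v$ control needed in Step~2 when replacing $R^{(i)}_{jj}(z)$ by its limit: the Bessel kernel $\J(2\sqrt{uv})/\sqrt{v}$ decays only polynomially in $v$, so an integrable envelope for $|\phi^*(y,v/\lambda)|$ as $v\to\infty$ is indispensable; this is exactly why $\Ba$ is weighted by $1/\sqrt{1+y}$, and where hypothesis \ref{item:A4} enters to propagate regularity in the first argument through the iteration $F_z^n$.
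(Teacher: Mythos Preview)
Your overall architecture is the paper's: the Schur--Bessel linearisation in Step~1 and the conditional product expansion in Step~2 are precisely Proposition~\ref{pap1-analyticstep1}, and the contraction argument for $F_z$ on $\Ba$ for large $\eta$ followed by analytic continuation is how the paper obtains the fixed point.

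There is, however, a real gap in Step~2. After conditioning on $\bA_N^{(i)}$, the product you write is (up to the rank-one replacement) $\exp\bigl(-\lambda d_j+\lambda\,g_N(x,v/\lambda,z)\bigr)$ with $g_N(x,u,z)=\tfrac{1}{N}\sum_j f(x,w_j)e^{\ota u r_{jj}^N}$, a \emph{random} quantity. Your self-consistency equation therefore involves $\E[e^{\lambda g_N}]$, not $e^{\lambda\E[g_N]}$, and your passage to the deterministic limit $\exp(-\lambda d_f(x)+\lambda\int f(x,y)h(y,v/\lambda)\mu_w(\dd y))$ tacitly assumes that $g_N$ concentrates around its mean. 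Assumption~\ref{item:A3} handles only the weights $w_j$, not the correlated resolvent entries $r_{jj}^N$. The paper proves this concentration separately (Proposition~\ref{pap1-Banachconcentration}), by computing the covariances $A_{k,l}=\E[e^{\ota u r_{kk}^N}e^{\ota u r_{ll}^N}]-\E[e^{\ota u r_{kk}^N}]\E[e^{\ota u r_{ll}^N}]$ via the Bessel representation applied twice, \emph{before} identifying $\E[g_N]$ with the fixed point; without this step the recursion for $\phi^*$ is not established.

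On the variance, your specific claim is false: a rank-two perturbation does not move each diagonal resolvent entry by $O(1/(N\eta^2))$; an individual $r_{jj}$ can shift by $O(1/\eta)$. What \emph{is} true, via the exact identity $R'-R=-R'\Delta R$ and Ward's identity, is that $\tfrac{1}{N}\sum_j|r_{jj}-r'_{jj}|$ is small, and an entry-wise Efron--Stein argument (resampling one $a_{ij}$ at a time and using $\sum_k|R_{ki}||R_{kj}|\le\eta^{-2}$) does yield $\Var[G_N]=O(1/N)$. This is a legitimate alternative to the paper's double-Bessel covariance computation, but it needs to be stated correctly. Finally, the role you assign to \ref{item:A4} is off: the Lipschitz hypothesis is used to convert empirical averages $\tfrac{1}{N}\sum_j f(x,w_j)(\cdots)$ into integrals against $\mu_w$ via the bounded-Lipschitz metric (Lemma~\ref{pap1-gyLipschitz} and equation~\eqref{pap1-eq:lipschitzerrordf}), not to propagate regularity through iterates of $F_z$.
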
 
Observe that there is a slight difference in the right-hand sides of \eqref{eq:limitGN} and \eqref{eq:fixedpt} but in the case $f=1$ both are the same. The next corollary describes the convergence of the Stieltjes transform. 
\begin{corollary}[Identification of the Stieltjes Transform]\label{pap1-Stieltjescorollary} Under the assumptions of the above theorem, we have that any $z\in \C^{+}$, 
$$\St_{\bA_N}(z)\to\St_{\mu_{\lambda}}(z) \text{ in probability},$$
where $\mu_{\lambda}$ is as in Theorem \ref{pap1-momenttheorem}.
    The $\St_{\mu_{\lambda}}(\cdot)$ satisfies the following equation:
    
    \begin{equation}\label{eq:stmain}
    \St_{\mu_{\lambda}}(z) = \ota\int_0^{\infty}\e^{-\lambda d_f(y)} \int_0^{\infty}\e^{\ota vz}\e^{\lambda \phi_z^*(y,\frac{v}{\lambda})}\dd v\hspace{0.2cm}\mu_w(\dd y),
    \,\, z\in \C^{+}.
    \end{equation}
\end{corollary}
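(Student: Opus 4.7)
The plan is to extract Corollary \ref{pap1-Stieltjescorollary} directly from Theorem \ref{pap1-Resolventtheorem} by differentiating \eqref{eq:limitGN} in $u$ at $u=0$. The starting identity is
\[
\frac{\partial G_N(u,z)}{\partial u}\bigg|_{u=0} \;=\; \frac{\ota}{N}\sum_{i=1}^N r_{ii}^N(z) \;=\; \ota\,\St_{\bA_N}(z),
\]
so that $\St_{\bA_N}(z)$ is recovered as $-\ota$ times the derivative at the origin of a holomorphic functional whose mean and variance are already controlled by Theorem \ref{pap1-Resolventtheorem}.

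The first step I would take is to upgrade the convergence in Theorem \ref{pap1-Resolventtheorem} from real $u\in(0,1]$ to a complex neighborhood of $0$. For $z=\zeta+\ota\eta\in\C^+$, each diagonal resolvent entry satisfies $|r_{ii}^N(z)|\leq 1/\eta$, so $u\mapsto G_N(u,z)$ extends to an entire function of $u\in\C$ with the uniform-in-$N$ bound $|G_N(u,z)|\leq \e^{|\Im u|/\eta}$. Combined with the pointwise convergence of $\E[G_N(\cdot,z)]$ on $(0,1]$, Vitali's convergence theorem yields uniform convergence of $\E[G_N(\cdot,z)]$ on compact subsets of $\C$ to an entire limit $G_\infty(\cdot,z)$, and Cauchy's formula for the first derivative then gives $\partial_u\E[G_N(0,z)]\to\partial_u G_\infty(0,z)$.

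Using the series $\J(x)=\tfrac{x}{2}\sum_{k\geq 0}(-1)^k(x^2/4)^k/(k!(k+1)!)$, one checks that $\sqrt{u}\,\J(2\sqrt{uv})/\sqrt{v}=u+O(u^2v)$, so that its $u$-derivative at $0$ equals $1$. Differentiating the right-hand side of \eqref{eq:limitGN} under the double integral (justified by dominating the integrand for $u$ in a small complex neighborhood of $0$ by $C(1+v)\e^{-\eta v}$, using $|\e^{\ota vz}|=\e^{-\eta v}$ and the bound $\Re\lambda\phi^*(y,v/\lambda)\leq \lambda d_f(y)$ deduced from the fixed-point equation \eqref{eq:fixedpt}) produces
\[
\partial_u G_\infty(0,z) \;=\; -\int_0^\infty \e^{-\lambda d_f(y)}\int_0^\infty \e^{\ota vz}\,\e^{\lambda \phi_z^*(y,v/\lambda)}\,\dd v\,\mu_w(\dd y).
\]
Multiplying by $-\ota$ and using $-1/\ota=\ota$ yields exactly \eqref{eq:stmain} as the limit of $\E[\St_{\bA_N}(z)]$.

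To promote mean convergence to convergence in probability, it suffices to show $\Var[\St_{\bA_N}(z)]\to 0$. Using the Taylor expansion $G_N(u,z)=1+\ota u\,\St_{\bA_N}(z)+R_N(u,z)$ with $|R_N(u,z)|\leq Cu^2/\eta^2$ for $u\in(0,\eta/2]$ (obtained from $|r_{ii}^N(z)|\leq 1/\eta$), the triangle inequality in $L^2$ gives
\[
\sqrt{\Var[\St_{\bA_N}(z)]}\;\leq\; \tfrac{1}{u}\sqrt{\Var[G_N(u,z)]}+\tfrac{Cu}{\eta^2}.
\]
Sending $N\to\infty$ with $u$ fixed and invoking $\Var[G_N(u,z)]\to 0$ from Theorem \ref{pap1-Resolventtheorem} yields $\limsup_N\sqrt{\Var[\St_{\bA_N}(z)]}\leq Cu/\eta^2$, and then sending $u\to 0$ gives the claim. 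The main obstacle I foresee is the complex-analytic upgrade of the mean convergence via Vitali, since Theorem \ref{pap1-Resolventtheorem} provides convergence of $\E[G_N(u,z)]$ only for real $u\in(0,1]$; once this holomorphic extension is granted, the explicit derivative computation and the variance control are routine.
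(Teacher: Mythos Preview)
Your approach is correct and parallels the paper's proof in its overall shape but diverges in two places worth noting.

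For the interchange of $N\to\infty$ and $\partial_u|_{u=0}$, you use a Vitali/normal-families argument: extend $u\mapsto\E[G_N(u,z)]$ to an entire function with the locally uniform bound coming from $|r_{ii}^N(z)|\le 1/\eta$ (your bound should read $\e^{|u|/\eta}$ rather than $\e^{|\Im u|/\eta}$, but local boundedness is all that is needed), then conclude convergence of derivatives via Cauchy's integral formula. The paper instead uses a real-variable Moore--Osgood argument (Proposition \ref{pap1-MooreOsgoodstep}, based on \cite[Theorem 7.11]{babyrudin}): the difference quotients $H_N(u,z)$ converge uniformly in $u\in(0,1]$ to $H(u,z)$ and pointwise in $N$ to $\E[\tr\R_{\bA_N}(z)]$, whence the double limit exists and equals either iterated limit. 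Both routes lead to the same explicit computation of $\partial_u G_\infty(0,z)$ via the Bessel series; the paper carries this out by writing $K(u)/u=I_0(y)+\sum_{k\ge 1}a_k(u)$ and using dominated convergence. One caveat: your domination $\Re\lambda\phi^*(y,v/\lambda)\le\lambda d_f(y)$ is true (it is Claim \ref{pap1-claimbound} in the paper) but is not an immediate consequence of the fixed-point equation \eqref{eq:fixedpt}; the paper obtains it from the approximation by $g_N$. For the present corollary you only need the weaker bound $\|\phi^*\|_{\Ba}\le C_f$ from \eqref{eq:phibdd}, which already makes the $v$-integral absolutely convergent.

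For convergence in probability, your argument is genuinely different and more self-contained: you control $\Var[\St_{\bA_N}(z)]$ directly from $\Var[G_N(u,z)]\to 0$ via the second-order Taylor remainder, then let $u\downarrow 0$. The paper instead does not prove $\Var[\St_{\bA_N}(z)]\to 0$ at this stage; it simply invokes Theorem \ref{pap1-momenttheorem} (the moment-method proof that $\ESD(\bA_N)\Rightarrow\mu_\lambda$ in probability) together with Lemma \ref{stieltjesconvergencefact}. Your route avoids relying on the separate combinatorial argument of Section \ref{pap1-Momentsection}, at the cost of a small extra estimate; the paper's route is shorter here because that work has already been done.
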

To recover the dense regime, we study the asymptotic $\lambda\to\infty$ as in the next corollary.

\begin{corollary}[Stieltjes Transform as $\lambda\to\infty$]\label{pap1-Stieltjeslargelambda}
    For $\lambda\to\infty$, we have that 
    \begin{align}
        \lim_{\lambda\to\infty}\St_{\mu_{\lambda}}(z) = \St_{\mu_f}(z)
    \end{align}
    for each $z\in\C^+$, where $\St_{\mu_f}(z)$ satisfies an integral equation given by
    \begin{align}
        \St_{\mu_f}(z) := \int_0^{\infty} \mathcal{H}(z,x)\mu_w(\dd{x}),
    \end{align}
where $\mathcal H(z,x)$ satisfies the $f$ dependent fixed point equation \eqref{eq:hzx}.
\end{corollary}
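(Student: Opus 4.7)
The plan is to pass the limit $\lambda\to\infty$ through the integral representation of $\St_{\mu_\lambda}(z)$ given in Corollary \ref{pap1-Stieltjescorollary}, by Taylor-expanding the fixed point $\phi^*_z$ around $u = 0$. I would set
$$\mathcal{H}_\lambda(z, x) := \ota\,\e^{-\lambda d_f(x)}\int_0^\infty \e^{\ota v z}\,\e^{\lambda \phi^*_z(x, v/\lambda)}\,\dd v,$$
so that $\St_{\mu_\lambda}(z) = \int_0^\infty \mathcal{H}_\lambda(z, x)\,\mu_w(\dd x)$. The pivotal observation is that $\phi^*_z(x, 0) = d_f(x)$: setting $u = 0$ in \eqref{eq:fixedpt} annihilates the Bessel-type integral, since $\sqrt u\,\J(2\sqrt{uv})/\sqrt v = u + O(u^2)$ as $u\to 0$. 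This lets me rewrite the inner exponent as $\lambda[\phi^*_z(x, v/\lambda) - d_f(x)]$, which vanishes pointwise in $v$ but has a nontrivial linearisation in $\lambda^{-1}$.

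Next, differentiating \eqref{eq:fixedpt} at $u = 0$ and using the same expansion of $\J$, I obtain
$$\partial_u \phi^*_z(x, 0) \;=\; -\int_0^\infty f(x, y)\,\e^{-\lambda d_f(y)}\int_0^\infty \e^{\ota v z}\,\e^{\lambda\phi^*_z(y, v/\lambda)}\,\dd v\,\mu_w(\dd y) \;=\; \ota\int_0^\infty f(x, y)\,\mathcal{H}_\lambda(z, y)\,\mu_w(\dd y).$$
For fixed $v > 0$ and $\lambda\to\infty$, Taylor expansion gives $\lambda[\phi^*_z(x, v/\lambda) - d_f(x)] = v\,\partial_u\phi^*_z(x, 0) + O(\lambda^{-1})$. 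Assuming uniform-in-$\lambda$ control guaranteeing $\mathrm{Re}\bigl(\ota z + \partial_u \phi^*_z(x, 0)\bigr) < 0$, dominated convergence yields
$$\mathcal{H}_\lambda(z, x) \;\longrightarrow\; \ota\int_0^\infty \e^{v[\ota z + g(z, x)]}\,\dd v \;=\; \frac{-\ota}{\ota z + g(z, x)} \;=:\; \mathcal{H}(z, x),$$
where $g(z, x) := \lim_\lambda \partial_u\phi^*_z(x, 0) = \ota\int_0^\infty f(x, y)\mathcal{H}(z, y)\,\mu_w(\dd y)$. Back-substituting $g$ shows that $\mathcal{H}(z, x)$ satisfies the fixed point equation \eqref{eq:hzx} (up to a sign convention), and a second dominated-convergence step delivers $\St_{\mu_\lambda}(z) \to \int_0^\infty \mathcal{H}(z, x)\,\mu_w(\dd x) = \St_{\mu_f}(z)$.

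The main technical obstacle is the rigorous justification of both dominated-convergence steps, each of which demands uniform-in-$\lambda$ control of $\phi^*_z$. Concretely, I would need: (i) the uniform bound $|\phi^*_z(x, u)| \le C(1 + u)^{1/2}$, immediate from $\phi^*_z \in \Ba$ via Theorem \ref{pap1-Resolventtheorem}; (ii) a uniform lower bound on $-\mathrm{Re}[\ota z + \partial_u\phi^*_z(x, 0)]$ producing an integrable dominating function in $v$, obtainable by exploiting that $\mathcal{H}_\lambda(z, \cdot)$ is a Stieltjes-type transform with $|\mathcal{H}_\lambda(z, \cdot)| \le 1/\mathrm{Im}(z)$ on $\C^+$; and (iii) Lipschitz continuity in $u$ of $\phi^*_z(x, u)$ near $0$, uniform in $\lambda$ and $x$, to validate the Taylor step, which can be extracted from a refinement of the contraction-mapping argument underlying Theorem \ref{pap1-Resolventtheorem} combined with Assumption \ref{item:A4}.
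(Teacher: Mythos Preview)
Your strategy matches the paper's almost exactly: define $\mathcal H_\lambda(z,x)$, expand $\phi^*_z$ around $u=0$ so that the exponent becomes $v\,\partial_u\phi^*_z(x,0)+o(1)$, pass to the limit via dominated convergence, and recognise the limiting $\mathcal H$ as satisfying \eqref{eq:hzx}. The paper carries out the expansion by writing out the Bessel series in \eqref{eq:fixedpt} and isolating the $k=0$ term (your $\partial_u\phi^*_z(x,0)$) from the tail $T(x,u,\lambda,z)$, which is equivalent to your Taylor step.

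The genuine gap in your proposal is the justification of the dominated convergence steps. Your items (i)--(iii) do not supply the needed dominating function. To apply DCT to $v\mapsto \e^{\ota vz}\,\e^{-\lambda d_f(x)+\lambda\phi^*_z(x,v/\lambda)}$ you need a $\lambda$--uniform bound on the modulus of the integrand, i.e.\ on $\exp\bigl(\Re[\lambda\phi^*_z(x,v/\lambda)]-\lambda d_f(x)\bigr)$. Your bound (i), $|\phi^*_z(x,u)|\le C_f\sqrt{1+u}$, only gives $\Re[\lambda\phi^*_z(x,v/\lambda)]\le \lambda C_f\sqrt{1+v/\lambda}$, which blows up with $\lambda$ and is useless here. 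Your bound (ii), $|\mathcal H_\lambda(z,\cdot)|\le 1/\eta$, is exactly what you are trying to prove (it follows \emph{from} the dominating bound, not the other way round), so invoking it is circular. And (iii), uniform Lipschitz control of $u\mapsto\phi^*_z(x,u)$, gives you at best $\Re[\lambda(\phi^*_z(x,v/\lambda)-d_f(x))]\le Cv$ with $C$ not obviously negative, which again does not dominate.

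The paper closes this gap with a different idea: it proves directly that $|\phi^*_z(x,u)|\le d_f(x)$ for every $u\ge 0$ (this is the content of Claim~4.6), so that $|\e^{-\lambda d_f(x)+\lambda\phi^*_z(x,u)}|\le 1$ identically in $\lambda$. This sharp inequality is \emph{not} visible from the fixed-point equation or the Banach-space bound $\|\phi^*_z\|_{\Ba}\le C_f$; the paper obtains it by going back to the construction $\phi^*_z=\lim_N\E[g_N]$ and noting that $|g_N(x,u,z)|\le N^{-1}\sum_k f(x,w_k)\to d_f(x)$ because $|\e^{\ota u r_{kk}^N}|\le 1$. Once this bound is in hand, both DCT steps are immediate with dominating function $\e^{-\eta v}$, and the rest of your argument goes through verbatim.
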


\begin{remark}[$\phi^*_z$ and the Stieltjes Transform in the homogeneous setting]\label{remark:st:homogeneous} In the case when $f\equiv 1$, we recover the homogeneous setting. We know $\phi^*_z$ satisfies the fixed point equation \eqref{eq:fixedpt}. If we substitute $f=1$ in \eqref{eq:fixedpt} we get
    \[
    \phi^*(x,u) = 1- \sqrt{u}\int_0^{\infty}\e^{-\lambda }\left(\Khorunzkyintegral\e^{\ota vz}\e^{\lambda \phi^*\left(y,\frac{v}{\lambda}\right)} \dd{v}\right)\mu_w(\dd y).
    \]
    We see that the right-hand side has no dependency on the parameter $x$, and so, we have a unique analytical functional $\widetilde{\phi^*}(u) = \phi^*(x,u)$ that satisfies the fixed point equation
    \begin{equation}\label{pap1-eq:homogeneousFP}
    \widetilde{\phi^*}(u) = 1 - \e^{-\lambda}\Khorunzkyintegral\e^{\ota vz}\e^{\lambda\widetilde{\phi^*}(v/\lambda)}\dd{v}.
    \end{equation}
    This matches the result of \cite{KSV2004}. 

    From Example 2 of \cite{bordenave:lelarge:2010}, we have that  $\widetilde{\phi^*_z}$ has the form $\widetilde{\phi^*_z}(u) = \E[\e^{\ota u X(z)}]$ for each $z\in\C^+$, where $X(z)$ has the law $Q$ as described in \eqref{eq:bordenavelwc}. So, 
    \begin{align*}
    \St_{\mu_{\lambda}}(z) = \ota\int_0^{\infty}\e^{\ota vz}e^{-\lambda+\lambda\E\left[\e^{\ota \frac{v}{\lambda} X(z)}\right] )}\dd{v} = \ota\int_0^{\infty}\e^{\ota vz} \varphi_P \left(  \E\left[\e^{\ota \frac{v}{\lambda} X(z)}\right]  \right) \dd v\hspace{0.2cm},
    \,\, z \in \C^{+},
    \end{align*}
    where 
    \begin{align*}
    \varphi_P(z) = \E[z^P]= \e^{\lambda(z-1)}, \,\, , P\sim \mathrm{Poi}(\lambda).
    \end{align*}

    \end{remark}

\subsection{\bf Examples}
 We now list out a few examples of the model that can be approached by our methods. 
\paragraph{\bf Example 1: Homogeneous Erd\H{o}s-R\'enyi Random Graph.}
     When we have $f\equiv 1$, the model reduces to the standard homogeneous Erd\H{o}s-R\'enyi graph with edge probability $p = \lambda/N$. As discussed, in this case the moments of $\mu_{\lambda}$ can be computed. In particular, we have $t(G_{\gamma\pi}, f, \mu_w)=1$ for all $\pi$. Hence we have
$$m_{2k}(\mu_{\lambda})= \sum_{l=1}^{k}\lambda^{l-k} |\{ \pi\in SS(2k):  |\pi|=l\}|= |NC_2(2k)|+ \sum_{l=1}^{k-1}\lambda^{l-k}|\{ \pi\in SS(2k): |\pi|=l\}|$$
Since the (even) moments of the semicircle law are given by the Catalan numbers, it is immediate that 
$$\lim_{\lambda\to\infty} m_{2k}(\mu_{\lambda})= m_{2k}(\mu_s).$$
Hence Theorem \ref{pap1-momenttheorem}(b) is true in this special case. It is known that $\mu_{\lambda}$ has an absolutely continuous spectrum when $\lambda>1$ (see \cite{bordenave:virag:sen, arras2021existence}). In this case, the Stieltjes transform is given by 
\[
\St_{\mu_{\lambda}}(z)= -\ota \int_0^\infty\e^{\ota v z}\e^{-\lambda+\lambda \widetilde{\phi^*}(v/\lambda)} \dd{v},
\]
and $\widetilde{\phi^*}(v/\lambda)$ satisfies the equation \eqref{pap1-eq:homogeneousFP}. What is interesting and cannot be immediately derived from our results is the rate of convergence of the measure $\mu_{\lambda}$ to $\mu_s$ as $\lambda$ becomes large. In the simulation below we consider the $\lambda=10$ and the simulation already suggests the appearance of semicircle law. We believe the representation above of the Stieltjes transform as in Corollary \ref{pap1-Stieltjescorollary} can be used to prove the rate of convergence as done in the classical Wigner case in \cite{bai2008methodologies}.

\begin{figure}
     \centering
     \begin{subfigure}[b]{0.48\textwidth}
         \centering
         \includegraphics[width=\textwidth]{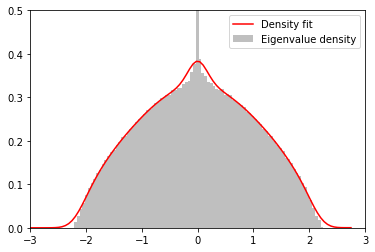}
         \caption{$\lambda=5$.}
         \label{pap1-fig:ERlam5}
     \end{subfigure}
     \hfill
     \begin{subfigure}[b]{0.48\textwidth}
         \centering
         \includegraphics[width=\textwidth]{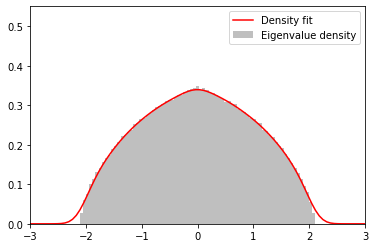}
         \caption{$\lambda=10$.}
         \label{pap1-fig:ERlam10}
     \end{subfigure}
     \hfill
        \caption{The homogeneous Erd\H{o}s-R\'{e}nyi Random Graph on 10,000 vertices.}
        \label{pap1-fig:ERRG}
\end{figure}


\paragraph{\bf Example 2: Chung-Lu Random Graph:} Let $(d_i)_{i\in [n]}$ be a graphical sequence and denote by $m_1 = \sum_i d_i$ and $m_{\infty} = \max_i d_i$, the total and the maximum degree, respectively. Let $f$ be defined on $[0,1]^2$ as 
    \[
    f(x,y) = xy\wedge 1
    \]
    and 
    \[
    w_i = \frac{d_i}{m_{\infty}}, \hspace{0.3cm} \varepsilon_N = \frac{m_{\infty}^2}{m_1}.
    \]
We can choose an appropriate degree sequence $(d_i)_{i\ge 1}$ such that $m_{\infty} = \lito(\sqrt{m_1})$ and  $N\varepsilon_N \to \lambda$. The connection probabilities will be given by 
    \[
    p_{ij}^{\mathrm{cl}} = \varepsilon_N\left(  \frac{d_id_j}{m_{\infty}^2} \wedge 1\right) = \frac{d_id_j}{m_1}.\]

    Let $o_N$ be a uniformly chosen vertex and $d_{o_N}$ be the degree of this vertex. We assume that $$\frac{d_{o_N}}{m_{\infty}}\overset{d}\to  W$$
    \emph{where $W$ has law $\mu_w$ which is compactly supported.} Then the conditions of Theorem \ref{pap1-momenttheorem} are satisfied. Hence there exists a limiting spectral distribution which we call $\mu_{CL, \lambda}$ and the even moments can identified in the following way.

Let $SS_\ell(2k)$ be the set of Special Symmetric partitions with $l$ blocks. Then, 
\begin{align*}
\int_{\mathbb R} x^{2k} \mu_{CL,\lambda}(\dd{x})&= \sum_{\ell=1}^{k} \sum_{\pi\in SS_{\ell}(2k)} \lambda^{\ell-k} t(G_{\gamma\pi}, f, \mu_w)\\
&=\sum_{\ell=1}^{2k}\sum_{\pi\in SS_{\ell}(2k)} \lambda^{\ell-k} \prod_{j=1}^{\#(\gamma \pi)}\int_{\mathbb R} x^{b_j(\gamma\pi)} \mu_w(\dd{x}),
\end{align*}
 where $b_1(\sigma),\cdots, b_{\#\sigma}(\sigma)$ denotes the size of the blocks of a partition $\sigma$. 
For $\sigma\in NC_2(k)$, its \emph{Kreweras complement} $K(\sigma)$ 
is the maximal non-crossing partition $\bar\sigma$ of $\{\bar 1,\ldots,\bar k\}$, such that 
$\sigma\cup\bar\sigma$ is a non-crossing partition of $\{\bar 1,1, \ldots,\bar k, k\}$. For example,
\begin{eqnarray*}
K\left(\{\{1,2\},\{3,4\},\{5,6\}\}\right)&=&\{\{1\},\{2,4,6\},\{3\},\{5\}\},\\
K\left(\{ (\{1, 2\}, \{3, 6\}, \{4, 5\},\{7,8\}\}\right)&=& \{ \{1, 3, 7\}, \{4, 6\}, \{2\}, \{5\}, \{8\}\}.
\end{eqnarray*}
Note that this slightly differs from the standard notation of Kreweras complement in \cite{nica2006lectures} but for pairings, the $\pi$ and $\pi^{-1}$ coincide. It follows easily that when $\pi\in NC_2(2k)$, $\gamma \pi$ can be replaced by $K(\pi)$. The benefit of this representation is the following. It follows from \cite[Page 228]{nica2006lectures} that
$$\int_{\mathbb R} x^{2k} (\mu_{w}\boxtimes \mu_s)(\dd{x})= \sum_{\pi\in NC_2(2k)} \prod_{j=1}^{k+1} \int_{\mathbb R} x^{b_j(K(\pi))} \mu_w(\dd{x}),$$
where $\mu_w\boxtimes \mu_s$ is the free multiplicative convolution of the measures $\mu_w$ and semicircle law $\mu_s$.
Hence the moments of $\mu_{CL, \lambda}$ can be written as
$$\int_{\mathbb R} x^{2k} \mu_{CL,\lambda}(\dd{x})=\int_{\mathbb R} x^{2k} (\mu_{w}\boxtimes \mu_s)(\dd{x})+\sum_{\ell=1}^{k-1}\sum_{\pi\in SS_{\ell}(2k)} \lambda^{\ell-k} \prod_{j=1}^{\#(\gamma \pi)}\int_{\mathbb R} x^{b_j(\gamma\pi)} \mu_w(\dd{x}).$$

This also shows that 
$$\lim_{\lambda\to\infty}\int_{\mathbb R} x^{2k} \mu_{CL,\lambda}(\dd{x})=\int_{\mathbb R} x^{2k} (\mu_{w}\boxtimes \mu_s)(\dd{x}),$$ 
and consequently, $\mu_f$ is of the form $\mu_{w}\boxtimes \mu_s$.

\begin{remark}
    We want to add a remark about heavy-tailed degrees. Our conditions are not satisfied when the degree sequence follows a power-law distribution. In that case, the $w_i$ need to be scaled differently, and the limiting $W$ will not have a compact support. For further discussion on inhomogeneous random graphs with heavy tails, we refer to \cite[Chapter 6]{remcovol1}.
\end{remark}

\paragraph{\bf Example 3: Generalized random graph:}  
Again let $(d_i)$ be as above. Let $f(x,y)= \frac{xy}{1+xy}$ and $w_i = d_1/\sqrt{m_1}$. Then, 
    \[
    p_{ij}^{\mathrm{grg}} =\frac{d_id_j}{m_1 + d_id_j}.
    \]
Although the above example does not directly fall in our set-up (due to lack of $\vep_N$), one can still derive the limiting spectral distribution using the Chung-Lu model. We will use the following two facts. The first is \cite[Corollary A.41]{bai2008methodologies}, and is also a corollary of the Hoffman-Wielandt inequality.

\begin{fact}\label{fact:HW}
If $d_L$ denotes the L\'evy distance between two probability measures, then for $N\times N$ 
symmetric matrices $A$ and $B$,
\[
d_L^3\left(\ESD(A),\ESD(B)\right)\le\frac1N\Tr\left((A-B)^2\right)\,.
\]
\end{fact}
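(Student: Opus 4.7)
The plan is to derive this inequality from the Hoffman--Wielandt inequality, which controls the $\ell^2$ difference of sorted eigenvalues, and then translate that bound into a statement about the L\'evy distance between empirical distribution functions via a counting argument.

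First, I would apply the Hoffman--Wielandt inequality: for symmetric $N\times N$ matrices $A$ and $B$ with eigenvalues listed in increasing order $\lambda_1(A)\le\cdots\le\lambda_N(A)$ and $\lambda_1(B)\le\cdots\le\lambda_N(B)$, one has
\[
\sum_{i=1}^N \bigl(\lambda_i(A)-\lambda_i(B)\bigr)^2 \le \Tr\bigl((A-B)^2\bigr).
\]
This is the classical result I would cite directly; proving it would require Birkhoff's theorem on doubly stochastic matrices, and would be the only step where real work is needed. From this, by Markov's inequality, for any $\varepsilon>0$ the number of indices $i$ with $|\lambda_i(A)-\lambda_i(B)|>\varepsilon$ is at most $M_\varepsilon:=\Tr((A-B)^2)/\varepsilon^2$.

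Next, I would convert this into comparisons of the empirical CDFs $F_A$ and $F_B$ of $\ESD(A)$ and $\ESD(B)$. Fix $x\in\mathbb R$ and call an index $i$ \emph{good} if $|\lambda_i(A)-\lambda_i(B)|\le\varepsilon$. For any good index with $\lambda_i(B)\le x$, one has $\lambda_i(A)\le x+\varepsilon$. Since the number of bad indices is at most $M_\varepsilon$, this yields
\[
NF_A(x+\varepsilon)\ge NF_B(x)-M_\varepsilon,
\]
and symmetrically $NF_B(x+\varepsilon)\ge NF_A(x)-M_\varepsilon$. Equivalently,
\[
F_A(x-\varepsilon)-M_\varepsilon/N \le F_B(x) \le F_A(x+\varepsilon)+M_\varepsilon/N.
\]

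Finally, I would optimize over $\varepsilon$. If we pick $\varepsilon$ so that $M_\varepsilon/N\le\varepsilon$, that is $\varepsilon^3\ge\Tr((A-B)^2)/N$, then the inequalities above are precisely those defining the L\'evy distance, and hence $d_L(F_A,F_B)\le\varepsilon$. Taking the infimum over all admissible $\varepsilon$ yields
\[
d_L^3\bigl(\ESD(A),\ESD(B)\bigr)\le \tfrac{1}{N}\Tr\bigl((A-B)^2\bigr),
\]
as required. The main obstacle is the Hoffman--Wielandt inequality itself, which is non-trivial but standard; the rest of the argument is a clean quantitative conversion between the $\ell^2$ norm of the ordered eigenvalue gap sequence and the L\'evy metric.
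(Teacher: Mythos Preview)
Your argument is correct and is exactly the standard derivation the paper alludes to: the paper does not prove this fact but cites it as \cite[Corollary~A.41]{bai2008methodologies} and remarks that it is a corollary of the Hoffman--Wielandt inequality. Your route via Hoffman--Wielandt, the Markov-type count of large eigenvalue gaps, and the optimization in $\varepsilon$ is precisely how that corollary is obtained.
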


The following is a fact about the coupling of two Bernoulli random variables with parameters $p$
and $q$ (see \citet[Theorem 2.9]{remcovol2})
\begin{fact} \label{fact:coupling}
There exits a coupling between $X\sim \mathrm{Ber}(p)$ and $Y\sim \mathrm{Ber}(q)$ such that
\[
    \prob(X\neq Y) = |p-q|. 
    \]
\end{fact}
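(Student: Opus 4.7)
The plan is to construct an explicit coupling via a single shared uniform random variable, which is the standard route for comparing two Bernoulli laws. Without loss of generality I would assume $p\le q$ (otherwise just relabel, since $|p-q|=|q-p|$), and then build $X$ and $Y$ as thresholded versions of the same uniform.

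Concretely, on a suitable probability space, let $U$ be uniformly distributed on $[0,1]$ and set
\[
X := \mathbf{1}\{U \le p\}, \qquad Y := \mathbf{1}\{U \le q\}.
\]
The marginals fall out immediately: $\prob(X=1)=\prob(U\le p)=p$, so $X\sim\mathrm{Ber}(p)$, and analogously $Y\sim\mathrm{Ber}(q)$. For the disagreement probability, since $p\le q$ the event $\{X\neq Y\}$ coincides with $\{X=0,\,Y=1\}=\{p<U\le q\}$, which has Lebesgue measure $q-p=|p-q|$. This is the whole proof: there is no real obstacle, just a bookkeeping check of the two marginals and of the single uniform interval on which $X$ and $Y$ differ.

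If one wishes to put this in context, $|p-q|$ is exactly the total variation distance between $\mathrm{Ber}(p)$ and $\mathrm{Ber}(q)$, so the coupling inequality shows that no coupling can achieve a disagreement probability strictly smaller than $|p-q|$; the construction above is therefore a \emph{maximal} coupling. The statement only asserts existence, so this optimality remark is a digression and nothing further is needed for the fact as written.
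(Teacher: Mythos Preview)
Your proof is correct and is the standard maximal-coupling construction for two Bernoulli laws. The paper itself does not prove this fact at all; it simply states it and refers to \citet[Theorem 2.9]{remcovol2}, so there is nothing to compare beyond noting that your argument supplies exactly the kind of elementary verification the cited reference would contain.
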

Using the above coupling, we can construct a sequence of independent Bernoulli random variables $(b_{ij})$ and $(c_{ij})$ with parameters $p_{ij}^{\mathrm{cl}}$ and $q_{ij}^{\mathrm{grg}}$, respectively. Let $\M_N^{\mathrm{cl}}$ and $\M_N^{\mathrm{grg}}$ be the adjacency matrix of Chung-Lu and Generalized random graph models respectively, with the above coupled Bernoulli random variables. Suppose the sequence $(d_i)_{i\in[n]}$ satisfies the assumptions described in Example 2 and let $N\vep_N\to \lambda$ and $\bA_N^{\mathrm{cl}}=\lambda^{-1/2} \M_N^{\mathrm{cl}}$ and $\bA_N^{\mathrm{grg}}= \lambda^{-1/2}\M_N^{\mathrm{grg}}$. Then, 
\[
\begin{split}
       \E \left[d_L^3\left(\ESD(\bA_N^{\mathrm{cl}}),\ESD(\bA_N^{\mathrm{grg}})\right)\right] &\leq \frac{1}{N}\E\left[\Tr(\bA_N^{\mathrm{cl}}-\bA_N^{\mathrm{grg}})^2\right] = \frac{1}{N \lambda}\E\left[\sum_{i,j=1}^N (b_{ij} - c_{ij})^2\right] \\
        &= \frac{1}{\lambda N}\E\left[\sum_{i,j=1}^N (b_{ij} - c_{ij})^2 \mathbbm{1}_{\{b_{ij}\neq c_{ij}\}}\right] \\
        &\leq \frac{1}{\lambda N}\sum_{i,j=1}^N \prob(b_{ij}\neq c_{ij}) 
        \leq \frac{1}{\lambda N}\sum_{i,j=1}^N \left|p_{ij}^{\mathrm{cl}} - p_{ij}^{\mathrm{grg}}\right|.
    \end{split}
\]
since $(b_{ij}-c_{ij})^2$ can be trivially bounded by 1. Using $x- \frac{x}{1+x}\le \frac{x^2}{1+x}\le x^2$ for any $x>0$, we have
    \[
    p_{ij}^{\mathrm{cl}} - p_{ij}^{\mathrm{grg}} = \frac{d_id_j}{m_1} - \frac{d_id_j}{m_1 + d_id_j} \le \frac{d_i^2d_j^2}{m_1^2} \leq \frac{m_{\infty}^4}{m_1^2}.
    \]

   Therefore
    \begin{align*}
         \E \left[d_L^3\left(\ESD(\bA_N^{\mathrm{cl}}),\ESD(\bA_N^{\mathrm{grg}})\right)\right]&\le \frac{C}{\lambda N} \sum_{i,j=1}^N  \frac{m_\infty^4}{m_1^2}\\
         &= \frac{C}{\lambda N} N^2 \frac{m_\infty^4}{m_1^2}\le \bigO\left(\frac{Nm_{\infty}^4}{m_1^2}\right).
    \end{align*}
    
If we consider $m_{\infty} = o(m_1^{1/4})$, then the empirical distribution functions are close. Now using Markov inequality and the fact that $\ESD(\bA_N^{\mathrm{cl}})$ converges weakly in probability to $\mu_{CL,\lambda}$ it follows that
    $$\lim_{N\to\infty}\ESD(\bA_N^{\mathrm{grg}})= \mu_{CL,\lambda}\, \text{ weakly in probability}. $$


\paragraph{\bf Example 4: Norros-Riettu.} 
Let $(d_i)_i$ be a given sequence and $
    w_i = \frac{d_i}{\sqrt{m_1}}$. Take $f(x,y) = 1 - \exp(-xy)$. Then, 
    \[
    p_{ij}^{\mathrm{nr}} = 1 - \exp\left(-\frac{d_id_j}{m_1}\right).
    \]
Again the form of the above connection probability does not fall directly in our set-up but we can show that Norros-Riettu model is close to the generalized random graph models. Let $\bA_N^{\mathrm{nr}}=\lambda^{-1/2} \M_N^{\mathrm{nr}}$ where $\M_N^{\mathrm{nr}}$ is the adjacency of the Norros-Riettu model. Without loss of generality, we assume that we can couple Bernoulli random variable $c_{ij}$ and $d_{ij}$ with parameters $p_{ij}^{\mathrm{grg}}$ and $p_{ij}^{\mathrm{nr}}$ using Fact \ref{fact:coupling}. Just as in the previous example, it follows using Fact \ref{fact:HW} that
\begin{align*}
  \E \left[d_L^3\left(\ESD(\bA_N^{\mathrm{grg}}),\ESD(\bA_N^{\mathrm{nr}})\right)\right] \leq \frac{1}{\lambda N}\sum_{i,j=1}^N \E\left[(c_{ij}-d_{ij})^2\one_{\{c_{ij}\neq d_{ij}\}} \right]  .
\end{align*}
We bound trivially $(c_{ij}-d_{ij})^2$ by a constant $C_1>0$ and hence we get that
\begin{align*}
  \E \left[d_L^3\left(\ESD(\bA_N^{\mathrm{grg}}),\ESD(\bA_N^{\mathrm{nr}})\right)\right] \leq \frac{C_1}{\lambda N}\sum_{i,j=1}^N \prob\left(c_{ij}\neq d_{ij}\right) =  \frac{C_1}{\lambda N}\sum_{i,j=1}^N ( p_{ij}^{\mathrm{nr}}-p_{ij}^{\mathrm{grg}})
\end{align*}
Now, 
\[\begin{split}
    p_{ij}^{\mathrm{nr}}-p_{ij}^{\mathrm{grg}} &= \left( 1 - \exp\left( -\frac{d_id_j}{m_1}\right) - \frac{d_id_j}{m_1 + d_id_j}\right)\\
    &= \left( \frac{d_i^2d_j^2}{m_1^2 + m_1d_id_j} \right) + \frac{\lambda}{N}\bigO\left( \frac{d_i^2d_j^2}{m_1^2} \right)\\
    &\leq C'\frac{d_i^2d_j^2}{m_1^2},
    \end{split}
    \]
    for some constant $C'>0$. Therefore, for some new constant $C_1'>0$, 
\begin{align}
    \E \left[d_L^3\left(\ESD(\bA_N^{\mathrm{grg}}),\ESD(\bA_N^{\mathrm{nr}})\right)\right]  \leq \frac{C'_1}{\lambda N} \frac{m_2^2}{m_1^2} \label{eq:distm2m1}
\end{align}
where $m_2= \sum_{i=1}^N d_i^2$. Since $W$ has compact support, we have that $\frac{m_2}{Nm_{\infty}}\to \E[W^2]$ and $\frac{m_1}{Nm_{\infty
}} \to \E[W]$. So $\frac{m_2^2}{m_1^2}$ is bounded for large $N$ and hence the right hand side of \eqref{eq:distm2m1} goes to $0$. This shows that

$$\lim_{N\to\infty}\ESD(\bA_N^{\mathrm{nr}})= \mu_{CL,\lambda}\, \text{ weakly in probability}. $$

\begin{figure}[h]
	  \centering
   \includegraphics[width=0.5\textwidth]{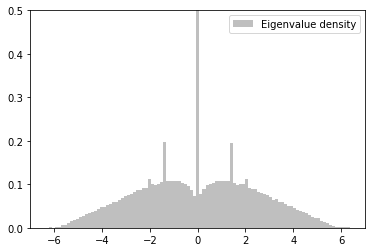}
    \caption{Spectral distributions for the Chung-Lu random graph, the generalised random graph, and the Norros-Riettu random graph on 10,000 vertices with $\{d_i\}_i$ uniformly generated integers in $[1,5]$}   \label{pap1-fig:NRfig}
\end{figure}
\FloatBarrier

\paragraph{\bf Example 5: Inhomogeneous Random Graphs:} Let $w_i = \frac{i}{N}$ and $f:[0,1]^2\to[0,1]$ be any continuous function. Then, 
    \[
    p_{ij} = \varepsilon_Nf\left(\frac{i}{N},\frac{j}{N}\right).
    \]
This is a case which falls directly into our set-up if we assume $N\vep_N\to \lambda$ and the measure $\mu_w$ is the Lebesgue measure. The other examples considered in this section are mostly of the rank-1 type but through this example, one can achieve limiting measures which are of wide variety. 


\begin{figure}
     \centering
     \begin{subfigure}[b]{0.48\textwidth}
         \centering
         \includegraphics[width=\textwidth]{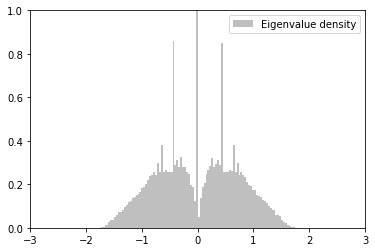}
         \caption{$\lambda=5$.}
         \label{pap1-fig:IRGlam5}
     \end{subfigure}
     \hfill
     \begin{subfigure}[b]{0.48\textwidth}
         \centering
         \includegraphics[width=\textwidth]{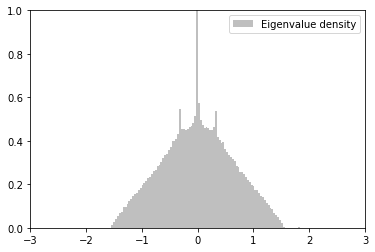}
         \caption{$\lambda=10$.}
         \label{pap1-fig:IRGlam10}
     \end{subfigure}
     \hfill
        \caption{The Inhomogeneous Random Graph on 10,000 vertices, with the inhomogeneity function $f(x,y) = r_1(x)r_1(y) + r_2(x)r_2(y)$, where $r_1(x)=\frac{x}{1+x}$ and $r_2(x)=x$.}
        \label{pap1-fig:IRG}
\end{figure}

\FloatBarrier

We note that in \cite{remcovol2}, inhomogeneous random graphs are introduced in a much more abstract setting, following the works of \cite{bollobas2007phase}. The connectivity function $f$ is generally continuous and also satisfies reducibility properties. The above examples also fall under the setup described there. 






\section{\bf Method of Moments: Proof of Theorem \ref{pap1-momenttheorem}}\label{pap1-Momentsection} 
In this section, we will prove the main result Theorem \ref{pap1-momenttheorem} using the method of moments.

 We begin with a small observation. Recall from Assumption \ref{item:A3} that if $o_N$ is an uniformly chosen vertex and $W_N= w_{o_N}$ and we assume $W_N \xrightarrow{\dd} W$. This means that $W_N$ has a distribution function $F_N(x)$ given by
 $$F_N(x)= \frac{1}{N}\sum_{i=1}^N \one_{\{w_i\le x\}}$$ and if we denote by $F$ the distribution of $W$ then for any continuity point $x$ of $F$ we have
 $$F_N(x)\to F(x).$$
 Also for any bounded continuous function $g$, we have $\E[g(W_N)] \to \E[g(W)]$. Let $o_1, \ldots, o_k$ be i.i.d. Uniform random variables on $[N]$. Let $W_{N,i}=w_{o_i}$ for $i=1, \ldots, k$. Then
\[
\left(W_{N, 1},...,W_{N, k} \right) \xrightarrow{\dd} (W_1, W_2,..., W_k)
\]
 where $W_1,\ldots, W_k$ are $k$ independent copies of the limiting variable $W$. Hence for any bounded continuous $g$ in $k$-variables we have
 \begin{equation}\label{pap1-weightweakconvergence}
     \E\left[g(W_{N,1}, \ldots, W_{N,k})\right]\to \E\left[g(W_1,\dots, W_k)\right].
 \end{equation}


In our model, we can allow self-loops as we are not imposing that $f(x,x)=0$ but the presence of self-loops does not affect the $\ESD$. The following lemma shows that we can remove the self-loops.

\begin{lemma}[Diagonal contribution] \label{pap1-zerodiagonal}
Let $\widetilde{\bA}_N$ be the matrix $\bA_N$ with zero on the diagonal, and  let $d_L$ denote the L\'{e}vy distance. Then, 
\begin{equation*}
d_L\left( \ESD(\widetilde \bA_N), \ESD(\bA_N)\right)\xrightarrow{\prob} 0.
\end{equation*}
In particular, if $\ESD(\bA_N)$ converges weakly in probability to $\mu_{\lambda}$, then so will  $\ESD(\widetilde \bA_N)$ and visa-versa. 
\end{lemma}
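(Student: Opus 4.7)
The plan is to reduce the L\'{e}vy distance between the two empirical spectral distributions to the Frobenius norm of a diagonal error matrix via Fact \ref{fact:HW}, and then show this error is negligible because the diagonal of $\bA_N$ has a vanishing scale.

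First I would write $\bA_N - \widetilde{\bA}_N = D_N$, where $D_N$ is the diagonal matrix with entries $D_N(i,i) = \bA_N(i,i) = \lambda^{-1/2}\M_N(i,i)$, and $\M_N(i,i)$ is a Bernoulli random variable with parameter $p_{ii} = \varepsilon_N f(w_i,w_i) \wedge 1$. Applying Fact \ref{fact:HW} to the symmetric matrices $\bA_N$ and $\widetilde{\bA}_N$ gives
\[
d_L^3\bigl(\ESD(\bA_N), \ESD(\widetilde{\bA}_N)\bigr) \le \frac{1}{N}\Tr(D_N^2) = \frac{1}{N\lambda}\sum_{i=1}^N \M_N(i,i)^2 = \frac{1}{N\lambda}\sum_{i=1}^N \M_N(i,i),
\]
where in the last step I used that $\M_N(i,i) \in \{0,1\}$.

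Next, I would take expectations and use the boundedness of $f$ from Assumption \ref{item:A1} together with the sparsity assumption \ref{item:A2}:
\[
\E\left[\frac{1}{N\lambda}\sum_{i=1}^N \M_N(i,i)\right] = \frac{1}{N\lambda}\sum_{i=1}^N p_{ii} \le \frac{\varepsilon_N C_f}{\lambda} = \O{\frac{1}{N}},
\]
since $\varepsilon_N \sim \lambda/N$. Markov's inequality then yields that $\frac{1}{N\lambda}\sum_{i=1}^N \M_N(i,i) \to 0$ in probability, and combining with the bound above shows $d_L(\ESD(\bA_N), \ESD(\widetilde{\bA}_N)) \xrightarrow{\prob} 0$.

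The final sentence of the lemma is then immediate from the fact that the L\'{e}vy distance metrizes weak convergence of probability measures on $\mathbb R$: if $\ESD(\bA_N)$ converges weakly in probability to $\mu_\lambda$, then so does $\ESD(\widetilde{\bA}_N)$, and symmetrically. No serious obstacle is expected here; the argument is essentially a one-line rank/Frobenius-norm perturbation bound combined with the fact that the diagonal entries of $\bA_N$ are $O(1/\sqrt{\lambda})$ Bernoullis with tiny mean $\varepsilon_N$.
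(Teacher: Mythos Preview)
Your proof is correct and follows essentially the same approach as the paper: write $\bA_N-\widetilde{\bA}_N$ as a diagonal matrix, apply Fact~\ref{fact:HW} to bound $d_L^3$ by $\frac{1}{N\lambda}\sum_i \M_N(i,i)^2$, take expectations using the boundedness of $f$ and $\varepsilon_N\sim\lambda/N$, and conclude via Markov's inequality. Your explicit use of $\M_N(i,i)^2=\M_N(i,i)$ and the remark that the L\'evy distance metrizes weak convergence are minor additions not spelled out in the paper, but the argument is otherwise identical.
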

\begin{proof}
Let $D_N$ denote the diagonal of $\bA_N$. Then, $D_N = \bA_N - \tilde{\bA}_N$. Using Fact \ref{fact:HW} we have 
\[
d_L^3\left( \ESD(\widetilde \bA_N), \ESD(\bA_N)\right) \leq \frac{1}{N}\Tr(D_N^2) = \frac{1}{N\lambda}\sum_{1\leq i \leq N}a_{ii}^2.
\]
Hence we have
\begin{align*}
  \E\left[d_L^3\left( \ESD(\widetilde \bA_N), \ESD(\bA_N)\right)\right] &\leq \frac{\sqrt{\lambda}}{N^2} \sum_{1\leq i\leq N} f\left(w_i,w_i\right)\\ 
&\leq C_f\frac{\sqrt{\lambda}}{N},  
\end{align*}
for some constant $C_f$, which comes from the fact that $f$ is bounded. The result follows using Markov's inequality.
\end{proof}

\noindent We are now ready to begin with the proofs of the main results. 
\subsection{\bf Expected Moments}
We split up the proof into three parts. To ease the notation we abbreviate the empirical spectral distribution and its expectation as
\begin{equation}\label{eq:mudef}
\mu_{N,\lambda}(\cdot)=\ESD(\bA_N)(\cdot) \qquad \text{ and } \qquad  \bar{\mu}_{N,\lambda}(\cdot)= \E[\ESD[\bA_N]](\cdot)=\frac{1}{N}\sum_{i=1}^N\prob(\lambda_i\in \cdot).
\end{equation}
Note that $\bar{\mu}_{N, \lambda}$ is now a deterministic measure, for which we compute the moments as 
\[
\begin{split}
\int x^k \bar{\mu}_{N,\lambda}(\dd x) &= \frac{1}{N}\sum_{i=1}^N\int_{\mathbb{R}}x^k\prob(\lambda_i \in \dd{x}) = \frac{1}{N}\sum_{i=1}^N\E[\lambda_i^k] =\E\tr \bA_N^k,
\end{split}
\]
where $\tr$ denotes the normalized trace. Using the trace formula it follows that 
\begin{equation}\label{pap1-traceRMTeqn}
\E[\tr \bA_N^k] = \frac{1}{N}\mathbb{E}[\Tr \bA_N^k] = \frac{1}{N\lambda^{k/2}}\sum_{1\leq i_1, i_2,...,i_k \leq N} \mathbb{E}[a_{i_1i_2}a_{i_2i_3}...a_{i_ki_1}],
\end{equation}
where $a_{ij}$ are entries of the adjacency matrix $\M$. We compute the expected moments and demonstrate that they are finite. Subsequently, we establish a concentration result to show that the moments of the empirical measure converge to $m_k$ in probability. Next, we prove that the sequence $m_k$ satisfies Carleman's condition, thereby uniquely determining the limiting measure. 

Let $SS(k)$ be the set of Special Symmetric partitions, and $\gamma = (1, 2, \ldots, k)$  be the cyclic permutation. For the following computations, one has to read the partition $\pi$ as a permutation, with elements of a block in the partition set in an ascending manner in the permutation. That is, if $\pi = \{\{1,2,5,6\},\{3,4\}\}$, then the corresponding permutation is $(1,2,5,6)(3,4)$. 

\begin{lemma}[{\bf Expected moments}]\label{pap1-expectedmoments}
    Let $\mu_{N,\lambda}$ be the $\ESD$ of $\bA_N$ and $\bar{\mu}_{N,\lambda} = \E\mu_{N,\lambda}$. Let $\gamma\pi$ be decomposed into blocks of the form 
\[
\gamma\pi = \{V_1,V_2,\ldots,V_m\}.
\]
where $m=|\gamma\pi|$ be the number of blocks. 
Define $\mathcal{F}_{\gamma\pi}$ as 
    \begin{equation}\label{eq:gammapidef}
    \mathcal{F}_{\gamma\pi} := \{ \mathbf{i} \in \mathbb{N}^k \mid i_j = i_{j'} \text{ if and only if there exists } l\in [m] \text{ s.t. } j,j' \in V_l    \}.
    \end{equation}
    Then, 
  \begin{align}\label{eq:lemma:moments}
     \int x^k \bar{\mu}_{N,\lambda}(\dd x) = \begin{cases}
    \bigO(\lambda^{k/2} N^{-1}), &\text{$k$ odd}\\
    \sum\limits_{\pi\in SS(k)} \lambda^{|\gamma\pi| - 1 - k/2} \sum\limits_{\mathbf{i}\in \mathcal{F}_{\gamma\pi}} \frac{1}{N^{|\gamma\pi|}} \prod\limits_{(a,b)\in E_{\gamma\pi}} f\left(w_{i_a},w_{i_b} \right) +\bigO(\lambda^{k/2} N^{-1}), &\text{$k$ even} 
    \end{cases}.
    \end{align}
\end{lemma}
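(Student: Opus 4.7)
My plan is to expand the $k$-th expected trace moment combinatorially, organise the sum over closed length-$k$ walks by the coincidence pattern of their vertices, and identify the patterns contributing to leading order in $N$.

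Starting from the trace formula
\[
\E[\tr\bA_N^k] = \frac{1}{N\lambda^{k/2}}\sum_{\mathbf i\in [N]^k}\E\bigl[a_{i_1i_2}a_{i_2i_3}\cdots a_{i_ki_1}\bigr],
\]
I would attach to each multi-index $\mathbf i$ its coincidence partition $\sigma(\mathbf i)\in\mathcal P(k)$ (with $a\sim b\iff i_a=i_b$), write it as $\sigma(\mathbf i)=\gamma\pi$ via the bijection $\pi\mapsto\gamma\pi$ on partitions, and split the sum as $\sum_{\pi\in\mathcal P(k)}\sum_{\mathbf i\in\mathcal F_{\gamma\pi}}(\cdots)$. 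Since the entries of $\bA_N$ are independent Bernoulli with $a_{ij}^n=a_{ij}$, the inner expectation factors as
\[
\E\Bigl[\prod_{a=1}^k a_{i_ai_{a+1}}\Bigr]=\vep_N^{|E_{\gamma\pi}|}\prod_{(a,b)\in E_{\gamma\pi}} f(w_{i_a},w_{i_b}),
\]
where $|E_{\gamma\pi}|$ is the number of distinct undirected edges of the quotient graph $G_{\gamma\pi}$.

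Using $N\vep_N=\lambda+o(1)$, boundedness of $f$, and $|\mathcal F_{\gamma\pi}|\leq N^{|\gamma\pi|}$, each $\pi$'s contribution to $\E[\tr\bA_N^k]$ is of order $\lambda^{|E_{\gamma\pi}|-k/2}\,N^{|\gamma\pi|-|E_{\gamma\pi}|-1}$. Since $G_{\gamma\pi}$ is connected (quotient of the cycle $C_k$), Euler's inequality gives $|E_{\gamma\pi}|\geq|\gamma\pi|-1$, with equality iff $G_{\gamma\pi}$ is a tree; non-tree partitions therefore contribute $\bigO(\lambda^{k/2}/N)$ each, and since $|\mathcal P(k)|$ depends only on $k$ these pool into a uniform error $\bigO(\lambda^{k/2}/N)$. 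For tree-quotient $\pi$, the closed walk on $G_{\gamma\pi}$ must traverse each of its $|\gamma\pi|-1$ edges an even number of times, which forces $k$ to be even and immediately gives the odd-moment bound. For $k$ even, the combinatorial characterisation of \cite{Bose-Saha-Sen-Sen} (reformulated in \cite{pernici2021noncrossing}) identifies the tree-quotient $\pi$ with exactly the class $SS(k)$ of Definition~\ref{pap1-SSP}; substituting $|E_{\gamma\pi}|=|\gamma\pi|-1$ converts the exponent of $\lambda$ into $|\gamma\pi|-1-k/2$ and the prefactor $N^{-|E_{\gamma\pi}|-1}$ into $N^{-|\gamma\pi|}$, producing \eqref{eq:lemma:moments}.

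\textbf{Main obstacle.} The delicate ingredient is the combinatorial equivalence between the abstract condition ``$G_{\gamma\pi}$ is a tree'' and the explicit description $SS(k)$ in Definition~\ref{pap1-SSP}: both the even-block and the nesting conditions must be extracted from the tree-walk structure, and I would invoke \cite{Bose-Saha-Sen-Sen} directly for this. A minor technicality is keeping the $o(1)$ slack in $N\vep_N\to\lambda$ and in $|\mathcal F_{\gamma\pi}|=N^{|\gamma\pi|}(1+O(1/N))$ uniform across the finite collection of partitions, so that these residuals genuinely fold into the stated $\bigO(\lambda^{k/2}/N)$ remainder without deteriorating it.
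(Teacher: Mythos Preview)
Your proposal follows essentially the same route as the paper: trace expansion, grouping walks by their coincidence pattern written as $\gamma\pi$, the order count $\lambda^{|E_{\gamma\pi}|-k/2}N^{|\gamma\pi|-|E_{\gamma\pi}|-1}$, and the tree/non-tree dichotomy via connectedness of $G_{\gamma\pi}$.

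The one substantive difference is how the identification ``$G_{\gamma\pi}$ is a tree $\Leftrightarrow$ $\pi\in SS(k)$'' is handled. You propose to invoke \cite{Bose-Saha-Sen-Sen} directly; the paper instead derives this from scratch through a sequence of structural observations (the parity Property~1 on blocks of $\gamma\pi$, the alternation Property~2 on blocks of $\pi$, Lemma~\ref{even blocks} forcing even block sizes, and Proposition~\ref{pap1-simplesymmetrictree} forcing the even sub-block nesting). Be aware that what \cite[Lemma 5.1]{Bose-Saha-Sen-Sen} literally gives is a bijection between $SS(2k)$ and \emph{coloured rooted trees}, which is not verbatim the statement ``$G_{\gamma\pi}$ is a tree iff $\pi\in SS(k)$'' in the present $\gamma\pi$-quotient language; translating between the two encodings still requires exactly the kind of parity/nesting arguments the paper spells out. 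So citing the reference is legitimate in spirit, but you would still owe the reader a sentence explaining why the coloured-tree bijection there matches the quotient-tree condition here. A second small point: the map $\pi\mapsto\gamma\pi$ (partition $\to$ permutation $\to$ partition) is not a bijection on all of $\mathcal P(k)$, so ``write $\sigma(\mathbf i)=\gamma\pi$ via the bijection'' is not quite right as stated; the paper has the same looseness, and it is harmless because the mismatches land in the $\bigO(\lambda^{k/2}/N)$ error (self-loop patterns vanish after Lemma~\ref{pap1-zerodiagonal}, and non-tree patterns are error anyway), but you should not call it a bijection.
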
 

\begin{example}
For $k=4$, take $\pi=\{\{1,2\}, \{3,4\}\}$. Then, $\gamma\pi=\{\{1, 3\}, \{2\}, \{4\}\}$. We see that tuples of the form $(1,2,1,3)$ and $(2,3,2,4)$ belong in $\mathcal{F}_{\gamma\pi}$.
\end{example}
\begin{proof}[Proof of Lemma \ref{pap1-expectedmoments}]
Recall from \eqref{pap1-traceRMTeqn} that 
\[
\frac{1}{N\lambda^{k/2}}\mathbb{E}[\Tr \bA_N^k] = \frac{1}{N\lambda^{k/2}}\sum_{\mathbf{i}\in N^k} \mathbb{E}[a_{i_1i_2}a_{i_2i_3}...a_{i_ki_1}],\]
where $\mathbf{i} = (i_1,\ldots,i_k)$. The term $a_{i_1i_2}a_{i_2i_3}...a_{i_ki_1}$ is associated with the closed walk $i_1 i_2 \ldots i_k i_1$. Let the set of distinct vertices and edges along a closed walk correspond to a $k$-tuple $\mathbf{i}$ be denoted by $V(\mathbf{i})$ and $E(\mathbf{i})$, respectively. An edge that connects vertices $i_j$ and $i_{j+1}$, will be denoted by $e=(i_j, i_{j+1})$. Without loss of generality, we assume that in  $V(\mathbf{i})$ we assign the positions where the first of distinct indices appear in $\mathbf{i}$.

For example, for the 4-tuple $ \mathbf{i}=(1,2,1,3)$, we have $V(\mathbf{i}) = \{1,2,4\}$. So, $E(\mathbf{i}) = \{(1,2), (1,4)\}$. Since $$a_{i_1i_2}a_{i_2i_3}...a_{i_ki_1}=1 \text{ if and only $a_la_{l+1}=1$ for all $(l,l+1)\in E(\mathbf{i})$}$$ we can rewrite \eqref{pap1-traceRMTeqn} as 
\begin{equation}\label{pap1-traceinitialexpression}\begin{split}
\frac{1}{N\lambda^{k/2}}\mathbb{E}[\Tr \bA_N^k] &= \frac{1}{N\lambda^{k/2}}\sum_{1\leq i_j \leq N : j \in V(\mathbf{i})} \left( \frac{\lambda}{N}\right)^{|E(\mathbf{i})|} \prod_{(a,b)\in E(\mathbf{i})} f(w_{i_a},w_{i_b}).
\end{split}
\end{equation}
Let $\pi$ be a partition of $[k] := \{1,2,\ldots,k\}$ and $\gamma\pi = \{V_1, V_2, \ldots, V_m\}$, where $m=|\gamma\pi|$. Recall the definition of $\mathcal{F}_{\gamma\pi}$ as in \eqref{eq:gammapidef} and also the graph $G_{\gamma\pi}$ corresponding to $\gamma \pi$ as in Definition \ref{pap1-partitiongraph}. Note that for a fixed $\mathbf{i}\in \mathcal{F}_{\gamma\pi}$, $V(\mathbf{i})= V_{\gamma\pi}$ and $E(\mathbf{i})=E_{\gamma\pi}$. Moreover,
if $\mathbf{i},\mathbf{i}' \in \mathcal{F}_{\gamma\pi}$, then $V(\mathbf{i}) = V(\mathbf{i}')$ and $E(\mathbf{i}) = E(\mathbf{i}')$. Using this formulation, we can rewrite our summation in \eqref{pap1-traceinitialexpression} once again as

\[
\frac{1}{N\lambda^{k/2}}\mathbb{E}[\Tr \bA_N^k] = \frac{1}{N\lambda^{k/2}}\sum_{\pi\in \mathcal P(k)}\sum_{\mathbf{i}\in \mathcal{F}_{\gamma\pi}} \left( \frac{\lambda}{N}\right)^{|E_{\gamma\pi}|} \prod_{(a,b)\in E_{\gamma\pi}} f\left(w_{i_a}, w_{i_b} \right) .
\]
Since $|\gamma\pi| = |V(\mathbf{i})|$, we can multiply and divide by $N^{|\gamma\pi|}$ to get 

\[
\frac{1}{N\lambda^{k/2}}\mathbb{E}[\Tr \bA_N^k] = \sum_{\pi\in \mathcal P(k)}\frac{1}{N^{|\gamma\pi|}}\sum_{\mathbf{i}\in \mathcal{F}_{\gamma\pi}} \lambda^{|E_{\gamma\pi}| - k/2}N^{|\gamma\pi| - |E_{\gamma\pi}| - 1}  \prod_{(a,b)\in E_{\gamma\pi}} f\left(w_{i_a},w_{i_b} \right) .
\]
Note that since $f$ is bounded, then the product is bounded. For a fixed $k$ and a partition $\pi$ of $[k]$, $|E_{\gamma\pi}| \leq k$. One can also see that $|\mathcal{F}_{\gamma\pi}| \sim N^{|\gamma\pi|}$. We thus focus only on $\lambda^{|E_{\gamma\pi}| - k/2}N^{|\gamma\pi| - |E_{\gamma\pi}| - 1}$. For this to contribute, a tuple $\mathbf{i}$ must yield a tree structure in $G_{\gamma\pi}$, this will give us $|V(\mathbf{i})| = |E(\mathbf{i})| + 1$, which would imply $|\gamma\pi| = |E_{\gamma\pi}| + 1$. In particular, all tuples $\bf{i}\in\mathcal{F}_{\gamma\pi}$ such that $G_{\gamma\pi}$ is a coloured rooted tree as defined in Definition \ref{pap1-partitiongraph} contribute to the summation.

For other graphs with $|V(\mathbf{i})| < |E(\mathbf{i})| +1$, the leading error would be of the order $\bigO(N^{-1})$. The leading order error is given when $G_{\gamma\pi}$ is a $k$-cycle and hence the error is of the order of $\lambda^{k/2} N^{-1}$. Thus our sum reduces to 
\[
\frac{1}{N\lambda^{k/2}}\mathbb{E}[\Tr \bA_N^k] = \sum_{\substack{\pi \in \mathcal P(k) :\\ G_{\gamma\pi} \text{ is a rooted labelled tree}}}\sum_{\mathbf{i}\in \mathcal{F}_{\gamma\pi}} \lambda^{|E_{\gamma\pi}| - k/2}\frac{1}{N^{|\gamma\pi|}} \prod_{(a,b)\in E_{\gamma\pi}} f\left(w_{i_a},w_{i_b} \right) \hspace{0.2cm} + \bigO(\lambda^{k/2} N^{-1}).
\]
Thus rewriting the expression with $|E_{\gamma\pi}| = |\gamma\pi| + 1$ we get,
\begin{equation}\label{pap1-tracetreeexpression}
\frac{1}{N\lambda^{k/2}}\mathbb{E}[\Tr \bA_N^k] = \sum_{\substack{\pi\in \mathcal P(k) : \\ G_{\gamma\pi} \text{ is a rooted labelled tree}}} \lambda^{|\gamma\pi| + 1 - k/2} \sum_{\mathbf{i}\in \mathcal{F}_{\gamma\pi}} \frac{1}{N^{|\gamma\pi|}} \prod_{(a,b)\in E_{\gamma\pi}} f\left(w_{i_a},w_{i_b} \right)  \hspace{0.2cm} + \bigO(\lambda^{k/2} N^{-1}).
\end{equation}

\begin{remark}\label{pap1-oneedgeremark}
    We would like to remark here that if there exists an edge $e$, such that it is traversed only \textit{once} in the closed walk, then the graph cannot be a tree. Consider, without loss of generality, that this edge $e$ is $(1,2)$, with $1\in V_1$ and $2\in V_2$, as in figure \ref{pap1-fig:oneedge}, where $V_1,V_2\in\gamma\pi$. Here $C_1$ and $C_2$ are the remaining components of the graph $G_{\gamma\pi}$.

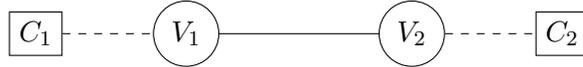
\begin{figure}[h]
    \centering
    \begin{center}
    \begin{tikzpicture}[scale=2] 
    \node[shape=circle,draw=black,scale=1] (1) at (2.5,4) {$V_2$};
    \node[shape=circle,draw=black,scale=1] (2) at (1,4) {$V_1$};
    \node[shape=rectangle,draw=black,scale=1] (A) at (0,4) {$C_1$}; \node[shape=rectangle,draw=black,scale=1] (B) at (3.5,4) {$C_2$};
    
    \path [-] (1) edge node[left] {} (2); 
    \path [dashed] (A) edge node[left] {} (2);
    \path [dashed] (1) edge node[left] {} (B);
\end{tikzpicture}
 \end{center}
    \caption{Graph associated to $\gamma\pi$ having blocks $V_1$ and $V_2$ with the edge between them traversed only once.}
    \label{pap1-fig:oneedge}
\end{figure}
\FloatBarrier
Thus, since the closed walk $1\to2,2\to3,\ldots k\to 1$ has to return back to $V_1$, it has to do so via $C_1$ since the edge $e$ cannot be traversed again. Clearly, this will form a cycle in the graph. Thus, every edge must be traversed at least twice.
\end{remark}
It is well-known (see \cite{nica2006lectures}) that for $\pi\in NC_2(k)$ if and only if $|\gamma \pi|= 1+k/2$, but in the above setting we shall see that other partitions will also contribute as $|\mathcal{F}_{\gamma\pi}| \sim N^{|\gamma\pi|}$. In particular, we need to sum over only those $\pi$ that give rise to a tree structure.  We show in a series of characterizations that the resulting partitions are $SS(k)$.

\paragraph{\bf Characterising partitions} Recall from Definition \ref{pap1-partitiongraph} that to construct a graph $G_{\gamma\pi}$ associated with a partition $\pi$ of $[k]$, we need to evaluate $\gamma\pi$ to construct the vertex set and then perform a closed walk. We prove a property that will play a key role in characterising partitions in the proof of Theorem \ref{pap1-momenttheorem}.

\paragraph{{\bf Property 1.}} [Block characterisation]\label{gammapi}
For $\pi\in \mathcal P(k)$ with $\gamma\pi = \{V_1, \ldots, V_l\}$, if $G_{\gamma\pi}$ has a tree structure, then all elements of a block $V_j$, $\forall\,  1 \leq j \leq l$,  have either all odd elements or all even elements. 

\begin{proof}[Proof of Property 1]
For simplicity, we show that the first block has this property. Assume that $V_1$ has all odd elements except one special element $a\in [k]$. We assume that element `1' belongs to $V_1$.

Recall from the definition of $G_{\gamma\pi}$ that we first perform a closed walk on $[k]$ as $1\to 2 \to 3 \to \ldots \to k \to 1$, and then collapse elements of the same block of $\gamma\pi$ into a single vertex. Thus, if $a-1$ (or $a+1$) belong to $V_1$, then, we get a self-loop since $a-1$ and $a$ collapse to the same vertex and the edge $a-1\to a$ (or $a\to a+1$) forms a loop, which does not give a tree structure. Hence $a-1$ (respectively $a+1$) is not in $V_1$. 

Now, suppose $a-1\in V_j$ for some $j\neq 1$. Then, there exists a path from $V_1$ to $V_j$ of length $t>1$, since if $t=1$, the closed walk $1\to 2\to \ldots$ would imply that $a-2\in V_1$, which contradicts our claim. Now, if $t>1$, the next edge $\{a-1\to a\}$ from the closed walk will be from $V_j$ to $V_1$, leading  to a cycle in the graph. Thus, violating property 1 yields a graph that is not a tree. 

\end{proof} 

 \paragraph{{\bf Property 2.}}[Initial characterisation of $\pi$] \label{inipi}
If $\pi\in \mathcal P(k)$ then in any block of $\pi$, no two consecutive elements can either be both odd or both even. 

\begin{proof}[Proof of Property 2]
Suppose $a_1$ and $a_2$ belong in the same block of $\pi$ with no elements between them, and $a_1<a_2$, either both even or odd. Then in $\gamma\pi$, $a_1$ and $ a_2 + 1$ belong in the same block, which contradicts Property 1. 
\end{proof}

\paragraph{\bf Property 3.}[Diagonal terms]\label{diagonalterms}
If $\pi$ is a contributing partition, then for any $\mathbf{i} = (i_1,\ldots,i_k)$ in $\mathcal{F}_{\gamma\pi}$, each element of $\mathbf{i}$ must be pairwise distinct, that is, $i_1 \neq i_2, i_2 \neq i_3,\ldots,i_{k-1} \neq i_k$. 

\begin{proof}[Proof of Property 3]
Suppose not, and assume $i_a=i_{a+1}$ for some $1\le a\le k-1$. Then, in $\gamma\pi$, `a' and `a+1' belong to the same block. This contradicts Property 1.
\end{proof} 
We now use the above properties for further characterisation of the partitions.

\begin{lemma}\label{even blocks}
Every block in $\pi$ must be of even size. 
\end{lemma}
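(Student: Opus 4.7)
The plan is to argue by contradiction, playing Properties 1 and 2 against each other. I would suppose that some block $V \in \pi$ has odd cardinality $n$ and enumerate its elements in increasing order as $V = \{a_1 < a_2 < \cdots < a_n\}$. Property 2 forces the parities of the $a_i$ to alternate along this listing, so since $n$ is odd, $a_1$ and $a_n$ must share the same parity.

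The crucial step is to track a single element of $V$ under the composition $\gamma\pi$. Reading $\pi$ as a permutation, the block $V$ corresponds to the cycle $(a_1 \, a_2 \, \cdots \, a_n)$, so $\pi(a_n) = a_1$, and hence $(\gamma\pi)(a_n) = a_1 + 1 \pmod{k}$. This forces $a_n$ and $a_1 + 1$ to lie in the same cycle of $\gamma\pi$, i.e., in the same block of $\gamma\pi$ viewed as a partition. Since $a_1 + 1$ has parity opposite to $a_1$, and $a_1$ has the same parity as $a_n$, the elements $a_n$ and $a_1+1$ have opposite parities, contradicting Property 1, which demands that every block of $\gamma\pi$ contains elements of a single parity.

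The one configuration requiring extra care is the boundary case $a_1 = k$, forcing $V = \{k\}$ to be a singleton. If $k$ is even, the argument still goes through directly: $\gamma\pi(k) = 1$ places $k$ and $1$, of opposite parities, in the same block of $\gamma\pi$, again contradicting Property 1. If $k$ is odd, the Remark already observed that in a tree every edge of the closed walk is traversed an even number of times, so $k$ itself must be even for $G_{\gamma\pi}$ to be a tree, ruling out any contributing $\pi$ and making the lemma vacuous. I expect the main conceptual subtlety to be the bookkeeping around the composition $\gamma\pi$ and the parity shift it induces, but once this is in hand the contradiction is immediate from the two Properties.
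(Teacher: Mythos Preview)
Your argument is correct and is essentially the same as the paper's: both suppose an odd block $\{a_1<\cdots<a_n\}$, use Property~2 to force alternating parities so that $a_1$ and $a_n$ share a parity, then observe that $(\gamma\pi)(a_n)=a_1+1$ places $a_n$ and $a_1+1$ in the same block of $\gamma\pi$ with opposite parities, contradicting Property~1. Your treatment of the boundary case $a_1=k$ is more explicit than the paper's (which simply splits on the parity of $l_1$ and does not discuss the wrap-around), but the substance of the proof is identical.
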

\begin{proof}[Proof of Lemma \ref{even blocks}]
We prove this by contradiction. Consider an odd-sized block $ V=\{l_1,\ldots, l_r\} \in \pi$ with $l_1< l_2< \cdots < l_r$. Assume that $l_1$ is odd. By Property 2, $l_2$ must be even, and by continuing the argument, we have that at every even position, the element is even, and at odd positions, it is odd. Since $r$ is odd, and $l_r$ is on the $r^{\text{th}}$ position, which is an odd position, $l_r$ must be odd. Then, in $\gamma\pi$, the element $l_r$ will map to the element $l_1+1$ which is even, which contradicts Property 1. A similar argument holds when $l_1$ is taken to be even. This proves the result.
\end{proof}

\begin{corollary}[Vanishing odd moments]\label{oddmoment}
The odd moments vanish as $N\rightarrow \infty$.
\end{corollary}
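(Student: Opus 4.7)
The plan is to deduce this corollary directly from Lemma \ref{pap1-expectedmoments} together with Lemma \ref{even blocks}. Recall from \eqref{eq:lemma:moments} that, for odd $k$, the expected moment $\int x^k \bar{\mu}_{N,\lambda}(\dd x)$ has already been bounded by $\bigO(\lambda^{k/2} N^{-1})$, since the main contribution comes from tree-type partitions and the odd case is treated separately. So my first step is just to appeal to this bound.

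The cleaner conceptual reason, which I would state for clarity, is that the sum of contributing partitions is simply empty when $k$ is odd. Indeed, Lemma \ref{even blocks} shows that any $\pi\in\mathcal P(k)$ whose associated graph $G_{\gamma\pi}$ is a tree must consist entirely of blocks of even size. Since the block sizes of $\pi$ partition $k$, and any sum of even numbers is even, no such partition can exist when $k$ is odd. Consequently, for odd $k$, the main sum in \eqref{eq:lemma:moments} is vacuous and only the error term survives.

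Therefore, for odd $k$,
\[
\int x^k \bar{\mu}_{N,\lambda}(\dd x) \;=\; \bigO\!\left(\frac{\lambda^{k/2}}{N}\right) \;\xrightarrow[N\to\infty]{}\; 0,
\]
which proves that all odd moments of the limiting measure $\mu_\lambda$ vanish. I do not anticipate any obstacle here; the content is entirely a consequence of Lemma \ref{even blocks} and the bookkeeping already done in Lemma \ref{pap1-expectedmoments}.
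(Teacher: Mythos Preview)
Your proof is correct and follows essentially the same approach as the paper: both argue that for odd $k$ every partition of $[k]$ must contain a block of odd size, so by Lemma \ref{even blocks} no tree-type partition exists and only the $\bigO(\lambda^{k/2}N^{-1})$ error survives. Your presentation of the parity argument (sum of even block sizes must be even) is, if anything, slightly more explicit than the paper's.
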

\begin{proof}[Proof of Corollary \ref{oddmoment}]
 Recall that partitions whose graphs do not yield a tree structure contribute to the error term with leading order $\bigO(N^{-1})$. For $k$ odd, every $\pi\in SS(k)$ must have at least one block of odd size. Therefore, Lemma \ref{even blocks} is violated, and consequently, the odd moments asymptotically. 
\end{proof}

\begin{proposition}\label{pap1-simplesymmetrictree}
Let $\pi\in \mathcal P(k)$  such that $G_{\gamma\pi}$ is a rooted labelled tree. Then $\pi$ must satisfy the following properties. 
\begin{itemize} 
\item All blocks of the partition must be of even size.
\item Between any two successive elements of a block, there lie sub-blocks of even sizes.
\end{itemize}
\end{proposition}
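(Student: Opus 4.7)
The first property of the proposition---that all blocks of $\pi$ have even size---is the content of Lemma \ref{even blocks}, so the plan focuses on establishing the second property. I will proceed by strong induction on $k$; the base case $k=2$ is immediate, since $\pi = \{\{1,2\}\}$ has $b = a+1$ for its only successive pair. For the inductive step, I fix a block $V \in \pi$ with two successive elements $a < b$ in $V$ satisfying $b > a+1$, and consider the interval $I := \{a+1, \ldots, b-1\}$.

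The first observation is a parity check: the identity $\gamma\pi(a) = \pi(a)+1 = b+1$ shows $a$ and $b+1$ lie in the same $\gamma\pi$-block; combined with Property 1, which forces all elements of such a block to share a parity, this yields that $b-a$ is odd and $|I| = b-a-1$ is even. The crucial structural step is then to show that $I$ is closed under $\pi$: no block $W \neq V$ of $\pi$ has elements both inside and outside $I$. I would argue this by contradiction---if such a straddling $W$ existed, tracing the $\pi$-cycle on $W$ produces a step whose endpoints lie on opposite sides of $I$'s boundary, and the identity $\gamma\pi(x) = \pi(x)+1$ would then force a second path in $G_{\gamma\pi}$ between a pair of vertices already connected by the excursion around $I$, contradicting the tree hypothesis.

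Given this closure, the restriction $\tilde\pi := \pi|_I$ is a well-defined partition of $I$; after relabeling to $\{1, \ldots, b-a-1\}$, I would verify that the associated graph $G_{\gamma_{b-a-1}\tilde\pi}$ is again a tree (using that the sub-walk from $a+1$ to $b-1$ traces out a subtree of $G_{\gamma\pi}$, with the identification $a \sim b+1$ closing it cyclically), then apply the inductive hypothesis to deduce $\tilde\pi \in SS(b-a-1)$. The even-sized sub-block condition for $\tilde\pi$ translates directly back to $\pi$, yielding the desired structure between $a$ and $b$. The main obstacle in this plan is the no-straddling claim---a careful case analysis, depending on whether the boundary crossing occurs at the $a$-end or the $b$-end of $I$ and in which direction, is needed to exhibit the forbidden cycle in $G_{\gamma\pi}$---and a secondary obstacle is ensuring that the restricted cyclic permutation $\gamma_{b-a-1}$ produces a graph that is again a tree despite not being a literal restriction of $\gamma$.
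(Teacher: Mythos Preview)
Your inductive strategy is genuinely different from the paper's direct walk-tracing contradiction, and the parity observation $|I|=b-a-1$ even is correct. The real gap, however, is not in the no-straddling claim or the tree-restriction step you flagged, but in your final step: the assertion that ``the even-sized sub-block condition for $\tilde\pi$ translates directly back to $\pi$, yielding the desired structure between $a$ and $b$'' does not hold. Knowing $\tilde\pi\in SS(|I|)$ gives you two things: every block of $\tilde\pi$ is even, and between any two \emph{successive elements of a $\tilde\pi$-block} the intervening sub-blocks are even. What the second bullet of the proposition demands at the pair $(a,b)$, though, is that \emph{every} maximal run inside $I$ be even---including the outermost runs, the one starting at $a+1$ and the one ending at $b-1$. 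These boundary runs are not sandwiched between any successive pair of $\tilde\pi$ (nothing lies to their left, respectively right, inside $I$), so the $SS$ hypothesis on $\tilde\pi$ says nothing about their sizes.

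Concretely: $\tilde\pi=\{\{1,4,5,8\},\{2,3,6,7\}\}$ on $[8]$ has $\tilde\pi\in SS(8)$ and $G_{\gamma'\tilde\pi}$ a tree, yet its first run $\{1\}$ has size~$1$. If this $\tilde\pi$ arose as $\pi|_I$ your induction would not force the sub-block immediately after $a$ to be even. Lifting to $\pi=\{\{1,10\},\{2,5,6,9\},\{3,4,7,8\}\}$ on $[10]$, one computes $\gamma\pi=(1)(2,6,10)(3,5,7,9)(4,8)$ and $G_{\gamma\pi}$ is the path $\{1\}\text{--}\{2,6,10\}\text{--}\{3,5,7,9\}\text{--}\{4,8\}$, so a tree; but between the successive pair $(1,10)$ the first sub-block is $\{2\}$, of size~$1$. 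So the obstruction is not an artifact of your method---either an additional argument specifically controlling the boundary runs is needed, or the target statement has to be matched more carefully against Definition~\ref{pap1-SSP}. The paper's proof, by contrast, never restricts to $I$: it fixes a single maximal run $\{a_1,\dots,a_1+e\}$ inside a block $B$ together with the next element $a_2\in B$, writes down explicitly which $\gamma\pi$-blocks contain $a_1,\dots,a_1+e+1,a_2,a_2+1$, and then follows the closed walk to exhibit a forced cycle---a local argument with no induction.
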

\begin{proof}[Proof of Proposition \ref{pap1-simplesymmetrictree}]
The first condition is already proved using Lemma \ref{even blocks}. For the second condition, begin by considering a block $B$ that is of the form 
\[
B = \{\ldots,a_1,a_1+1,\ldots, a_1+e,a_2,\ldots\}    
\]
with $a_1-1\notin B$, and there doesn't exist any element $a'$ such that $a_1+e<a'<a_2$ and $a'\in B$. The sub-block here of interest is $\{a_1,a_1+1,\ldots,a_1+e\}$. We claim that this sub-block has an odd number of elements, or equivalently, $e$ is an even number. We can also assume, without loss of generality, that $a_1$ is an odd number. As a consequence of Property 2, $a_2$ must be even. If we now evaluate $\gamma\pi$ using the above information, we have that $\gamma\pi$ contains the following three (and possibly more) blocks.
\[
\begin{split}
    V_1 &= \{\ldots ,a_1, a_1+2,\ldots ,a_1+e, a_2+1,\ldots \},\\
    V_2 &= \{\ldots ,a_1+1, a_1+3,\ldots ,a_1+e-1, a_1+e+1,\ldots\},\\
    V_3 &= \{\ldots ,a_2,\ldots \}.
\end{split}
\]
Thus, the graph associated with $\gamma\pi$ will be as shown in figure \ref{pap1-fig:oddsubblock}, where $C_1$, $C_2$, and $C_3$ are the remaining components of the graph.
\begin{figure}[h]
    \centering
    \begin{center}
    \begin{tikzpicture}[scale=2] 
    \node[shape=circle,draw=black,scale=1] (1) at (2.5,4) {$V_2$};
    \node[shape=circle,draw=black,scale=1] (2) at (1,4) {$V_1$};
    \node[shape=circle,draw=black,scale=1] (3) at (2.5,3) {$V_3$};
    \node[shape=rectangle,draw=black,scale=1] (A) at (0,4) {$C_1$}; \node[shape=rectangle,draw=black,scale=1] (B) at (3.5,4) {$C_2$};
    \node[shape=rectangle,draw=black,scale=1] (C) at (1.5,3) {$C_3$};
    \path [-] (1) edge node[left] {} (2);
    \path [-] (2) edge node[left] {} (3); 
    \path [dashed] (A) edge node[left] {} (2);
    \path [dashed] (1) edge node[left] {} (B);
    \path [dashed] (3) edge node[left] {} (C);
\end{tikzpicture}
 \end{center}
    \caption{Graph associated to $\gamma\pi$ having blocks $V_1$, $V_2$ and $V_3$.}
    \label{pap1-fig:oddsubblock}
\end{figure}
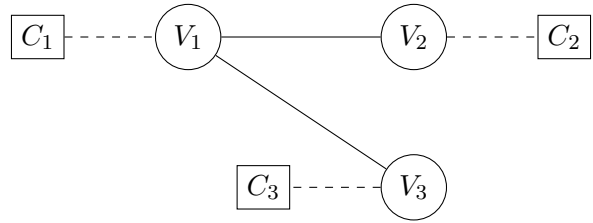
\FloatBarrier
We now focus on the closed walk that occurs on the tuple $[k]$. Since this is a closed walk, it does not matter if instead of beginning at $1$, we begin at an arbitrary element $k_1\in[k]$ and perform $\{k_1\to k_1+1,\ldots, k\to 1, 1\to 2,\ldots k_1-1\to k_1\}$. So, we pick $a_1$ as the starting point and consequently, without loss of generality, we assume the walk begins at $V_1$. 

The walk will immediately proceed to move back and forth between $V_1$ and $V_2$ due to the path $\{a_1\to a_1+1, a_1+1\to a_1+2,\ldots, a_1+e\to a_1+e+1\}$, and will eventually end at $V_2$.  

Now, the walk will jump from $V_2$ into the component $C_2$. On the other hand, when the walk eventually enters $V_3$, it will move at least once to $V_1$, due to the path $\{a_2\to a_2+1\}$. So, to preserve the tree structure, the walk must first come back to $V_2$ and then proceed to $V_3$ via $V_1$. Thus, there is an element $a'$ such that $a'\in V_2$ and $a'+1\in V_1$, where $a'>a_1+e$ and $a'<a_2$. Therefore, in $\gamma\pi$, $a_1+e$ maps to $a'+1$. This implies that $a_1+e$ and $a'$ belong to the same block in $\pi$, and thus, $a'\in B$. This contradicts our construction, and therefore, the walk must form a cycle from $V_2$ or $C_2$ to either $C_1$, $C_3$ or $V_3$. 
\end{proof}
Recall the definition of \textit{Special Symmetric Partitions} as provided in Definition \ref{pap1-SSP}, where the two properties outlined in Proposition \ref{pap1-simplesymmetrictree} are the main characteristics. As a result, we have demonstrated \eqref{eq:lemma:moments}, leading us to the conclusion of the proof of Lemma \ref{pap1-expectedmoments}.
\end{proof}

We would now like to take limits in \eqref{eq:lemma:moments} and finally get the expression for the moments. The following lemma is an easy consequence of Lemma \ref{pap1-weightweakconvergence} and the fact that $|\mathcal{F}_{\gamma\pi}|\sim N^{|\gamma\pi|}$.
\begin{lemma}\label{pap1-integrability}
Let $\pi\in SS(k)$ and $\mathcal{F}_{\gamma\pi}$ be as in Lemma \ref{pap1-expectedmoments}. Also, $G_{\gamma\pi}=(V_{\gamma\pi},E_{\gamma\pi})$ be the graph as in Definition \ref{pap1-partitiongraph}.
    \begin{equation}\label{eq:limitmoments}
\lim_{N\to\infty} \sum_{\mathbf{i}\in\mathcal{F}_{\gamma\pi}} \frac{1}{N^{|\gamma\pi|}} \prod_{(a,b)\in E_{\gamma\pi}} f\left(w_{i_a},w_{i_b}\right) = \int_{[0,\infty)^{|\gamma\pi|}} \prod_{(a,b)\in E_{\gamma\pi}}f(w_a,w_b) \mu_w^{\bigotimes |\gamma\pi|}( d\mathbf{w}). 
\end{equation}
\end{lemma}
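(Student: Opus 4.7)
The plan is to reinterpret the sum on the left-hand side as an expectation with respect to the empirical distribution of the weights, and then invoke the weak convergence stated in \eqref{pap1-weightweakconvergence}. Let $m = |\gamma\pi|$, and order the blocks $\gamma\pi = \{V_1, \ldots, V_m\}$ by their minimum element. Any $\mathbf{i} = (i_1, \ldots, i_k) \in \mathcal{F}_{\gamma\pi}$ is uniquely determined by the $m$-tuple $(j_1, \ldots, j_m)$ of \emph{distinct} indices in $[N]$, where $j_l$ is the common value of $i_a$ for $a \in V_l$. Under this bijection, the product $\prod_{(a,b)\in E_{\gamma\pi}} f(w_{i_a}, w_{i_b})$ becomes $g(w_{j_1}, \ldots, w_{j_m})$, where
\[
g(x_1, \ldots, x_m) = \prod_{(V_a, V_b) \in E_{\gamma\pi}} f(x_a, x_b)
\]
is a bounded continuous function on $[0,\infty)^m$ (bounded by $C_f^{|E_{\gamma\pi}|}$ since $f$ is bounded by $C_f$, and continuous since $f$ is continuous).

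Next I would remove the distinctness restriction on the indices. Let $[N]^m_{\neq}$ denote the set of $m$-tuples of distinct entries. Since $g$ is bounded and $|[N]^m \setminus [N]^m_{\neq}| = O(N^{m-1})$, we have
\[
\left| \frac{1}{N^m} \sum_{(j_1,\ldots,j_m) \in [N]^m_{\neq}} g(w_{j_1}, \ldots, w_{j_m}) - \frac{1}{N^m} \sum_{(j_1,\ldots,j_m) \in [N]^m} g(w_{j_1}, \ldots, w_{j_m}) \right| = O(N^{-1}).
\]
Therefore it suffices to identify the limit of the unrestricted sum.

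Finally, the unrestricted sum is exactly $\E[g(W_{N,1}, \ldots, W_{N,m})]$, where $W_{N,1}, \ldots, W_{N,m}$ are i.i.d.\ samples drawn uniformly from $\{w_1, \ldots, w_N\}$, since
\[
\frac{1}{N^m} \sum_{(j_1, \ldots, j_m) \in [N]^m} g(w_{j_1}, \ldots, w_{j_m}) = \E[g(W_{N,1}, \ldots, W_{N,m})].
\]
By \eqref{pap1-weightweakconvergence}, applied to the bounded continuous function $g$, this expectation converges to $\E[g(W_1, \ldots, W_m)] = \int_{[0,\infty)^m} g(w_1, \ldots, w_m) \, \mu_w^{\bigotimes m}(d\mathbf{w})$, which after relabelling the integration variables along the blocks of $\gamma\pi$ is precisely the right-hand side of \eqref{eq:limitmoments}.

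No step here is particularly delicate: the only point requiring a small amount of care is the passage from distinct to arbitrary tuples, but this is absorbed into an $O(N^{-1})$ error thanks to the uniform bound on $f$. All other inputs—continuity of $g$, joint weak convergence of the weight samples, and bounded convergence—are immediate from assumptions \ref{item:A1} and \ref{item:A3}.
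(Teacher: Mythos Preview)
Your proof is correct and follows precisely the route the paper indicates: it states only that the lemma is an easy consequence of \eqref{pap1-weightweakconvergence} together with $|\mathcal{F}_{\gamma\pi}|\sim N^{|\gamma\pi|}$, and your argument is exactly the natural unpacking of that remark---bijection with distinct $m$-tuples, removal of the distinctness constraint at cost $O(N^{-1})$ via the bound on $f$, and application of the joint weak convergence to the bounded continuous product function.
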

Now, going back to equation \eqref{pap1-tracetreeexpression} and taking limits gives us  
\begin{align}\label{pap1-limitingmomentseq1}
\lim_{N\to\infty}\E[\tr \bA_N^k]= \begin{cases}
    0, &\text{$k$ odd}\\ \sum\limits_{\substack{\pi\in SS(k)}} \lambda^{|\gamma\pi| -1 - k/2}t(G_{\gamma\pi},f,\mu_w), &\text{$k$ even}
\end{cases}.
\end{align}
Now, the sum over $SS(k)$ can be further split up as the sum over $NC_2(k)$ and the remaining partitions. Moreover, for $\pi\in SS(k)$, we have $|V_{\gamma\pi}| = |\gamma\pi| \in \{2,3,\ldots, k/2+1\}$. In particular, for $\pi\in NC_2(k)$, $|\gamma\pi| = k/2+1$, and when $\pi$ is the full partition $\{\{1,2,\ldots,k\}\}$, $|\gamma\pi| = 2$. So, we can write
\begin{align}\label{pap1-limitingmoments}
\lim_{N\to\infty}\E[\tr \bA_N^k]= \begin{cases}
    0, &\text{$k$ odd}\\
    \sum\limits_{\pi\in NC_2(k)} t(G_{\gamma\pi},f,\mu_w) + \sum\limits_{l=2}^{k/2}\sum\limits_{\substack{\pi\in SS(k)\setminus NC_2(k):\\ |\gamma\pi| = l}} \lambda^{l -1- k/2}t(G_{\gamma\pi},f,\mu_w), &\text{$k$ even}
\end{cases}.
\end{align}


\

\subsection{\bf Concentration and uniqueness.} We now show a concentration result to obtain convergence in probability. 
\begin{lemma}[Concentration of trace] \label{pap1-traceconcentrate}
For all $k\geq 0$, we have that 
\[
\Var \left[\tr \bA_N^k \right] = \mathrm{O}((\lambda N)^{-1}) .
\]
\end{lemma}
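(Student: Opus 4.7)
The plan is to expand the variance using the trace formula and exploit the independence of the matrix entries. Writing $a_{\mathbf{i}} := a_{i_1 i_2} a_{i_2 i_3} \cdots a_{i_k i_1}$ for the product associated to the closed walk $\mathbf{i} = (i_1, \ldots, i_k)$, one has
\begin{equation*}
\Var[\tr \bA_N^k] = \frac{1}{N^2 \lambda^k} \sum_{\mathbf{i}, \mathbf{j}} \mathrm{Cov}(a_{\mathbf{i}}, a_{\mathbf{j}}).
\end{equation*}
Because $\{a_{ij} : i \leq j\}$ is an independent family and $a_e^2 = a_e$, we have $\E[a_{\mathbf{i}} a_{\mathbf{j}}] = \prod_{e \in E(\mathbf{i}) \cup E(\mathbf{j})} p_e$ and $\E[a_{\mathbf{i}}]\E[a_{\mathbf{j}}] = \prod_{e \in E(\mathbf{i}) \cap E(\mathbf{j})} p_e \cdot \E[a_{\mathbf{i}} a_{\mathbf{j}}]$. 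In particular, $\mathrm{Cov}(a_{\mathbf{i}}, a_{\mathbf{j}}) = 0$ whenever $E(\mathbf{i}) \cap E(\mathbf{j}) = \emptyset$, so only pairs of walks sharing at least one edge contribute, and for such pairs the crude bound $|\mathrm{Cov}(a_{\mathbf{i}}, a_{\mathbf{j}})| \leq \E[a_{\mathbf{i}} a_{\mathbf{j}}] \leq (C_f \lambda / N)^{|E(\mathbf{i}) \cup E(\mathbf{j})|}$ suffices, by boundedness of $f$.

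Next I would group the surviving pairs by the unlabelled isomorphism class of the union graph $G = (V(G), E(G))$ with $V(G) = V(\mathbf{i}) \cup V(\mathbf{j})$ and $E(G) = E(\mathbf{i}) \cup E(\mathbf{j})$, in the spirit of Lemma \ref{pap1-expectedmoments} but now for the combined double walk of total length $2k$. The number of labelled configurations realizing a given shape is at most $C_k N^{|V(G)|}$ for some constant $C_k$ depending only on $k$, so each shape contributes to the variance at most
\begin{equation*}
C_k C_f^{|E(G)|} \cdot N^{|V(G)| - |E(G)| - 2} \cdot \lambda^{|E(G)| - k}.
\end{equation*}

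The decisive input is the tree-bound already used in Lemma \ref{pap1-expectedmoments}: any single closed walk of length $k$ satisfies $|E(\mathbf{i})| \leq k/2$, with equality only when the walk is tree-like (each edge traversed exactly twice). Together with $|E(\mathbf{i}) \cap E(\mathbf{j})| \geq 1$ this yields $|E(G)| \leq k-1$. Since the two walks share an edge, $G$ is connected, whence $|V(G)| \leq |E(G)| + 1 \leq k$, with equality only when $G$ is a tree supporting two overlapping tree-like walks that overlap on a single edge. Substituting these extremal values gives the leading contribution $N^{k-(k-1)-2}\,\lambda^{(k-1)-k} = (\lambda N)^{-1}$. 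Any configuration with a non-tree walk, with a cycle in $G$, or with $|E(\mathbf{i}) \cap E(\mathbf{j})| \geq 2$, loses at least one further factor of $N^{-1}$ or $\lambda^{-1}$, and since the number of shapes is a $k$-dependent constant, summing over them gives the claimed $\mathrm{O}((\lambda N)^{-1})$.

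The main obstacle I anticipate is the clean bookkeeping of the joint topology of the two closed walks and the combinatorial constants counting their labelled realizations. The most transparent route is to enumerate pairs $(\mathbf{i}, \mathbf{j})$ through a refined partition of $[2k]$ recording all coincidences among $(i_1, \ldots, i_k, j_1, \ldots, j_k)$, and to adapt Properties 1--3 together with Proposition \ref{pap1-simplesymmetrictree} to this doubled setting; the constraint $E(\mathbf{i}) \cap E(\mathbf{j}) \neq \emptyset$ then simply forces the combined partition to be strictly coarser than the disjoint pair, which is precisely what costs the extra factor of $N^{-1}$ compared to the square $\E[\tr \bA_N^k]^2$.
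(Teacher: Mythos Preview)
Your strategy is the same as the paper's: expand the variance as a double sum over pairs $(\mathbf{i},\mathbf{j})$ of closed walks, note that pairs with $E(\mathbf{i})\cap E(\mathbf{j})=\emptyset$ have zero covariance, and bound each remaining combinatorial shape by a constant times $N^{|V(G)|-|E(G)|-2}\,\lambda^{|E(G)|-k}$, where $G$ is the union graph.

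There is, however, a genuine error in the key step. Your assertion that ``any single closed walk of length $k$ satisfies $|E(\mathbf{i})|\le k/2$'' is false: a $k$-cycle is a closed walk of length $k$ with $k$ distinct edges. The correct statement is that a closed walk \emph{whose support is a tree} traverses each of its edges an even number of times, and only then does $|E(\mathbf{i})|\le k/2$ follow. Consequently your deduction $|E(G)|\le k-1$ is valid only when the union graph $G$ is itself a tree. The repair is to make the case split \emph{before} invoking the edge count: if $G$ contains a cycle then $|V(G)|\le |E(G)|$, so the contribution is at most $N^{-2}\lambda^{|E(G)|-k}$ with $|E(G)|\le 2k-1$, which for fixed $\lambda$ is $O(N^{-2})$; if $G$ is a tree then both walks are tree-like and your argument goes through verbatim to give the leading $(\lambda N)^{-1}$. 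Your closing sentence about non-tree configurations losing a factor of $N^{-1}$ is exactly this observation, but as currently written it comes after the false universal edge bound has already been used, so the logical order must be reversed. The paper's own proof follows the same outline but with the weaker bound $|E(G)|\le k$, obtained from the claim that each edge in the combined walk is traversed at least twice (Remark~\ref{pap1-oneedgeremark}); it too is terse about the non-tree case and concludes only $O(N^{-1})$.
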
 
\begin{proof}
We shall proceed to compute the variance 
\[
\Var  \left[  \tr \bA_N^k 	\right].
\]
Let $\mathbf{i}$ and $\mathbf{i'}$ denote the tuples 
\[
\mathbf{i} = \{i_1,\ldots,i_k\}, \hspace{0.2cm} \mathbf{i'} = \{i_{k+1},\ldots,i_{2k}\}
\]
and denote by $P(\mathbf{i})$ the expectation 
\[
P(\mathbf{i}) = \E[a_{i_1i_2} a_{i_2i_3}\ldots a_{i_ki_1}].
\]
Similarly, we have
\[
P(\mathbf{i'}) = \E[a_{i_{k+1}i_{k+2}} a_{i_{k+2}i_{k+3}}\ldots a_{i_{2k}i_{k+1}}].
\]
For the tuple $\mathbf{i}$, we can define a closed walk as in the proof of Lemma \ref{pap1-expectedmoments} to get a graph $G(\mathbf{i}):= (V(\mathbf{i}),\, E(\mathbf{i}))$. In the same spirit, one can define $G(\mathbf{i},\mathbf{i'}) = (V(\mathbf{i},\mathbf{i'}),E(\mathbf{i},\mathbf{i'}))$, with the closed walk now performed as 
\[
1\to 2\to\ldots k\to 1, k+1\to k+2\to\ldots 2k\to k+1,
\]
where the jump from 1 to $k+1$ is without an edge. Then, we can define
\[
P(\mathbf{i},\mathbf{i'}) = \E[a_{i_1i_2} a_{i_2i_3} \ldots a_{i_ki_1} a_{i_{k+1}i_{k+2}}\ldots a_{i_{2k}i_{k+1}}] .
\]
With this notation set up, one can see that 
\begin{equation}\label{pap1-concentrationvariance}
\begin{split}
\Var \left[	\tr \bA_N^k\right] &= \frac{1}{N^2} \left[ \E [ (\Tr(\bA_N^k)^2] - (\E[\Tr(\bA_N^k)])^2 \right] \\
&= \frac{1}{N^2\lambda^k} \sum_{i_1,i_2,\ldots,i_k,i_{k+1},\ldots,i_{2k}=1}^N P(\mathbf{i},\mathbf{i'}) - P(\mathbf{i})P(\mathbf{i'}).
\end{split}
\end{equation}
We remark here that the construction of the graph $G(\mathbf{i},\mathbf{i'})$ is similar to how we did in Lemma \ref{pap1-expectedmoments}, with the essential difference being the closed walk structure over two separate $k-$tuples. 

Suppose that $E(\mathbf{i})\cap E(\mathbf{i'}) = \phi$. Then by independence, \eqref{pap1-concentrationvariance} becomes 0. Thus, we must have $E(\mathbf{i})\cap E(\mathbf{i'}) \neq \phi$. Moreover, due to remark \ref{pap1-oneedgeremark}, each term must appear at least twice in $P(\mathbf{i},\mathbf{i'})$, that is, each edge in $E(\mathbf{i},\mathbf{i'})$ is traversed at least twice. This implies that the maximum number of edges our graph can have is $k$.

Next, note that the only way the graph $G(\mathbf{i},\mathbf{i'})$ will be disconnected is when the closed walk over the two $k-$ tuples yields two disjoint graphs, and thus we once again obtain $P(\mathbf{i},\mathbf{i'}) = P(\mathbf{i})P(\mathbf{i'})$. 

Thus, our computation boils down to the case where $G(\mathbf{i},\mathbf{i'})$ is a connected graph, with each edge appearing at least twice, and $E(\mathbf{i})\cap E(\mathbf{i'}) \neq \phi$. Note that one can have $G(\mathbf{i},\mathbf{i'})$ to be connected and still have $E(\mathbf{i})\cap E(\mathbf{i'})=\phi$, for example when $i_1$ and $i_{k+1}$ are collapsed into the same vertex. 
This gives us that $|V(\mathbf{i},\mathbf{i'})| \leq |E(\mathbf{i},\mathbf{i'})| +1 \leq k+1$. Using $|f|\leq C_f$ gives us that 
\[
\Var \left[	\tr \bA_N^k\right] \leq C_f\frac{1}{N^2\lambda^k}N^{|V|}\left(  \frac{\lambda}{N}  \right)^{|E|} = C_f\lambda^{|E|-k}N^{|V|-|E|-2} = \bigO(N^{-1}).
\]
This completes the proof. 
\end{proof} 
An immediate consequence using Chebychev's inequality is that the moments concentrate around their mean as $N\to\infty$. In other words, for all $k\geq 1$,
\[
\lim_{N\to\infty}\tr \bA_N^k = m_k(\mu_{\lambda}) \hspace{0.2cm}\text{in probability},
\]
where $m_k(\mu_{\lambda})$ are as in \eqref{pap1-limitingmomentexpression}. To conclude Theorem \ref{pap1-momenttheorem}, we now further analyse the sequence $\{m_k\}_{k\geq 0}$, and show that it is unique for the measure $\mu_{\lambda}$. A measure $\mu$ is said to be uniquely determined by its moment sequence $\{m_k\}_{k\geq0}$ if the following holds (Carleman's condition):
\begin{align}\label{eq:Carlemancondition}
    \sum_{k\geq 0} m_{2k}^{-1/2k} = \infty.
\end{align}
\begin{lemma}[Uniqueness of moments]\label{pap1-Carleman}
For $\lambda$ bounded away from 0, that is, $\lambda  >0$, the moments uniquely determine the limiting spectral measure.  
\end{lemma}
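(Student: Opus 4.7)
The plan is to verify Carleman's condition $\sum_{k\geq 1} m_{2k}(\mu_\lambda)^{-1/(2k)} = \infty$ by establishing an exponential upper bound of the form $m_{2k}(\mu_\lambda) \leq D^k$ for some constant $D = D(\lambda, C_f) > 0$. Such a bound is more than sufficient: it immediately gives $m_{2k}^{-1/(2k)} \geq D^{-1/2}$ uniformly in $k$, and the Carleman series diverges trivially. Uniqueness of $\mu_\lambda$ then follows from the classical Hamburger moment problem.

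To obtain this exponential bound, I would read off directly from the identification in \eqref{pap1-limitingmomentexpression}. Two ingredients enter. First, the boundedness assumption $\|f\|_\infty \leq C_f$ together with the fact (from Proposition \ref{pap1-simplesymmetrictree}) that $G_{\gamma\pi}$ is a tree with exactly $|\gamma\pi|-1$ edges for every contributing $\pi\in SS(2k)$ gives
\[
t(G_{\gamma\pi}, f, \mu_w) \leq C_f^{\,|\gamma\pi|-1}.
\]
Second, since $|\gamma\pi|-1-k \in [1-k,0]$, the sparsity factor satisfies $\lambda^{|\gamma\pi|-1-k} \leq \max(1,\lambda^{-1})^{k-1}$. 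What remains is to control the cardinality $|SS(2k)|$, for which I would invoke the bijection of \cite{Bose-Saha-Sen-Sen}, Lemma 5.1, between $SS(2k)$ and rooted planar trees with $k$ edges (coloured/merged appropriately), producing a bound of Catalan type $|SS(2k)| \leq C^k$ for some absolute $C$.

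An alternative, perhaps cleaner route is to bypass the combinatorial bookkeeping by returning to the pre-limit trace estimate \eqref{pap1-traceRMTeqn}. Using $p_{ij} \leq \varepsilon_N C_f$ and the observation that each edge of a closed walk must be traversed at least twice for a non-vanishing contribution, one sees that $\E[\tr \bA_N^{2k}]$ is dominated, for large $N$, by tree-type closed walks of length $2k$, of which there are at most $N^{k+1} C_k$ counting vertex labellings, with each contributing at most $\lambda^{-k}(\varepsilon_N C_f)^k$ after the scaling in \eqref{eq:adjacency}. Contributions from walks with fewer vertices are subsumed into the $\max(1,\lambda^{-1})^k$ factor. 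Using $N\varepsilon_N \to \lambda$ and $C_k \leq 4^k$, I arrive at $m_{2k}(\mu_\lambda) \leq (4 C_f \max(1,\lambda^{-1}))^k$ up to at most a polynomial prefactor in $k$.

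I expect no real obstacle here: the only piece requiring attention is the Catalan-type counting, but this is routine and gives a very large cushion, since we need only $m_{2k}^{1/(2k)} = o(k)$ whereas boundedness is obtained. Once the exponential bound is in hand, Carleman's condition is immediate and the uniqueness of $\mu_\lambda$ is established, completing the proof of Theorem \ref{pap1-momenttheorem}(a).
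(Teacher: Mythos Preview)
Your proposal rests on an exponential bound $m_{2k}(\mu_\lambda)\le D^k$, and in particular on the claim $|SS(2k)|\le C^k$ for some absolute $C$. This is false: $|SS(2k)|$ grows super-exponentially in $k$. To see it, take the homogeneous case $f\equiv 1$, $\lambda=1$, where \eqref{pap1-limitingmomentexpression} gives $m_{2k}(\mu_1)=|SS(2k)|$, and $\mu_1$ is the spectral measure of the unscaled sparse Erd\H{o}s--R\'enyi graph with $p=1/N$. That measure has unbounded support (the underlying Galton--Watson tree has unbounded degree), so $|SS(2k)|^{1/(2k)}\to\infty$. Concretely, already the walks of length $2k$ that stay within depth one of the root contribute $\E[D^k]=B_k$ (the $k$-th Bell number) where $D\sim\mathrm{Poi}(1)$, and $B_k$ is super-exponential. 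The bijection in \cite{Bose-Saha-Sen-Sen} is with \emph{coloured} rooted trees, and it is precisely the colouring that inflates the count beyond Catalan.

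Your alternative route has the same defect: the assertion that tree-type closed walks of length $2k$ number at most $N^{k+1}C_k$ holds only for walks traversing each edge exactly twice (these are the $NC_2(2k)$ contributions). Walks traversing some edge four or more times sit on trees with fewer vertices but come in super-exponentially many combinatorial types; the factor $\max(1,\lambda^{-1})^{k}$ controls the $\lambda$-weight of each such walk, not their number.

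The paper's proof accepts this growth: it uses the crude bound $|SS(2k)|\le |P(2k)|\le (2k)^{2k}$, obtaining $m_{2k}^{-1/(2k)}\ge c/k$ (up to constants depending on $C_f$ and $\lambda$), so the Carleman series diverges only like the harmonic series. Your idea is salvageable if you replace the sought-for exponential bound by any bound of the shape $m_{2k}\le (Ck)^{2k}$, which is both true and still ample for Carleman.
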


\begin{proof}
Let $m_k$ denote the $k^{\text{th}}$ moment. Since $f$ is bounded, we have  
\[
\begin{split}
m_{2k} &= \sum_{\pi \in SS(2k)} \lambda^{|\gamma\pi| - 1 - k} \int_{[0,1]^{|\gamma\pi|}} \prod_{(ab) \in E_{\gamma\pi}} f(x_a,x_b) \prod_{i=1}^{|\gamma\pi|} \mu_w(\dd x_i) \\
&\leq \sum_{\pi \in SS(2k)} C_f^{|\gamma\pi|}\lambda^{|\gamma\pi| - 1 -k} \\
&= \sum_{l=2}^{k+1}\sum_{\pi \in SS(2k) : |\gamma\pi|=l} C_f^l\lambda^{l-1-k} ,\end{split}
\]
Let $A_k$ be defined as 
\[
A_k = \begin{cases}
1,& \hspace{0.3cm} \text{if $\lambda \geq 1$},\\
\lambda^{-k}, &\hspace{0.3cm} \text{if $1>\lambda>0$}.
\end{cases}
\]
Then,
\[
\begin{split}
m_{2k} &\leq C_f^{k+1}A_k\sum_{l=2}^{k+1} |\{\pi\in SS(2k) : |\gamma\pi| = l\}| \\
&\leq A_kC_f^{k+1}|\{SS(2k)\}| \\
&\leq A_kC_f^{k+1} (2k)^{2k},
\end{split}
\]
where the last inequality follows since $SS(2k) \subset P(2k)$ and $|P(2k)|$ is bounded by $2k^{2k}$. Thus, 
\[
m_{2k}^{-1/2k} \geq \frac{1}{2k\sqrt{C_f}}.\frac{1}{(A_kC_f)^{\frac{1}{2k}}} 
\]
So, we have the series $\sum_{k\geq1} m_{2k}^{-1/2k}$ to be lower bounded by $\sum_{k\geq1} a_k$, where
\[
a_k = \frac{1}{2k\sqrt{C_f}}.\frac{1}{(A_kC_f)^{\frac{1}{2k}}} = \frac{1}{C_1k\e^{\frac{1}{2k}\log(A_kC_f)}}.
\]
Thus, 
\begin{align*}
    a_k = \begin{cases}
        \frac{\e^{-C_2/2k}}{C_1k}, &\text{for $\lambda\geq 1$},\\
        \frac{\sqrt{\lambda}\e^{-C_2/2k}}{C_1k}, &\text{for $\lambda<1$}.
    \end{cases}
\end{align*}
Since $\e^{-x} > 1- x$, we see that the series $\sum_{k\geq 1}a_k$ diverges, and consequently, 
\[
\sum_{k\geq 0} m_{2k}^{-1/2k} = \infty.
\]
\end{proof}

\section{\bf Analytic approach: Proof of Theorem \ref{pap1-Resolventtheorem}}\label{pap1-Resolventsection}
\subsection{\bf Resolvent and Stieltjes Transform}
We fix a $z\in\C^+$ throughout this argument, with $\Im(z) = \eta>0$. Recall that resolvent is given by 
\[
\R_{\bA_N}(z) := (\bA_N - zI)^{-1} , \, z\in \C^{+}.
\]
The Stieltjes transform of the empirical spectral distribution of $\bA_N$ is given by
\begin{equation}\label{eq:defstieltjes}
  \St_{\bA_N}(z) = \int_{\mathbb{R}}\frac{1}{x-z}\ESD(\bA_N)(\dd x) = \tr(\R_{\bA_N}(z)),
\end{equation}

where $\tr$ denotes the normalized trace.

\begin{lemma}[Resolvent Properties]\label{resolventproperties}
 For any $z \in \mathbb{C}^{+}, 1 \leq i, j \leq N$, the following properties are well-known for the resolvent $\R_{\bA}$ of an $N\times N$ matrix $\bA$. 
 \begin{itemize}
\item[(i)] {\bf Analytic:} $z \mapsto \R_{\bA}(z)(i,j)$ is an analytic function on $\mathbb{C}^{+} \rightarrow \mathbb{C}^{+}$.
\item[(ii)] {\bf Bounded :} $\|\R_{\bA}(z)\|_{\text{op}} \leq \Im(z)^{-1}$, where $\|\cdot\|_{\text{op}}$ denotes the operator norm. 
\item[(iii)] {\bf Normal :} $\R_{\bA}(z) \R_{\bA}(z)^*=\R_{A}(z)^* \R_{A}(z)$.
\item[(iv)]  {\bf Diagonals are bounded:} $|\R_{\bA}(z) (i,j)|\le \Im(z)^{-1}.$
\item[(v)] {\bf Trace bounded:} $ |\tr(\R_{\bA}(z))| \leq \Im(z)^{-1}$. In particular, 
    \[
    \left|\tr(\R^p_{\bA}(z))\right| \leq \Im(z)^{-p}, \text{ for any $p\ge 1$.}
    \]  
\end{itemize}
\end{lemma}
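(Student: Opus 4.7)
The plan is to exploit the self-adjointness of $\bA_N$ and apply the spectral theorem throughout. Since the adjacency matrix is real symmetric, it admits a spectral decomposition $\bA = \sum_{k=1}^N \lambda_k v_k v_k^*$ with real eigenvalues $\lambda_1,\ldots,\lambda_N$ and orthonormal eigenvectors $v_1,\ldots,v_N$. By the functional calculus, this gives the explicit representation
\begin{equation*}
\R_{\bA}(z) = \sum_{k=1}^N \frac{1}{\lambda_k - z}\, v_k v_k^*, \qquad z \in \C^+,
\end{equation*}
which will be the single workhorse for all five parts.

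For (i), I would write each entry as the finite sum $\R_{\bA}(z)(i,j) = \sum_k v_k(i)\overline{v_k(j)}/(\lambda_k - z)$. Since $\lambda_k \in \R$ and $z \in \C^+$, each summand is analytic on $\C^+$, and the sum is finite, hence analytic. For (ii) and (iv), applying the spectral representation in operator norm gives
\begin{equation*}
\|\R_{\bA}(z)\|_{\mathrm{op}} = \max_{1\le k \le N}\bigl|\lambda_k - z\bigr|^{-1} \le (\Im z)^{-1},
\end{equation*}
and (iv) follows since $|\R_{\bA}(z)(i,j)| = |\langle e_i, \R_{\bA}(z) e_j\rangle| \le \|\R_{\bA}(z)\|_{\mathrm{op}}$.

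For (iii), I would observe that $\R_{\bA}(z)^* = \R_{\bA}(\bar z)$ because $\bA$ is self-adjoint: $((\bA - z I)^{-1})^* = (\bA - \bar z I)^{-1}$. Both $\R_{\bA}(z)$ and $\R_{\bA}(\bar z)$ are functions of the same self-adjoint operator $\bA$ (equivalently, both are polynomials in $\bA$ via the functional calculus, or one can invoke the resolvent identity $\R_{\bA}(z)\R_{\bA}(\bar z) - \R_{\bA}(\bar z)\R_{\bA}(z) = (z-\bar z)[\R_{\bA}(z),\R_{\bA}(\bar z)] = 0$), and therefore commute. Normality follows.

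For (v), I would apply the spectral decomposition to $\R_{\bA}^p(z) = \sum_k (\lambda_k - z)^{-p} v_k v_k^*$ and take the normalised trace, yielding $\tr \R_{\bA}^p(z) = N^{-1}\sum_k (\lambda_k - z)^{-p}$. The triangle inequality combined with $|\lambda_k - z|^{-1} \le (\Im z)^{-1}$ gives $|\tr \R_{\bA}^p(z)| \le (\Im z)^{-p}$. Since every step is a direct application of the spectral theorem and elementary inequalities, there is no real obstacle; the only mild care needed is in (iii), where one should make explicit the identification $\R_{\bA}(z)^* = \R_{\bA}(\bar z)$ before invoking commutativity.
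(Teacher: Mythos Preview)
Your proof is correct and essentially the same approach as the paper's, just more explicit: the paper refers the first three properties to \cite[Chapter~3]{bordenave2019lecture}, then derives (iv) from the operator-norm bound exactly as you do, and states that (v) follows from (iv). Your self-contained spectral-theorem argument covers all five points cleanly. One small remark: the parenthetical ``resolvent identity'' equation you wrote in (iii), $\R_{\bA}(z)\R_{\bA}(\bar z) - \R_{\bA}(\bar z)\R_{\bA}(z) = (z-\bar z)[\R_{\bA}(z),\R_{\bA}(\bar z)] = 0$, is circular as stated; the correct route via the resolvent identity is $\R_{\bA}(z)\R_{\bA}(w) = (z-w)^{-1}(\R_{\bA}(z)-\R_{\bA}(w)) = \R_{\bA}(w)\R_{\bA}(z)$, but your primary justification (functions of the same self-adjoint operator commute) is already sufficient.
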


For the first three properties see \cite[Chapter 3]{bordenave2019lecture}. Note that the property (iv) follows from (iii) by the following argument:
$$|\R_{\bA}(z) (i,j)| \le | \langle \delta_i,\,  \R_{\bA}(z)\delta_j\rangle|\le \sup_{v:\|v\|=1} |\langle \delta_i, \R_{\bA}(z)\delta_j\rangle|= \|\R_{\bA}(z)\|_{\text{op}}.$$ 
The last property (v) follows from (iv). We now state the Ward's identity, for which we refer the reader to \citet[Lemma 8.3]{Laszlobook}.

\begin{lemma}[Ward's identity]\label{pap1-Wald}
Let $\bA$ be a Hermitian matrix and $\R_{\bA}$ be the resolvent. Let $z\in \mathbb{C}^+$. Then for any fixed $k$, we have 
\[
\sum_{l\neq k}|R_{\bA}(l,k)|^2 = \frac{1}{\eta}\Im(R_{\bA}(k,k)). 
\]
\end{lemma}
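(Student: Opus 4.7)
My plan is to prove Ward's identity by combining a short algebraic manipulation of the resolvent with the fact that $\bA$ is Hermitian (so $R_{\bA}(z)^{*}=R_{\bA}(\bar z)$). The key input is the resolvent identity
\begin{equation*}
R_{\bA}(z) - R_{\bA}(z)^{*} = R_{\bA}(z)\bigl[(\bA - \bar z I)-(\bA - zI)\bigr] R_{\bA}(z)^{*} = 2\ota\eta\, R_{\bA}(z)R_{\bA}(z)^{*},
\end{equation*}
which follows from writing $R_{\bA}(z)^{-1}-(R_{\bA}(z)^{*})^{-1} = \bar z - z$ and multiplying on either side by $R_{\bA}(z)$ and $R_{\bA}(z)^{*}$.

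Next I would take the $(k,k)$-entry of both sides. The left-hand side produces $2\ota\,\Im R_{\bA}(k,k)$. For the right-hand side, expanding the matrix product and using $R_{\bA}(z)^{*}(l,k) = \overline{R_{\bA}(z)(k,l)}$ gives
\begin{equation*}
\bigl(R_{\bA}(z)R_{\bA}(z)^{*}\bigr)(k,k) = \sum_l R_{\bA}(z)(k,l)\,\overline{R_{\bA}(z)(k,l)} = \sum_l |R_{\bA}(k,l)|^{2}.
\end{equation*}
Since $\bA$ is real symmetric (as the adjacency matrix of a graph), $R_{\bA}(z)$ is also symmetric as a matrix, so $R_{\bA}(k,l)=R_{\bA}(l,k)$; dividing by $2\ota\eta$ one obtains the identity $\eta^{-1}\Im R_{\bA}(k,k)=\sum_l|R_{\bA}(l,k)|^{2}$, which after separating (or absorbing) the $l=k$ contribution matches the form stated in \cite[Lemma 8.3]{Laszlobook}.

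There is no substantive obstacle here: the whole argument is linear-algebraic, uses only the self-adjoint structure of $\bA$ and the positivity $\eta>0$, and is independent of any probabilistic features of the model. As a cross-check, the same identity drops out of the spectral decomposition $\bA=\sum_\alpha \lambda_\alpha u_\alpha u_\alpha^{*}$: writing $R_{\bA}(l,k)=\sum_\alpha u_\alpha(l)\overline{u_\alpha(k)}/(\lambda_\alpha - z)$, expanding $|R_{\bA}(l,k)|^{2}$, and applying the unitarity relation $\sum_l u_\alpha(l)\overline{u_\beta(l)}=\delta_{\alpha\beta}$ collapses the double sum to $\sum_\alpha |u_\alpha(k)|^{2}/|\lambda_\alpha - z|^{2}$, which is precisely $\eta^{-1}\Im R_{\bA}(k,k)$ by the definition of the Stieltjes transform applied to the spectral measure at vertex $k$. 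Given this, I would simply present the short resolvent-identity derivation and quote the result.
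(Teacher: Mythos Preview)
Your argument is correct and is the standard resolvent-identity derivation of Ward's identity. The paper does not actually give a proof of this lemma; it merely states the result and refers the reader to \cite[Lemma~8.3]{Laszlobook}, so there is nothing to compare against. One small remark: the identity you derive is $\sum_{l}|R_{\bA}(l,k)|^{2}=\eta^{-1}\Im R_{\bA}(k,k)$ with the full sum over $l$, whereas the paper's statement restricts to $l\neq k$; the latter differs from the former by the nonnegative term $|R_{\bA}(k,k)|^{2}$, so as written the paper's version is an inequality rather than an equality (and indeed only the inequality direction is used in the paper's applications). Your handling of this point is fine.
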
 

Since we have already shown in the previous section $\lim_{n\to \infty} \ESD(\bA_N)= \mu_{\lambda}$ weakly in probability and hence it follows that for any $z\in \mathbb C^+$
$$\lim_{N\to \infty}\St_{\bA_N}(z) \to S_{\mu_{\lambda}}(z). $$
Due to the involved structure of the moments, it is not immediately evident what the limiting Stieltjes transform looks like. 

Recall the notation of \emph{expected} empirical spectral distribution of $\bA_N$ from \eqref{eq:mudef}. Let $\bar\St_{\bA_N}(z) $ denote the Stieltjes transform of $\bar\mu_{N,\lambda}$. Notice that
$\bar\St_{{\bA}_N}(z)= \E[\St_{\bA_N}(z)]$. It is known that if a measure $\mu_N$ converges weakly in probability to a measure $\mu$, then the corresponding Stieltjes transforms converge. In particular, we have the following lemma. 
\begin{lemma}\label{stieltjesconvergencefact} \cite[Theorem 2.4.4]{anderson2010introduction}
  A sequence of measures $\mu_N$ converge weakly in probability to a measure $\mu$ if and only if $\St_{\mu_N}(z)$ converges in probability to $\St_{\mu}(z)$ for each $z\in\C^+$.
\end{lemma}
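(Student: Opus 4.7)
The plan is to prove both directions of the equivalence, relying on the characterisation of weak convergence in probability via test functions and on the classical uniqueness theorem for Stieltjes transforms.

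For the forward implication, I would fix $z \in \C^+$ with $\eta = \Im(z) > 0$ and observe that the function $f_z(x) = 1/(x-z)$ is continuous on $\mathbb{R}$ and bounded in modulus by $1/\eta$. Since weak convergence in probability of $\mu_N$ to $\mu$ is equivalent to the assertion that $\int g\, d\mu_N \to \int g\, d\mu$ in probability for every bounded continuous $g:\mathbb{R}\to\C$, applying this criterion to the real and imaginary parts of $f_z$ immediately yields $\St_{\mu_N}(z) \to \St_\mu(z)$ in probability. This direction is essentially immediate.

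For the reverse implication, which is the substantive direction, the plan is to combine a sub-subsequence extraction with tightness and the uniqueness of Stieltjes transforms. Since convergence in probability is equivalent to the property that every subsequence admits a further almost-surely convergent sub-subsequence with the same limit, fix an arbitrary subsequence $\{N_k\}$. Via a diagonal extraction over a countable dense set $Z \subset \C^+$, I would obtain a further subsequence $\{N_{k_j}\}$ along which $\St_{\mu_{N_{k_j}}}(z) \to \St_\mu(z)$ almost surely, simultaneously for every $z \in Z$. Almost-sure tightness of $\{\mu_{N_{k_j}}\}$ can then be extracted from the identity $\eta\cdot \Im \St_\nu(i\eta) \to \nu(\mathbb{R})$ as $\eta \to \infty$ (valid for any finite Borel measure $\nu$), combined with the pointwise convergence at points $z = i\eta$ with $\eta$ large. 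By Helly's selection theorem applied path-wise, one may pass to a further sub-subsequence along which $\mu_{N_{k_{j_l}}}$ converges vaguely almost surely to some random sub-probability measure $\nu$. Applying the forward implication to this sub-sub-subsequence gives $\St_\nu(z) = \St_\mu(z)$ almost surely for each $z \in Z$, and by continuity of the Stieltjes transform in $z$ this identity extends to all $z \in \C^+$. The classical uniqueness of the Stieltjes transform then forces $\nu = \mu$ almost surely, which together with $\mu(\mathbb{R})=1$ also ensures that no mass has been lost, so the vague convergence is in fact weak convergence.

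The main obstacle is ensuring that no mass escapes to infinity in the subsequential limit, i.e., that $\nu$ is almost surely a probability measure rather than a genuine sub-probability measure. This requires a uniform control on $\eta\cdot \Im \St_{\mu_N}(i\eta)$ for $\eta$ large, deduced from the hypothesis applied at $z = i\eta$ together with the corresponding limiting identity for $\mu$. The remaining ingredients -- Helly's selection, continuity of the Stieltjes transform, and its uniqueness on finite Borel measures -- are standard and can be invoked essentially off the shelf, which is presumably why the authors opt to simply cite \cite[Theorem 2.4.4]{anderson2010introduction}.
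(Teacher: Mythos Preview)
The paper does not provide its own proof of this lemma; it is stated with a direct citation to \cite[Theorem 2.4.4]{anderson2010introduction} and used as a black box. Your proof sketch is correct and is essentially the standard argument one finds in that reference: the forward direction is immediate from boundedness and continuity of $x\mapsto (x-z)^{-1}$, and the reverse direction proceeds via sub-subsequences, Helly's selection, and uniqueness of the Stieltjes transform. One minor simplification: since $x\mapsto (x-z)^{-1}$ lies in $C_0(\mathbb{R})$, vague convergence already yields $\St_\nu(z)=\St_\mu(z)$ directly, so you do not need to establish tightness before invoking the forward implication; the identification $\nu=\mu$ then automatically forces $\nu(\mathbb{R})=1$ and upgrades vague to weak convergence.
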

Thus, we compute an expression for the expected Stieltjes transform $\St_{\bar{\bA}_N}$, and using convergence in probability from Theorem \ref{pap1-momenttheorem}, we can claim that the Stieltjes transform $\St_{\bA_N}(z)$ converges in probability to the same expression. For ease of notation we shall denote by $r_{kk}^N(z):= \R_{\bA_N}(z)(k,k)$ for $1\le k\le N$.

The following identity can be found in \cite{abramowitz+stegun}. For any complex number $z\in \C^+$, we have for all $u\geq 0$, 
    \begin{equation}\label{pap1-exponentialBesselidentity}
    \e^{\ota uz} = 1 - \sqrt{u}\int_0^{\infty}\frac{\J(2\sqrt{uv})}{\sqrt{v}}\e^{-\ota vz^{-1}}\dd{v},
    \end{equation}
    where $\J(x)$ is the first-order Bessel function of the first kind given by \eqref{eq:bessel}. Note that for all $x\geq 0$, $|\J(x)|\leq 1$ (see \cite[Chapter 9]{abramowitz+stegun}). We know that resolvent maps the upper half complex plane to the upper half complex plane. Thus, we begin by fixing $r^N_{jj}(z)$, the $j^{\text{th}}$ diagonal entry of the $N\times N$ resolvent matrix, as our complex variable in $\C^+$. So we can get
    \begin{equation}\label{eq:expresolvent}
        \e^{\ota u r_{jj}^N(z)}= 1 - \sqrt{u}\int_0^{\infty}\frac{\J(2\sqrt{uv})}{\sqrt{v}}\e^{-\ota v(r_{jj}^N)^{-1}}\dd{v}.
    \end{equation}
    If we look at $\sum_{j=1}^N \e^{\ota u r_{jj}^N(z)}$ then the relation between the Stieltjes transform and the above equation becomes apparent. It turns out that
    \begin{equation}\label{eq:derivativeST}
        \St_{\bA_N}(z)= \left. \frac{\partial}{\partial u}\frac{1}{N}\sum_{j=1}^N \e^{\ota u r_{jj}^N(z)} \right|_{u=0}.
    \end{equation}
    
    To understand the Stieltjes transform we will first try to understand the behaviour of \eqref{eq:expresolvent}. We will adapt the approach of \cite{KSV2004}. For ease of notation, for what follows, $\|\cdot\|$ will denote the norm $\|\cdot\|_{\Ba}$ as defined in \eqref{pap1-Banachdefn}, unless stated otherwise. 

    \begin{proposition}\label{pap1-analyticstep1}
        Let $r^N_{jj}:=r^N_{jj}(z)$ denote the $j^{\text{th}}$ diagonal entry of the resolvent $\R_{\bA_N}(z)$. Let 
        \begin{equation}\label{eq:defd_j}
        d_j = \frac{1}{N}\sum\limits_{k=1}^Nf(w_j,w_k)
        \end{equation}
        and for any $b>0$ define the function $g_N: (0,\infty)\times (0,\infty)\times \C^+\to \C$ as follows
        \begin{equation}\label{def:g_N}
        g_N(x,b,z) := \frac{1}{N}\sum\limits_{k=1}^N f(x,w_k)\e^{\ota b r^N_{kk}(z)}.
        \end{equation}
         Then, for any $z\in\C^+$,
        \begin{equation}\label{pap1-expBesselinterpolated}
        \E[\e^{r^N_{jj}}] = 1 - \e^{-\lambda d_j}\Khorunzkyintegral\e^{\ota vz}\E\left[\e^{\lambda g_N\left(w_j,\frac{v}{\lambda},z\right)}\right] \dd{v} + q_{N,\lambda}(u,z),
        \end{equation}
        where $q_{N,\lambda}(u,z) = \bigO\left(  \frac{\lambda\sqrt{u}}{\eta^{5/2}\sqrt{N}}\right)$. 
    \end{proposition}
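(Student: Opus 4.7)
The plan is to apply the Schur complement formula for diagonal resolvent entries, combine it with the Bessel identity \eqref{eq:expresolvent}, and exploit the independence between the $j$-th row of $\bA_N$ and the resolvent of the corresponding minor. By Lemma \ref{pap1-zerodiagonal} we may assume the diagonal of $\bA_N$ vanishes, and the Schur complement then gives
\begin{align*}
    (r^N_{jj}(z))^{-1} = -z - \sum_{k,l \neq j} \bA_N(j,k)\,\bA_N(j,l)\, \R^{(j)}_{kl}(z),
\end{align*}
where $\R^{(j)}(z) := (\bA_N^{(j)} - zI)^{-1}$ is the resolvent of the minor $\bA_N^{(j)}$ obtained by deleting row/column $j$; crucially, $\R^{(j)}$ is independent of $\{\bA_N(j,k)\}_{k\neq j}$. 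Substituting into \eqref{eq:expresolvent}, taking expectation, and factoring $\e^{\ota v z}$ out of the integrand reduces the proof to the approximation
\begin{align*}
    \E[\e^{\ota v Q}] = \e^{-\lambda d_j}\,\E\bigl[\e^{\lambda g_N(w_j, v/\lambda, z)}\bigr] + \text{error},
\end{align*}
where $Q := \sum_{k,l\neq j} \bA_N(j,k)\bA_N(j,l)\,\R^{(j)}_{kl}(z)$ is split as $Q = Q_d + Q_o$ into diagonal ($k=l$) and off-diagonal parts.

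For the diagonal part, $\bA_N(j,k)^2 = \M_N(j,k)/\lambda$ with $\M_N(j,k) \sim \mathrm{Ber}(p_{jk})$ independent of $\R^{(j)}$, so conditioning on $\R^{(j)}$ yields the explicit product
\begin{align*}
    \E\bigl[\e^{\ota v Q_d} \mid \R^{(j)}\bigr] = \prod_{k\neq j}\bigl[1 + p_{jk}\bigl(\e^{\ota v \R^{(j)}_{kk}(z)/\lambda} - 1\bigr)\bigr].
\end{align*}
Taking logarithms and using $p_{jk} = \bigO(\lambda/N)$ together with $|\e^{\ota v \R^{(j)}_{kk}/\lambda} - 1| \leq 2$, a Taylor expansion gives
\begin{align*}
    \sum_{k\neq j}\log[\,\cdot\,] = \sum_{k\neq j} p_{jk}\bigl(\e^{\ota v \R^{(j)}_{kk}(z)/\lambda} - 1\bigr) + \bigO(\lambda^2/N).
\end{align*}
Substituting $p_{jk} = \lambda f(w_j, w_k)/N + \bigO(N^{-2})$, the leading term is $-\lambda d_j^{(j)} + \lambda g_N^{(j)}(w_j, v/\lambda, z)$, where the superscript $(j)$ denotes the restricted sum and use of $\R^{(j)}_{kk}$. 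Standard rank-one perturbation estimates $|r^N_{kk}(z) - \R^{(j)}_{kk}(z)| = \bigO(1/(N\eta^2))$ allow one to replace $d_j^{(j)}, g_N^{(j)}$ by $d_j, g_N(w_j, v/\lambda, z)$ at an $\bigO(1/(N\eta^2))$ cost; taking expectation then produces the claimed main term.

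For the off-diagonal part, Ward's identity (Lemma \ref{pap1-Wald}) gives $\sum_{k,l \neq j}|\R^{(j)}_{kl}|^2 \leq N/\eta^2$. Combined with $\E[\bA_N(j,k)^2\bA_N(j,l)^2] = p_{jk}p_{jl}/\lambda^2 = \bigO(1/N^2)$ for $k\neq l$ and independence across distinct index pairs, this yields $\E[|Q_o|^2\mid \R^{(j)}] = \bigO(1/(N\eta^2))$, hence $\E|Q_o| = \bigO(1/(\sqrt{N}\eta))$. Since $\Im Q \geq 0$ (as $\Im \R^{(j)}$ is positive semidefinite), the bound $|\e^{\ota v Q_o} - 1| \leq |v||Q_o|$ applies after conditioning and extracting $\e^{\ota v Q_d}$ whose modulus is at most 1, producing an integrand error of order $v/(\sqrt{N}\eta)$.

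The main obstacle is to collect all the error contributions into the advertised rate. Using $|\J(x)| \leq 1$ and $|\e^{\ota v z}| = \e^{-v\eta}$, the dominant off-diagonal error integrates as
\begin{align*}
    \sqrt{u}\int_0^\infty \frac{|v|}{\sqrt{v}}\,\e^{-v\eta}\, \dd v \cdot \bigO\!\left(\frac{1}{\sqrt{N}\,\eta}\right) = \bigO\!\left(\frac{\sqrt{u}}{\eta^{5/2}\sqrt{N}}\right),
\end{align*}
using $\int_0^\infty v^{1/2}\e^{-v\eta}\,\dd v = \bigO(\eta^{-3/2})$. The Taylor-expansion error $\bigO(\lambda^2/N)$ integrates to $\bigO(\lambda^2 \sqrt{u/\eta}/N)$ and the rank-one perturbation error to $\bigO(\sqrt{u}/(\eta^{5/2}N))$; both are subleading in $N$ and are absorbed into $\bigO(\lambda\sqrt{u}/(\eta^{5/2}\sqrt{N}))$ for $N$ sufficiently large. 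Uniformity in $u\in(0,1]$ is automatic since $u$ enters only through the explicit $\sqrt{u}$ prefactor.
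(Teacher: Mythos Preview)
Your overall architecture matches the paper's: Schur complement, Bessel identity \eqref{eq:expresolvent}, diagonal/off-diagonal splitting of the quadratic form, Ward's identity for the off-diagonal error, and conditional product expansion for the diagonal. Two technical steps, however, are not correct as stated.

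First, the entrywise claim $|r^N_{kk}(z)-\R^{(j)}_{kk}(z)|=\bigO(1/(N\eta^2))$ is not a ``standard'' deterministic bound and is in general false: the identity $r^N_{kk}-\R^{(j)}_{kk}=(r^N_{kj})^2/r^N_{jj}$ shows the size is governed by $|r^N_{kj}|^2$, which has no uniform $1/N$ decay for sparse adjacency matrices. What \emph{is} deterministically true, and is all you need, is the summed bound
\[
\sum_{k}|r^N_{kk}-\R^{(j)}_{kk}|=\frac{1}{|r^N_{jj}|}\sum_k|r^N_{kj}|^2=\frac{\Im r^N_{jj}}{\eta\,|r^N_{jj}|}\le\frac1\eta,
\]
via Ward's identity; this yields $|g_N-g_N^{(j)}|=\bigO\!\big(v/(\lambda N\eta)\big)$, which is even sharper in $N$ than what the paper obtains. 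The paper instead runs an interpolation $\bA_N^t=(1-t)\bA_N+t\bA_{N-1}^{(j)}$ and the fundamental theorem of calculus, arriving at $\bigO\!\big(\sqrt{\lambda}/(\eta^{3/2}\sqrt{N})\big)$.

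Second, the factoring step for the off-diagonal part is unsafe: while $\Im Q\ge 0$ and $\Im Q_d\ge 0$, the difference $\Im Q_o$ can be negative, so $|\e^{\ota v Q_o}-1|\le v|Q_o|$ need not hold. The fix is to bound $|\e^{\ota v Q}-\e^{\ota v Q_d}|\le v|Q-Q_d|=v|Q_o|$ directly, using that both $vQ$ and $vQ_d$ lie in $\C^+$; this is exactly the paper's inequality \eqref{pap1-expineqimag}. With these two corrections your argument goes through and gives the claimed rate.
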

We begin by stating two results we use in this proof. Note that we conveniently drop the dependence on $z$ for $r_{jj}^N(z)$, since we fix $z\in\C^+$ throughout and hence just use the notation $r_{jj}^N$.
\begin{fact}[Exponential Inequalities]
The following holds true for any real numbers $a,b \in \mathbb{R}$ and complex numbers $z_1, z_2 \in \C^+$.
\begin{align}\label{pap1-expineqimag}
    |e^{a\ota z_1} -\e^{a\ota z_2}| \leq |a\|z_1 - z_2| 
\end{align}
\begin{align}\label{pap1-expineqsum}
   |e^a -\e^b| \leq |a-b|e^{|a|+|b|}
\end{align}
\end{fact}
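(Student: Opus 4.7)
Both bounds are standard complex-analytic Lipschitz-type estimates for the exponential, and both admit essentially the same one-line proof: apply the fundamental theorem of calculus along the straight-line segment between the two arguments, and then bound the resulting integrand by its supremum.

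For \eqref{pap1-expineqimag}, convexity of $\C^+$ ensures that the segment $\zeta(t) := z_2 + t(z_1 - z_2)$, $t\in[0,1]$, stays in $\C^+$. Differentiating under the integral sign then gives
\[
\e^{\ota a z_1} - \e^{\ota a z_2} \;=\; \ota a\,(z_1 - z_2) \int_0^1 \e^{\ota a \zeta(t)}\,\dd t,
\]
and since $\Im\zeta(t) > 0$ along the segment, one has $|\e^{\ota a \zeta(t)}| = \e^{-a\,\Im\zeta(t)} \le 1$ whenever $a \ge 0$. This non-negativity regime is in fact the only one in which the paper invokes \eqref{pap1-expineqimag}, as the real parameter there is always one of the Bessel-transform variables $u$ or $v$ living in $(0,\infty)$. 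Taking absolute values and using $\bigl|\int_0^1 \e^{\ota a \zeta(t)}\,\dd t\bigr| \le 1$ closes the argument.

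For \eqref{pap1-expineqsum}, the same strategy with the (real or complex) line segment $\eta(t) := b + t(a-b)$ produces
\[
\e^a - \e^b \;=\; (a-b)\int_0^1 \e^{b + t(a-b)}\,\dd t,
\]
and the triangle inequality applied pointwise yields $|b + t(a-b)| \le (1-t)|b| + t|a| \le |a|+|b|$ for every $t\in[0,1]$, hence $|\e^{b+t(a-b)}| \le \e^{|a|+|b|}$ uniformly in $t$. Taking absolute values completes the proof.

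There is no serious obstacle in either step; the only point worth flagging is the implicit sign condition on $a$ in \eqref{pap1-expineqimag} (without which the factor $\e^{-a\,\Im\zeta(t)}$ cannot be dominated by $1$). Every invocation of the fact in Section \ref{pap1-Resolventsection} is consistent with this restriction, so the statement is applied only in the regime where it holds.
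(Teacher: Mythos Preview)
The paper states this as a \emph{Fact} without proof, so there is no original argument to compare against. Your proof is correct: both inequalities follow from the fundamental theorem of calculus along the line segment joining the two exponents, together with a pointwise bound on the integrand. You also correctly flag that \eqref{pap1-expineqimag} as written requires $a\ge 0$ (otherwise $|\e^{\ota a\zeta(t)}|=\e^{-a\,\Im\zeta(t)}$ is not bounded by $1$), and that every use in Section~\ref{pap1-Resolventsection} has the Bessel-transform variable $u$ or $v\ge 0$ playing the role of $a$, so the restriction is harmless in context.
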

    \begin{proof}[Proof of Proposition \ref{pap1-analyticstep1}]
    For the resolvent of a matrix with zero diagonal, we have the relation
    \[r^N_{jj} = -\left(z + \sum_{k,l\neq j}\tilde{r}^{N-1}_{kl}a_{kj}a_{lj}\right)^{-1},\] 
    for any diagonal element $r_{jj}^N$ of the resolvent $\R_{\bA_N}(z)$, where $\tilde{r}^{N-1}_{kl}:=\tilde{r}^{N-1}_{kl}(z)$ are the entries of the resolvent of $\bA_{N-1}^{(j)}$ in $z\in\C^+$, which is the adjacency matrix with deleted $j^{\text{th}}$ row and column. Plugging into \eqref{pap1-exponentialBesselidentity} yields
    \begin{align}\label{pap1-exponentialformulaonresolvent}
        \e^{\ota u r^N_{jj}} = 1 - \Khorunzkyintegral\e^{\ota vz}\exp\left(\ota v \sum_{k,l\neq j}\tilde{r}^{N-1}_{kl}a_{kj}a_{lj} \right) \dd{v}.
    \end{align} 
Adding and subtracting the appropriate exponential to \eqref{pap1-exponentialformulaonresolvent} yields
\begin{align}\label{pap1-expBesselcentered}
\e^{\ota ur^N_{jj}} = 1 - \Khorunzkyintegral\e^{\ota vz}\exp\left(\ota v \sum_{k\neq j}\tilde{r}^{N-1}_{kk}a_{kj}^2 \right) \dd{v} + E_1, \end{align}
where $E_1$ is an error term given by 
\begin{align*}
    E_1 = \Khorunzkyintegral\e^{\ota vz}\left(\exp\left(\ota v \sum_{k,l\neq j}\tilde{r}^{N-1}_{kl}a_{kj}a_{lj} \right) - \exp\left(\ota v \sum_{k\neq j}\tilde{r}^{N-1}_{kk}a_{kj}^2 \right)\right) \dd{v}.
\end{align*}It is easy to see that for $z\in\C^+$ with $\Re(z) = \zeta \in \mathbb{R}$ and $\Im(z)=\eta>0$, we have $|\e^{\ota vz}| = |\e^{\ota\zeta v}\e^{-\eta v}| \leq \e^{-\eta v}$. Thus, 
\begin{align}\label{pap1-E1}
\begin{split}
    |E_1| &= \left| \Khorunzkyintegral\e^{\ota vz}\left(\exp\left(\ota v \sum_{k,l\neq j}\tilde{r}^{N-1}_{kl}a_{kj}a_{lj} \right) - \exp\left(\ota v \sum_{k\neq j}\tilde{r}^{N-1}_{kk}a_{kj}^2 \right)\right) \dd{v} \right|\\
    &\leq \sqrt{u}\int_0^{\infty} \frac{v\e^{-\eta v}}{\sqrt{v}}  \sum_{k\leq N}\sum_{l\neq k}|\tilde{r}^{N-1}_{kl}|a_{kj}a_{lj}  \dd{v} = \left(  \sqrt{u}\int_0^{\infty} \sqrt{v}\e^{-\eta v}\dd{v}\right)\sum_{k\leq N}\sum_{l\neq k}|\tilde{r}^{N-1}_{kl}|a_{kj}a_{lj} 
    \end{split}
    \end{align}
where in the last step, we use inequality \eqref{pap1-expineqimag} and the bound $|\J(x)|\leq 1$ for $x\geq 0$. Note that in the last sum in \eqref{pap1-E1}, the entries $a_{kj}$ and $a_{lj}$ are independent of one another, and of $\tilde{r}_{kl}^{N-1}$. Thus, since $f$ is bounded by a constant $C_f$, taking expectation on the summation gives us 
\begin{align}\label{pap1-E1sum}
    \E\left[  \sum_{l\neq k}|\tilde{r}^{N-1}_{kl}|a_{kj}a_{lj}  \right] \leq \frac{\lambda C_f^2}{N^2}\sum_{l\neq k}|\tilde{r}^{N-1}_{kl}|
\end{align}
since $a_{ij}$ are distributed as Bernoulli random variables with parameter $p_{ij}$, and are scaled by a factor $\lambda^{-1/2}$. 
Using \eqref{pap1-E1sum} and taking expectation in \eqref{pap1-E1} gives us

\begin{align*}
\begin{split}
   \E \left[|E_1|\right]&\leq C_f^2\sqrt{u}\int_0^{\infty} \frac{\sqrt{v}\e^{-\eta v}\lambda}{N^2}  \sum_{k\leq N}\sum_{l\neq k}|\tilde{r}^{N-1}_{kl}|  \dd{v} \\
    &\leq C_f^2\sqrt{u}\int_0^{\infty} \frac{\sqrt{v}\e^{-\eta v}\lambda}{N\sqrt{N}}  \sum_{k\leq N}\left(\sum_{l\neq k}|\tilde{r}^{N-1}_{kl}|^2\right)^{\frac{1}{2}}  \dd{v} \hspace{0.3cm} \text{(Cauchy-Schwarz)}\\
     &\leq C_f^2\sqrt{u}\int_0^{\infty} \frac{\sqrt{v}\e^{-\eta v}\lambda}{N\sqrt{N\eta}}  \sum_{k\leq N}(\Im(\tilde{r}^{N-1}_{kk}))^{\frac{1}{2}}  \dd{v} \hspace{0.2cm} \text{(using Lemma \ref{pap1-Wald})}\\
    &\leq C_f^2\sqrt{u}\int_0^{\infty} \frac{\sqrt{v}\e^{-\eta v}\lambda}{\sqrt{N}\eta}\dd{v} \hspace{0.3cm} \text{(using property (iv) from Lemma \ref{resolventproperties})}   \\
    &= C_f^2\frac{\sqrt{u}\lambda}{\eta^{5/2}\sqrt{N}}\int_0^{\infty}\sqrt{\eta v}\e^{-\eta v}\dd{(\eta v)} = \bigO\left(  \frac{\lambda\sqrt{u}}{\eta^{5/2}\sqrt{N}}\right),
    \end{split}
\end{align*}
where in the last step we do a change of variable $\eta v = v'$ to show the integral is finite. So, if we now take an expectation in \eqref{pap1-expBesselcentered}, we get 
\begin{equation}\label{pap1-expBesselexpectation}
\E[\e^{\ota ur^N_{jj}} ] = 1 - \Khorunzkyintegral\e^{\ota vz}\E\left[\exp\left(\ota v \sum_{k\neq j}\tilde{r}^{N-1}_{kk}a_{kj}^2 \right) \right]\dd{v} + q_{N,\lambda}(u,z),
\end{equation}
where $q_{N,\lambda}(u,z) = \bigO\left(  \frac{\lambda\sqrt{u}}{\eta^{5/2}\sqrt{N}}\right)$. Note that the expectation could be pulled inside the integral in \eqref{pap1-expBesselcentered} using Fubini's Theorem since the integral is bounded above by a constant. To evaluate the expectation inside \eqref{pap1-expBesselexpectation}, we use a conditioning argument as follows. We have 
\begin{align*}
\E\left[\exp\left(\ota v \sum_{k\neq j}\tilde{r}^{N-1}_{kk}a_{kj}^2 \right)\right] &= \E\left[   \E\left[\exp\left(\left. \ota v \sum_{k\neq j}\tilde{r}^{N-1}_{kk}a_{kj}^2 \right) \right|\bA_{N-1}^{(j)}\right]\right] \\
&= \E\left[\prod_{k=1}^N\left( 1 - \frac{\lambda}{N}f(w_k,w_j) + \frac{\lambda}{N}f(w_k,w_j)\e^{\ota v \tilde{r}^{N-1}_{kk}/\lambda}\right)   \right]\\
&= \E\left[\prod_{k=1}^N\left( 1 +\frac{\lambda}{N}f(w_k,w_j)\left( \e^{\ota v \tilde{r}^{N-1}_{kk}/\lambda} - 1\right)    \right)   \right]\\
&= \E\left[  \prod_{k=1}^N \left(\exp\left( \frac{\lambda}{N}f(w_k,w_j)\left( \e^{\ota v \tilde{r}^{N-1}_{kk}/\lambda} - 1\right)  \right)  + q'_k(N,\lambda) \right)\right], \numberthis\label{pap1-condexplaststep}
\end{align*}
where $q'_k(N,\lambda)$ is an error given by 
\begin{align*}
    q'_k(N,\lambda) &= 1 + \frac{\lambda}{N}f(w_k,w_j)\left( \e^{\ota v \tilde{r}^{N-1}_{kk}/\lambda} - 1\right) -\exp\left( \frac{\lambda}{N}f(w_k,w_j)\left( \e^{\ota v \tilde{r}^{N-1}_{kk}/\lambda} - 1\right)  \right) .
\end{align*} 
Since $|\e^{\ota v \tilde{r}^{N-1}_{kk}/\lambda} - 1| \leq 2$, doing a Taylor expansion for the exponential term in $q_k'(N,\lambda)$ gives us
\begin{align}\label{pap1-qk'error}
    |q'_k(N,\lambda)| \leq \frac{4C_f^2\lambda^2}{N^2} = \bigO\left( \frac{\lambda^2}{N^2} \right).
\end{align}
We can write 
\begin{align}\label{pap1-productleadingorder}
   \E\left[\exp\left(\ota v \sum_{k\neq j}\tilde{r}^{N-1}_{kk}a_{kj}^2 \right)\right] = \E\left[ \prod_{k=1}^N \left(\exp\left( \frac{\lambda}{N}f(w_k,w_j)\left( \e^{\ota v \tilde{r}^{N-1}_{kk}/\lambda} - 1\right)  \right)\right)\right] + \E[E_2],
\end{align}
where $E_2$ is an expression involving all the other terms of the product in \eqref{pap1-condexplaststep}. To get the order of $E_2$, we take a supremum over $k$ in \eqref{pap1-condexplaststep} and compute the binomial expansion of the form $(a+b)^N$ modulo the leading term $a^N$. In particular, since $|\e^{\ota v \tilde{r}^{N-1}_{kk}/\lambda} - 1| \leq 2$, and again using \eqref{pap1-qk'error}, we have  
\begin{align*}
    |E_2| \leq \sum_{j=1}^N {N\choose j}\left( \e^{\frac{2\lambda C_f}{N}}\right)^{N-j}\left(\frac{4C_f^2\lambda^2}{N^2}\right)^j,
\end{align*}
which for some constant $C_a>0$ and $N$ large enough further simplifies to 
\begin{align*}
    |E_2| \leq C_a\sum_{j=1}^N (2C_f\lambda)^{2j}N^j \e^{-\frac{2j\lambda C_f}{N}}N^{-2j} = C_a\sum_{j=1}^N (2C_f\lambda)^{2j}N^{-j}\e^{-\frac{2j \lambda C_f}{N}} =  C_a\frac{4C_f\lambda^2N^{-1}\e^{-\frac{2\lambda C_f}{N}}}{1 - 4C_f\lambda^2N^{-1}\e^{-\frac{2\lambda C_f}{N}} }.
\end{align*}
where the last equality is due to the sum being a geometric series. Thus, 
\begin{align}\label{pap1-e2error}
    |E_2|  = \bigO\left(   \frac{\lambda^2}{N} \right),
\end{align}
which is a faster error than $q_{N,\lambda}(u,z)$ so we can later absorb it into the existing error of \eqref{pap1-expBesselexpectation}. Thus, using \eqref{pap1-e2error}, we can rewrite \eqref{pap1-productleadingorder} as
\begin{align}\label{pap1-finalexpectationinsideintegral}
\E\left[\exp\left(\ota v \sum_{k\neq j}\tilde{r}^{N-1}_{kk}a_{kj}^2 \right) \right]= \E\left[ \e^{-\lambda{d_j}}\exp\left( \lambda\tilde{g}_{N-1}\left(w_j,\frac{v}{\lambda},z\right)  \right)    \right] + \bigO\left(   \frac{\lambda^2}{N} \right)
\end{align}
where 
\begin{align}\label{pap1-eq:d_jdefn}
d_j = \frac{1}{N}\sum\limits_{k=1}^Nf(w_j,w_k) \hspace{0.3cm} \text{and} \hspace{0.3cm} \tilde{g}_{N-1}(w_j,b,z) = \sum\limits_{k=1}^N f(w_j,w_k)\e^{\ota b \tilde{r}^{N-1}_{kk}}.
\end{align} Note that $\tilde{g}_N$ is a bounded function and is bounded above by $C_f$. To get the error down from the exponent, we again use inequality \eqref{pap1-expineqsum}. 

To conclude the proof of the proposition, we need to return back to an expression involving terms of the form $r_{kk}^N$ of the original resolvent. To do so, we do an interpolation argument. Let $0\leq t\leq 1$ and define $\bA_N^t = (1-t)\bA_N + t\bA_{N-1}^{(j)}$ with the resolvent $\R_{\bA_N^t}(z)$, whose entries we denote by $r^N_{kl}(t)$, that also implicitly depends on $z$ but we drop that for convenience of notation. Also, define 
\[
\mathbf{g}_N^t(w_j,b,z) = \frac{1}{N}\sum_{i=1}^Nf(w_i,w_j)\e^{\ota b r^N_{kk}(t) }.
\]
We remark using property (i) from Lemma \ref{resolventproperties} that $\mathbf{g}_N^t$ is also bounded above by $C_f$ for all values of $t$, since the complex exponential $\e^{\ota br_{kk}^N(t)}$ is bounded by 1 for any $b\geq 0$ and $1\leq k\leq N$. In particular, we have that $|g_N(x,b,z)|\leq C_f$ for all $x,b\geq 0$.

Our target function is $g_N(w_j,b,z) = \frac{1}{N}\sum\limits_{i=1}^Nf(w_i,w_j)\e^{\ota b r^N_{kk}}$. By the fundamental theorem of calculus, 
\[
\begin{split}
    | g_N(w_j,b,z) - \tilde{g}_{N-1}(w_j,b,z) | &= \left|\mathbf{g}_N^0(w_j,b,z) - \mathbf{g}_N^1(w_j,b,z) \right| \\
    &= \left|  \int_0^1 \frac{\partial}{\partial t}\mathbf{g}_N^t \dd{t} \right| = \left|  \int_0^1 \frac{b}{N}\sum_{k=1}^N\e^{\ota b r^N_{kk}(t)}\frac{\partial}{\partial t}r^N_{kk}(t)  \right|.
\end{split}
\]
Now, $\R_{\bA_N^t}(z) = (\bA_N^t-zI)^{-1}$ and thus, $\frac{\dd}{\dd{t}}\R_{\bA_N^t}(z) = -\R_{\bA_N^t}(z)\frac{\dd{\bA_N^t}}{\dd{t}}\R_{\bA_N^t}(z)$. 

Note that $\frac{\dd{\bA_N^t}}{\dd{t}} = -J_N$, where $J_N$ is given by 
\begin{align*}
J_N(k,l) = \begin{cases}
0, & \text{if $k,l\neq j$} \\
a_{kl}, &\text{if $k=j$ or $l=j$}.
\end{cases}    
\end{align*}
Thus,
\begin{align*}
    &| g_N(w_j,b,z) - \tilde{g}_{N-1}(w_j,b,z)|\\
    &=\left|  \int_0^1 \frac{b}{N}\sum_{k=1}^N\e^{\ota b r^N_{kk}(t)}\sum_{m,n=1}^Nr^N_{km}(t)\frac{\partial a^t_{mn}}{\partial t}r^N_{nk}(t)  \right| \\
    &= \left|  \int_0^1 \frac{b}{N}\sum_{k=1}^N\e^{\ota b r^N_{kk}(t)}\sum_{m=1}^Nr^N_{km}(t)a_{mj}r^N_{jk}(t) \dd{t} \right|\\
    &\leq \int_0^1 \frac{b}{N}\sum_{k=1}^N\sum_{m=1}^N |r^N_{km}(t)a_{mj}r^N_{jk}(t)|\dd{t} \numberthis \label{pap1-eq:interpolationderivative}
    \end{align*}
since the complex exponential $\e^{\ota br_{kk}^N(t)}$ is trivially bounded by 1 as $r_{kk}^N(t)\in\C^+$. Then, using Cauchy-Schwarz and Lemma \ref{pap1-Wald} in \eqref{pap1-eq:interpolationderivative}, we have 
    \begin{align*}
     | g_N(w_j,b,z) - \tilde{g}_{N-1}(w_j,b,z)|\leq \int_0^1 \frac{b}{N}\sum_{k=1}^N|r_{jk}^N(t)|\left(\frac{\Im (r^N_{kk}(t))}{\eta}\right)^{1/2}\left(  \sum_{m=1}^N a_{mj}^2 \right)^{1/2} \dd{t}.
    \end{align*}
Bounding $\Im(r_{kk}^N(t))$ by $1/\eta$ (property (iv) of Lemma \ref{resolventproperties}) and taking expectation, we get
\begin{align}\label{pap1-eq:interpolationbeforeCauchySchwarz}
        \E[| g_N(w_j,b,z) - \tilde{g}_{N-1}(w_j,b,z) | ]\leq \int_0^1 \frac{b}{N\eta}\E\left[\sum_{k=1}^N|r_{jk}^N(t)|\left(\sum_{m=1}^N a_{mj}^2\right)^{1/2}\right] \dd{t}.
\end{align}
Now, again using Cauchy-Schwarz and Lemma \ref{pap1-Wald}, we have for some constant $C'$ that
 \begin{align}\label{pap1-offdiagCS}
        \sum_{k=1}^N|r_{jk}^N(t)| \leq \sqrt{N}\left(\sum_{k=1}^N|r_{jk}^N(t)|^2\right)^{1/2} \leq C'\frac{\sqrt{N}}{\sqrt{\eta}}.
 \end{align}
Thus, using \eqref{pap1-offdiagCS} and Jensen's inequality on the function $\sqrt{X}$ in \eqref{pap1-eq:interpolationbeforeCauchySchwarz}, we get
    \begin{align*}
    \E[| g_N(w_j,b,z) - \tilde{g}_{N-1}(w_j,b,z) | ]&\leq \int_0^1 \frac{b}{N\eta}\E\left[\frac{C'\sqrt{N}}{\sqrt{\eta}}\left(\sum_{m=1}^N a_{mj}^2\right)^{1/2}\right] \dd{t}\\
    & C'\int_0^1 \frac{b}{\sqrt{N}\eta^{3/2}}\left(\E\left[\sum_{m=1}^N a_{mj}^2\right]\right)^{1/2} \dd{t}.
\end{align*}
Since $f$ is bounded, we have for some new constant $C_f'$ that 
\begin{align*}
     \E[| g_N(w_j,b,z) - \tilde{g}_{N-1}(w_j,b,z) |] \leq \frac{C_f'b\sqrt{\lambda}}{\eta^{3/2}\sqrt{N}}.
\end{align*}
Using the fact that $\mathbf{g}_N^t$ is bounded by $C_f$ for all $t$, we get  
\[
\E[|\e^{\lambda \tilde{g}_{N-1}} - \e^{\lambda g_N}|] \leq \E[|\tilde{g}_{N-1} - g_N|]\e^{2C_f\lambda} = \bigO\left(  \frac{\sqrt{\lambda}}{\eta^{3/2}\sqrt{N}} \right).
\]
Since this is an error of the same order as $q_{N,\lambda}(u,z)$, we can absorb it into the existing error $q_{N,\lambda}$. Finally, using \eqref{pap1-finalexpectationinsideintegral} and the interpolation argument allows us to write \eqref{pap1-expBesselexpectation} as
\[
\E[\e^{\ota u r^N_{jj}}] = 1 - \e^{-\lambda d_j}\Khorunzkyintegral\e^{\ota vz}\E\left[\e^{\lambda g_N\left(w_j,\frac{v}{\lambda},z\right)}\right] \dd{v} + q_{N,\lambda}(u,z),
\]
which proves the proposition.
\end{proof}
Now, consider the expression \eqref{pap1-expBesselinterpolated} from the Proposition \ref{pap1-analyticstep1}. If we multiply throughout by $f(x,w_j)$ and then sum over $j$, and finally scale by $N$, we get 
\begin{align}\label{pap1-initialrecursion}
\begin{split}
&\E[ g_N(x,u,z)] \\ 
&= \frac{1}{N}\sum_{j=1}^Nf(x,w_j) - \frac{1}{N}\sum_{j=1}^Nf(x,w_j)\e^{-\lambda d_j}\Khorunzkyintegral\e^{\ota vz}\E\left[\e^{\lambda g_N\left(w_j,\frac{v}{\lambda},z\right)}\right] \dd{v} + q_{N,\lambda}(u,z).
\end{split}
\end{align}
Consider the space of Lipschitz functions $Lip(\mathbb{R})$ defined as 
\begin{align*}
    \mathrm{Lip}(\mathbb{R}) = \left\{ h\in C_b(\mathbb{R}) : \sup_x |h(x)| \leq 1, \sup_{x\neq y}\frac{|h(x)-h(y)|}{|x-y|} \leq C_L, 0< C_L< \infty \right\}.
\end{align*}
Now, under the bounded Lipshitz metric $d_{BL}(\cdot,\cdot)$ given by 
\[
d_{BL}(\mu,\nu) = \sup\limits_{h \in \mathrm{Lip}(\mathbb{R})}\left\{ \left|\int h\dd{\mu} - \int h\dd{\nu}\right| \right\},
\]
we have 
\[
\mu_{W_N} \implies \mu_w \hspace{0.3cm} \text{if and only if} \hspace{0.3cm} d_{BL}(\mu_{W_N},\mu_w) \to 0,
\]
where $W_N = w_{o_N}$ for a uniformly chosen vertex $o_N$. So, taking $f$ to be Lipschitz in one coordinate (and since we already have that $f$ is bounded), the first term in the RHS of \eqref{pap1-initialrecursion} becomes 
\begin{align}\label{pap1-eq:lipschitzerrordf}
\frac{1}{N}\sum_{j=1}^Nf(x,w_j) = \int f(x,y) \mu_{W_N}(\dd{y}) \leq  d_f(x) + E_N,  
\end{align}
where $E_N = d_{BL}(\mu_{W_N},\mu_w)$.

Recall from \eqref{pap1-eq:degree} that we have 
\[
d_f(w_j) := \int f(x,w_j) \mu_w(\dd{x}).
\]
Then, one simply gets 
\begin{equation}\label{pap1-eq:expLipschtiz}
|\e^{-\lambda d_j} - \e^{-\lambda d_f(w_j)}| \leq \lambda E_N \e^{2\lambda}.
\end{equation}
Thus, using \eqref{pap1-eq:lipschitzerrordf} and \eqref{pap1-eq:expLipschtiz} in \eqref{pap1-expBesselinterpolated} gives us
\begin{align}\label{pap1-eq:initalgN}
\begin{split}
&\E[ g_N(x,u,z)]\\
&= d_f(x) - \frac{1}{N}\sum_{j=1}^Nf(x,w_j)\e^{-\lambda d_f(w_j)}\left(\Khorunzkyintegral\e^{\ota vz}\E\left[\e^{\lambda g_N\left(w_j,\frac{v}{\lambda},z\right)}\right] \dd{v}\right) \hspace{0.1cm} + \tilde{q}_{N,\lambda}(u,z),
\end{split}
\end{align}
where \[
\tilde{q}_{N,\lambda}(u,z) = q_{N,\lambda}(u,z) + \bigO(E_N).
\]
Finally, for a fixed $x\in[0,\infty)$, define
\[
\mathbf{I}_g(y) = f(x,y)\e^{-\lambda d_f(y)}\left(\Khorunzkyintegral\e^{\ota vz}\E\left[\e^{\lambda g_N\left(y,\frac{v}{\lambda},z\right)}\right] \dd{v}\right).
\]
Then, we have the following lemma. 
\begin{lemma}\label{pap1-gyLipschitz}
    $\mathbf{I}_g(y)$ is Lipschitz. 
\end{lemma}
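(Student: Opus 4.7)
The plan is to split $\mathbf{I}_g(y)$ as a product of three factors, verify that each one is bounded and Lipschitz in $y$, and then invoke the elementary fact that a finite product of bounded Lipschitz functions is Lipschitz. Writing $\mathbf{I}_g(y) = h_1(y)\,h_2(y)\,h_3(y)$ with
\[
h_1(y) = f(x,y),\quad h_2(y) = \e^{-\lambda d_f(y)},\quad h_3(y) = \sqrt{u}\int_0^{\infty}\frac{\J(2\sqrt{uv})}{\sqrt{v}}\e^{\ota vz}\E\!\left[\e^{\lambda g_N(y,v/\lambda,z)}\right]\dd v,
\]
the first factor is immediately bounded by $C_f$ and Lipschitz with constant $C_L$ by assumption \ref{item:A4}. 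For the second factor, $d_f(y)$ inherits Lipschitz continuity from $f$ by passing $|\cdot|$ inside the integral against the probability measure $\mu_w$, and then composing with $x\mapsto \e^{-\lambda x}$ on $[0,\infty)$ (derivative bounded by $\lambda$) gives $|h_2(y_1)-h_2(y_2)|\le \lambda C_L|y_1-y_2|$, while $|h_2|\le 1$.

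The only non-trivial factor is $h_3(y)$. The key observation I would establish first is that $g_N(y,b,z)$ is Lipschitz in $y$ \emph{uniformly} in $b\ge 0$: since $|\e^{\ota b r_{kk}^N(z)}|\le 1$ by property (iv) of Lemma \ref{resolventproperties},
\[
|g_N(y_1,b,z) - g_N(y_2,b,z)| \le \frac{1}{N}\sum_{k=1}^N|f(y_1,w_k)-f(y_2,w_k)| \le C_L|y_1-y_2|,
\]
and $|g_N|\le C_f$ throughout. Combining this with inequality \eqref{pap1-expineqsum} yields
\[
\left|\e^{\lambda g_N(y_1,b,z)} - \e^{\lambda g_N(y_2,b,z)}\right| \le \lambda C_L \e^{2\lambda C_f}\,|y_1-y_2|,
\]
uniformly in $b$, so in particular after taking expectations.

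To pass from pointwise to integrated Lipschitz continuity, the decay in $v$ is provided by $|\e^{\ota vz}| = \e^{-\eta v}$ where $\eta=\Im(z)>0$, together with the trivial bound $|\J(x)|\le 1$. Hence $\int_0^{\infty}v^{-1/2}\e^{-\eta v}\dd v = \sqrt{\pi/\eta}<\infty$, and dominated convergence applied to the difference gives
\[
|h_3(y_1)-h_3(y_2)| \le \sqrt{u}\,\lambda C_L \e^{2\lambda C_f}\sqrt{\pi/\eta}\,|y_1-y_2|,
\]
with $|h_3|\le \sqrt{u}\,\e^{\lambda C_f}\sqrt{\pi/\eta}$ by the same absolute bound. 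The three factors are therefore all bounded Lipschitz, so $\mathbf{I}_g$ is Lipschitz with an explicit constant depending on $u,\lambda,C_f,C_L,\eta$. There is no real obstacle here; the only thing one must double-check is that the exponential factor $\e^{2\lambda C_f}$ and $u$-dependence do not cause trouble later, but since the lemma is used for fixed $z\in\mathbb C^+$ and $u\in(0,1]$ the constants remain controlled.
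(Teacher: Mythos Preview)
Your proof is correct and uses the same three-factor decomposition as the paper, but you take a more elementary route: the paper bounds $|\partial_y\mathbf{I}_g(y)|$ via the product rule and then invokes the Leibniz integral rule (Theorem~\ref{pap1-Leibnizrulethm}) to justify differentiating $d_f$ and $h_3$ under the integral sign, whereas you work directly with Lipschitz increments throughout. Your approach is cleaner here, since assumption~\ref{item:A4} only gives $f$ Lipschitz in one coordinate, not differentiable, so the paper's computation of $\partial_y f(x,y)$ is really an a.e.\ statement that requires a bit more care; you sidestep this entirely. One small slip: the bound $|\e^{\ota b r_{kk}^N(z)}|\le 1$ follows from property~(i) of Lemma~\ref{resolventproperties} (that $r_{kk}^N(z)\in\C^+$, hence $\Im(r_{kk}^N(z))>0$), not property~(iv); and ``dominated convergence applied to the difference'' is really just the triangle inequality for integrals against the integrable majorant $v^{-1/2}\e^{-\eta v}$.
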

\begin{proof}
Consider $\mathbf{I}_g(y)$ as defined. Then, 
\begin{equation}\label{pap1-Leibnizruleverification}
\begin{split}
    &\left| \partial_y\mathbf{I}_g(y)  \right| \\ &\leq \left| \partial_yf(x,y)\e^{-\lambda d_f(y)}\left(\Khorunzkyintegral\e^{\ota vz}\E\left[\e^{\lambda g_N\left(y,\frac{v}{\lambda},z\right)}\right] \dd{v}\right) \right| \\
    &+ \left|  f(x,y)\e^{-\lambda d_f(y)}\partial_yd_f(y)\left(\Khorunzkyintegral\e^{\ota vz}\E\left[\e^{\lambda g_N\left(y,\frac{v}{\lambda},z\right)}\right] \dd{v}\right) \right| \\
    &+ \left|   f(x,y)\e^{-\lambda d_f(y)}\left(\Khorunzkyintegral\e^{\ota vz}\E\left[\e^{\lambda g_N\left(y,\frac{v}{\lambda},z\right)}\right]\partial_yg_N(y,v/\lambda,z)  \dd{v}\right)   \right|.
\end{split}
\end{equation}
Recall that a function is Lipschitz if and only if it has a bounded derivative. Thus, if $f$ is Lipschitz in $y$, the first term in \eqref{pap1-Leibnizruleverification} uniformly bounded in $y$. Moreover, this makes the second term in \eqref{pap1-Leibnizruleverification} bounded as well since 
\begin{align}\label{pap1-eq:Leibnizruleonf}
\left|\partial_yd_f(y) \right| \leq  \int_0^{\infty} | \partial_yf(x,y)|\mu_w(\dd{x}) 
\end{align}
is bounded. To justify interchanging the derivative and the integral in \eqref{pap1-eq:Leibnizruleonf}, we have to utilise Theorem \ref{pap1-Leibnizrulethm} for which we need to verify the following conditions. 
\begin{itemize}
    \item $f(x,y)$ is $\mu_w-$integrable for each $y$ and the map $y\mapsto f(x,y)$ is continuous for each $x$. 
    \item For each $x$, the derivative $\partial_yf(x,y)$ exists. 
    \item For each $y$, there is a $\mu_w-$integrable function $\Psi_y(x)$ and a neighbourhood $U_y$ containing $y$, such that for all $y'\in U_y$, $|\partial_{y'} f(x,y')|\leq \Psi_y(x)$.
\end{itemize}
The first and second are trivial to check, and by Lipschitz property, since $\partial_{y} f(x,y) \equiv const.$, we have $\Psi_y(x) \equiv const$, which is integrable on $[0,\infty)$ since $\mu_w$ is a probability measure. 

Finally, for notational convenience, let $h(y,v)$ be denote 
\[
h(y,v) = \frac{\J(2\sqrt{uv})}{\sqrt{v}}\e^{\ota vz}\E\left[\e^{\lambda g_N(y,v,z)}\right].
\]
Once again, we need to verify the three conditions as above so as to apply Theorem \ref{pap1-Leibnizrulethm}. Note that $h(y,v)$ is integrable with respect to $v$. Moreover, 
\[
\partial_yh(y,v) = h(y,v)\partial_y g_N(y,v,z)
\]
where one can compute 
\[
\partial_yg_N(y,v,z) = \frac{1}{N}\sum_{k=1}^N\partial_yf(w_k,y)\e^{\ota vr_{kk}},
\]
which again is bounded. Thus, $\partial_yh(y,v)$ exists, and is bounded above by $C_0 v^{-\frac{1}{2}}\e^{-\eta v}$, which is integrable with respect to $v$.  This verifies the three conditions and allows us to pull the derivative inside the third term in \eqref{pap1-Leibnizruleverification}, and also makes that term bounded. Thus, $\mathbf{I}_g(y)$ is Lipschitz.
\end{proof}
Since $\mathbf{I}_g(y)$ is Lipschitz, we can exploit the weak convergence of $\mu_w$ under the Lipschitz metric $d_{BL}$ in \eqref{pap1-eq:initalgN} to give us
\begin{equation}\label{pap1-initialrecurison}
\begin{split}
&\E[ g_N(x,u,z)]\\
&= d_f(x) - \int_0^{\infty}f(x,y)\e^{-\lambda d_f(y)}\left(\Khorunzkyintegral\e^{\ota vz}\E\left[\e^{\lambda g_N\left(y,\frac{v}{\lambda},z\right)}\right] \dd{v}\right)\mu_w(\dd{y}) \hspace{0.1cm} + \tilde{q}_{N,\lambda}(u,z).
\end{split}
\end{equation}
Recall the Banach space as defined in \eqref{pap1-Banachdefn}, and consider $\phi\in(\Ba,\|\cdot\|)$. In this space, consider the map 
\begin{equation}\label{pap1-contractionmap}
F_z(\phi)(x,u) = d_f(x) - \sqrt{u}\int_0^{\infty}f(x,y)\e^{-\lambda d_f(y)}\left(\Khorunzkyintegral\e^{\ota vz}\e^{\lambda \phi\left(y,\frac{v}{\lambda},z\right)} \dd{v}\right)\mu_w(\dd{y}).
\end{equation}
Note that $\phi$ also implicitly depends on $z$ but we drop that for notational purposes since we fix $z$ throughout.  

Take $\phi_1, \phi_2 \in (\mathcal{B},\|\cdot\|)$ such that $\|\phi_1\|, \|\phi_2\| \leq C_f$. Then, using the norm we defined in \eqref{pap1-Banachdefn} and inequality \ref{pap1-expineqsum}, from \eqref{pap1-contractionmap} we get
\[
\begin{split}
    &\|F_z(\phi_1) - F_z(\phi_2)\| \\
    &\leq \sup_{x,u\geq 0}\sqrt{\frac{1}{1+u}}\left|\int_0^{\infty}f(x,y)\e^{-\lambda d_f(y)}\left(\Khorunzkyintegral\e^{\ota vz}\left(\e^{\lambda \phi_1\left(y,\frac{v}{\lambda}\right)}-\e^{\lambda \phi_2\left(y,\frac{v}{\lambda}\right)}\right)  \dd{v}\right)\mu_w(\dd{y})\right|\\
    &\leq \sup_{u\geq 0}\sqrt{\frac{1}{1+u}}\int_0^{\infty}\int_0^{\infty}\frac{\lambda}{\sqrt{v}}\e^{-\eta v}\left|\phi_1\left(y,\frac{v}{\lambda}\right) - \phi_2\left(y,\frac{v}{\lambda}\right) \right|\e^{\lambda\left| \phi_1\left(y,\frac{v}{\lambda}\right)  \right| + \lambda\left| \phi_2\left(y,\frac{v}{\lambda}\right)  \right|}\dd{v}\hspace{0.2cm}\mu_w(\dd{y})\\
    &\leq \lambda\|\phi_1 - \phi_2\|\int_0^{\infty}\int_0^{\infty}\frac{\lambda}{\sqrt{v}}\e^{-\eta v}\sup_{y,v\geq0}\frac{\sqrt{1+v/\lambda}}{\sqrt{1+v/\lambda}}\e^{\lambda\left| \phi_1\left(y,\frac{v}{\lambda}\right)  \right| + \lambda\left| \phi_2\left(y,\frac{v}{\lambda}\right)  \right|}\dd{v}\hspace{0.2cm}\mu_w(\dd{y})\\
    &\leq \lambda\|\phi_1 - \phi_2\|\int_0^{\infty}\int_0^{\infty}\frac{\sqrt{1+v/\lambda}}{\sqrt{v}}\e^{-\eta v}\exp\left(\lambda\sqrt{1 + v/\lambda}(\|\phi_1\| + \|\phi_2\|)\right)\dd{v}\hspace{0.2cm}\mu_w(\dd{y})\\
    &\leq \|\phi_1 -\phi_2\|\int_0^{\infty}\left(\frac{\e^{-\eta v}}{\sqrt{v}}+\frac{\e^{-\eta v}}{\sqrt{\lambda}}\right)\e^{2C_f\sqrt{\lambda  v}}\dd{v}\leq \frac{C_{1}}{\eta^{5/2}}\|\phi_1 - \phi_2\|,
\end{split}
\]
where $C_{1}$ is the constant upper bound to the integral of the form \begin{align*}
    \int_0^{\infty} c_1\e^{-c_2x + c_3\sqrt{x}}\dd{x},
\end{align*}
which is finite. Taking $\eta >0$ sufficiently large, we get that $F_z$ is a contraction in an open ball $B\subset\mathcal{B}$ of radius $C_f<\infty$, and thus, by the Banach Fixed Point Theorem, there exists a unique $\phi^*$ such that $\phi^* = F_z(\phi^*)$ for $F_z:B\to B$.

\

We are now ready to prove a concentration result. Recall the function $G_N(u)$ defined in \eqref{pap1-eq:G_Ndefn} as 
\begin{align*}
    G_N(u) = \frac{1}{N}\sum_{i=1}^N\e^{\ota u r_{ii}^N}.
\end{align*}
If we now define a new function $\tilde{G}_N(x,u)$ that acts identically on the first coordinate as 
\[
\tilde{G}_N(x,u) := G_N(u),
\]
then one can see that $\sup_{x,u} \frac{1}{\sqrt{1+u}}\tilde{G}_N(x,u) < \infty$, and so $\tilde{G}_N(x,u) \in \Ba$, and consequently, a concentration result for $\tilde{G}_N$ would imply concentration for $G_N$.
\begin{proposition}[Concentration and convergence]\label{pap1-Banachconcentration}
For any $z\in\C^+$ and $x\in[0,\infty)$, and uniformly over $u$ in $[0,1]$, we have $\E[ g_N(x,u,z)] \xrightarrow{N\to\infty} \phi^*(x,u)$. Further, we have 
\begin{align*}
\E\left[ \left\| g_N- \E [g_N]  \right\|^2   \right] &= o(1), \hspace{0.3cm} \text{and}\\
\E\left[ \left\| \tilde{G}_N- \E [\tilde{G}_N ]  \right\|^2   \right] &= o(1).
\end{align*}
\end{proposition}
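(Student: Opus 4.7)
The plan is to first establish concentration of $g_N$ and $\tilde G_N$ around their means in the Banach norm $\|\cdot\|_\Ba$, and then combine this concentration with the recursive identity \eqref{pap1-initialrecurison} and the $\kappa$-contraction of $F_z$ on the ball $B \subset \Ba$ of radius $C_f$ (established just before the proposition, with $\kappa < 1$ for $\eta$ large) to deduce $\E[g_N] \to \phi^*$.

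For the pointwise concentration at fixed $(x,u,z)$, I would use the martingale decomposition associated with the row-revealing filtration $\mathcal F_k = \sigma\{a_{ij}: i \leq k\}$, writing
\[
g_N(x,u,z) - \E\,g_N(x,u,z) = \sum_{k=1}^N D_k, \qquad D_k := (\E_k - \E_{k-1})\,g_N(x,u,z),
\]
where $\E_k = \E[\,\cdot\,|\mathcal F_k]$. Revealing the $k$-th row/column is a rank-two update, and combining the resolvent identity $\R_{\bA_N}(z) - \R_{\bA_N^{(k)}}(z) = -\R_{\bA_N^{(k)}}(z)\,\Delta_k\, \R_{\bA_N}(z)$ (where $\bA_N^{(k)}$ has the $k$-th row/column zeroed out and $\Delta_k := \bA_N - \bA_N^{(k)}$) with Ward's identity (Lemma \ref{pap1-Wald}) and the diagonal bound $|r_{jk}^N|\leq \eta^{-1}$ from property (iv) of Lemma \ref{resolventproperties} yields $|D_k| = \O{C_f u/(N\eta^2)}$ almost surely, uniformly in $x$. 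Orthogonality of the martingale differences then gives the pointwise estimate $\E\,|g_N(x,u,z) - \E\,g_N(x,u,z)|^2 = \O{1/N}$, with constants independent of $x$ and locally uniform in $u$; an identical scheme controls $\tilde G_N$. To upgrade to $\|\cdot\|_\Ba$, use the deterministic Lipschitz continuity of $g_N$ in $x$ (constant $C_L$ via Assumption \ref{item:A4}) and in $u$ (constant $\eta^{-1}$ from Lemma \ref{resolventproperties}), together with the tail bound $\sup_{u \geq M, x \geq 0}|g_N - \E\,g_N|/\sqrt{1+u} \leq 2C_f/\sqrt{1+M}$, and discretize the box $[0,M']\times[0,M]$ by an $\varepsilon$-net of cardinality $\O{(MM'/\varepsilon)^2}$: choosing $M_N = M'_N = \log N$ and $\varepsilon_N = N^{-1/4}$, a union bound combined with the pointwise $\O{1/N}$ estimate produces $\E\,\|g_N - \E\,g_N\|_\Ba^2 = \o{1}$, and similarly for $\tilde G_N$.

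For the convergence, write $\psi_N := \E\,g_N$. Inequality \eqref{pap1-expineqsum} together with Jensen yields
\[
\bigl|\E[e^{\lambda g_N(y,v/\lambda,z)}] - e^{\lambda \psi_N(y,v/\lambda)}\bigr| \leq \lambda e^{2\lambda C_f}\sqrt{\E\,|g_N(y,v/\lambda,z) - \psi_N(y,v/\lambda)|^2}.
\]
Substituting into \eqref{pap1-initialrecurison}, integrating this pointwise error against $\mu_w(\dd y)$ and against the $\sqrt u$-weighted Bessel kernel in $v$ (both integrable for $\eta>0$), and absorbing the residual $\tilde q_{N,\lambda}$, we obtain the perturbed fixed-point identity $\psi_N = F_z(\psi_N) + \epsilon_N$ in $\Ba$ with $\|\epsilon_N\|_\Ba = \O{N^{-1/2} + E_N} \to 0$, where $E_N = d_{BL}(\mu_{W_N},\mu_w)$. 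Since $F_z$ is a $\kappa$-contraction,
\[
\|\psi_N - \phi^*\|_\Ba \leq \kappa\,\|\psi_N - \phi^*\|_\Ba + \|\epsilon_N\|_\Ba \qquad\text{hence}\qquad \|\psi_N - \phi^*\|_\Ba \leq (1-\kappa)^{-1}\|\epsilon_N\|_\Ba \to 0,
\]
which in particular yields the uniform-in-$u\in[0,1]$ convergence claimed for each fixed $x$.

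The main technical obstacle is upgrading the pointwise concentration to the full $\Ba$-norm: the supremum runs over the unbounded set $[0,\infty)^2$ and the weight $1/\sqrt{1+u}$ dampens only the second variable, so the tail in $x$ must be handled either by exploiting that $g_N - \E\,g_N$ depends on $x$ only through the $C_L$-Lipschitz coefficients $f(x,w_k)$ (so that chaining over $x$ is feasible) or by restricting the supremum to a slowly growing compact region as above, and then carefully balancing the scales $\varepsilon_N, M_N, M'_N$ with $N$. Note that for the convergence half of the proposition alone, the pointwise $L^2$ estimate already suffices, because $\epsilon_N$ arises from an integral against $\mu_w$ and the Bessel kernel that smooths the random fluctuations into a deterministic norm; the stronger $\Ba$-norm concentration is primarily needed for the second claim of the proposition.
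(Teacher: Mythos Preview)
Your concentration argument takes a genuinely different route from the paper. The paper does not use a martingale decomposition; instead it expands the covariance
\[
A_{k,l}=\E\bigl[e^{\iota u r^N_{kk}}e^{\iota u r^N_{ll}}\bigr]-\E\bigl[e^{\iota u r^N_{kk}}\bigr]\E\bigl[e^{\iota u r^N_{ll}}\bigr]
\]
by reusing the Bessel--Schur identity from Proposition \ref{pap1-analyticstep1} twice (once for each index) and then the same interpolation, obtaining $|A_{k,l}|\le C\,q_{N,\lambda}+q^2_{N,\lambda}=O(\sqrt{u}/\sqrt{N})$. Since $\E|g_N-\E g_N|^2=\tfrac{1}{N^2}\sum_{k,l}f(x,w_k)f(x,w_l)A_{k,l}$, the bound follows by $\sup|A_{k,l}|$. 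Your Efron--Stein / row-revealing martingale approach is more standard RMT machinery and yields a comparable pointwise rate; in fact your chaining over an $\varepsilon$-net to pass from pointwise to $\|\cdot\|_\Ba$ is more explicit than the paper's ``taking the norm'' step, which silently uses that the constant in the pointwise bound is uniform in $x$ and has the $\sqrt{u}$ factor absorbed by $1/\sqrt{1+u}$. One small correction: the almost sure bound $|D_k|=O\bigl(C_fu/(N\eta^2)\bigr)$ is not quite right, because $\|\Delta_k\|_{\mathrm{op}}$ is of order $\sqrt{\mathrm{deg}(k)/\lambda}$, which is unbounded in the sparse regime; you should state it as an $L^2$ bound $\E|D_k|^2=O(N^{-2})$ instead, which is all that orthogonality requires.

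For the convergence half you and the paper do exactly the same thing: replace $\E[e^{\lambda g_N}]$ by $e^{\lambda\E g_N}$ using the concentration just proved, recognise the resulting equation as $\psi_N=F_z(\psi_N)+\epsilon_N$, and invoke the contraction for $\eta$ large. The paper then appeals to the identity theorem (analytic continuation in $z$) to carry the conclusion from the region where $F_z$ contracts to all of $\C^+$; you omit this step, so as written your argument only delivers the convergence for sufficiently large $\eta$, and you should add the analytic-continuation sentence to cover arbitrary $z\in\C^+$ as in the statement.
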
 

\begin{proof}[Proof of Proposition \ref{pap1-Banachconcentration}]
Let $\delta_N(x,u,z)$ denote the error 
\[
\delta_N(x,u,z) := \e^{\lambda g_N(x,u,z)} - \e^{\lambda \E{[ g_N(x,u,z)]}}.
\]
Let $1\leq k\neq l \leq N$ and consider the covariance
\[
A_{k,l} :=\E[\e^{\ota ur_{kk}^N}\e^{\ota ur_{ll}^N}] - \E[\e^{\ota ur_{kk}^N}]\E[\e^{\ota ur_{kk}^N}].
\]
Using \eqref{pap1-expBesselcentered} for the first term and Proposition \ref{pap1-analyticstep1} for the second term, we get 
\begin{align}\label{pap1-eq:doubleintegral}
\begin{split}
    &A_{k,l} = \\
    &-\E [T_j] - \E[ T_k] + u\int\int \frac{\J(2\sqrt{uv_1})}{\sqrt{v_1}}\frac{\J(2\sqrt{uv_2})}{\sqrt{v_2}}\e^{\ota(v_1+v_2)z}\E\left[\e^{\ota v_1\sum\limits_{l\neq j}\tilde{r}_{ll}^{N-1}a_{jl}^2 + \ota v_2\sum\limits_{l\neq k}\tilde{r}_{ll}^{N-1}a_{kl}^2  }\right]\dd{v_1}\dd{v_2}\\
    &+ \E[\tilde{T}_j] + \E[\tilde{T}_k] - u\int\int \frac{\J(2\sqrt{uv_1})}{\sqrt{v_1}}\frac{\J(2\sqrt{uv_2})}{\sqrt{v_2}}\e^{\ota(v_1+v_2)z}\E\left[ \e^{\lambda g_N(w_j,\frac{v_1}{\lambda},z)+\lambda g_N(w_k,\frac{v_2}{\lambda},z)} \right]{}\dd{v_1}\dd{v_2},
\end{split} 
\end{align}
where $T_i$ and $\tilde{T}_i$ are the RHS of equations \eqref{pap1-expBesselcentered} and \eqref{pap1-expBesselinterpolated} respectively, and differ by the error $q_{N,\lambda}(u,z)$ in expectation. In the first double integral of \eqref{pap1-eq:doubleintegral}, one can do the interpolation argument term-wise, and obtain the error $C_Iq_{N,\lambda}^2(u,z) + q_{N,\lambda}^2(u,z)$ by making a difference with the second double integral in \eqref{pap1-eq:doubleintegral}, where $C_I$ is the constant upper bound to $\tilde{T}_k$ for any $k$. Thus, we have that 
\begin{align}\label{pap1-eq:Aklerror}
    |A_{k,l}| \leq C_I'q_{N,\lambda}(u,z) + q_{N,\lambda}^2(u,z).
\end{align}
Using inequality \ref{pap1-expineqsum} on $\delta_N(x,u,z)$ gives us
\begin{align*}
    \E[|\delta_N(x,u,z)|^2] = \E \left[ \left| \e^{\lambda g_N(x,u,z)}  - \e^{\lambda \E\left[g_N(x,u,z)\right]}  \right|^2  \right]
\leq C_1\E\left[ |g_N(x,u,z) - \E[ g_N(x,u,z)] |^2 \right].
\end{align*}
since $|g_N(x,v,z)|\leq C_f$ and $C_1 = \e^{2\lambda C_f}$. We can now bound this by using the definition of $g_N$ to get 
\begin{align}\label{pap1-eq:deltaNexpansion}
   \E[|\delta_N(x,u,z)|^2]  &\leq  \frac{C_1}{N^2}\left|\sum_{k,l=1}^N\E[f(x,w_k)\e^{\ota ur^N_{kk}}f(x,w_l)\e^{\ota ur^N_{ll}}] - \E[f(x,w_k)\e^{\ota ur^N_{kk}}]\E[f(x,w_l)\e^{\ota ur^N_{ll}}]\right|.
\end{align}
Since $f$ is deterministic, we can pull it out of the expectation and take it common, giving us 
\begin{align*}
    \E[|\delta_N(x,u,z)|^2]  &\leq  \frac{C_1}{N^2}\left|\sum_{k,l=1}^N f(x,w_k)f(x,w_l) A_{k,l} \right|.
\end{align*}
We can conclude using triangle inequality that 
\begin{align}\label{pap1-eq:deltaNerror}
    \E[|\delta_N(x,u,z)|^2] \leq C_1C_f^2\sup_{k,l}|A_{k,l}| = \bigO\left(   \frac{\lambda\sqrt{u}}{\eta^{5/2}\sqrt{N}} \right).
\end{align}
For $\eta>0$ sufficiently large, taking the norm, we get 
\begin{equation}\label{pap1-gNexpconc}
\E\left[ \left\| \e^{\lambda g_N} - \e^{\lambda\E [g_N]}   \right\|^2   \right] = o(1).
\end{equation}
However, $\delta_N $ is a bounded analytic function in $[0,\infty)^2\times\C^+$. Using the identity theorem from complex analysis , which states that if two holomorphic functions agree in an open set of the domain then they must agree everywhere on the domain, we have that since $\delta_N \to 0$ on an open set of the upper-half complex plane, it must approach 0 everywhere on the upper-half plane. Since the error in \eqref{pap1-eq:deltaNerror} can be absorbed in $\tilde{q}_{N,\lambda}(u,z)$, using \ref{pap1-expineqsum} gives us 
\begin{equation}\label{pap1-recursionafterconcentration}
\begin{split}
&\E[ g_N(x,u,z)]\\
&= d_f(x) - \int_0^{\infty}f(x,y)\e^{-\lambda d_f(y)}\left(\Khorunzkyintegral\e^{\ota vz}\e^{\lambda\E \left[g_N\left(y,\frac{v}{\lambda},z\right)\right]} \dd{v}\right)\mu_w(\dd{y}) \hspace{0.1cm} + \tilde{q}_{N,\lambda}(u,z),
\end{split}
\end{equation}
where the error vanishes in the norm as 
\[
\|\tilde{q}_{N,\lambda}\|  = \| q_{N,\lambda}(u,z) + \bigO(E_N)\| \leq \sup_{x,u\geq 0} \left| C\frac{\lambda\sqrt{u}}{\eta^{5/2}\sqrt{N}} \right| + E_N = o(1).
\]
Now, consider the function $\tilde{G}_N(x,u)$ and the error 
\[
\Delta_N(u) := \tilde{G}_N(x,u) - \E[\tilde{G}_N(x,u)].
\]
By definition of $\tilde{G}_N$, one can see that expanding $\Delta_N(u)$ will yield an expression similar to \eqref{pap1-eq:deltaNexpansion} modulo $f$, and so, using \eqref{pap1-eq:Aklerror} again, we get
that \[
\E[|\Delta_N|^2] \leq C_1C_f^2\sup_{k,l}|A_{k,l}| = \bigO\left(   \frac{\lambda\sqrt{u}}{\eta^{5/2}\sqrt{N}} \right).
\]
By taking the norm and again using identity theorem, we get that $\Delta_N$ vanishes in $[0,\infty)^2\times\C^+$ and thus 
\begin{equation}\label{pap1-GNconcentration}
\E\left[ \left\| \tilde{G}_N- \E [\tilde{G}_N]   \right\|^2   \right] =o(1).
\end{equation}
A quick inspection of \eqref{pap1-eq:deltaNexpansion} shows that in fact we also have the concentration for $g_N$, since the RHS is precisely the upper bound on 
\begin{align*}
    \E[|g_N(x,u,z) - \E[ g_N(x,u,z)]|^2], 
\end{align*} and so, 
\begin{equation}\label{pap1-gNconcentration}
\E\left[ \left\| g_N- \E[g_N]  \right\|^2   \right] = o(1).
\end{equation}
Finally, comparing \eqref{pap1-recursionafterconcentration} with the contraction mapping \eqref{pap1-contractionmap}, we have the following:\[
\begin{split}
\E[ g_N(x,u,z)] &= F_z(\E[ g_N(x,u,z)]) + \tilde{q}_{N,\lambda}(u,z),\\
\phi^*(x,u) &= F_z(\phi^*(x,u)). 
\end{split}
\]
So, with $\eta>0$ large enough and $F_z$ being a contraction on $B\subset \Ba$ of radius $C_f$, we have 
\begin{align*}
\|\E[g_N]- \phi^*\| \leq \|F_z(\E [g_N]) - F_z(\phi^*)\| + \|\tilde{q}_N\|,
\end{align*}
and consequently, 
\begin{align*}
    \frac{1}{2}\|\E[g_N]- \phi^*\| \leq \|\tilde{q}_N\|.
\end{align*}
Thus, since $\|\E g_N\| \leq C_f$,
\[
\|\E[g_N]- \phi^* \| \xrightarrow{N\to\infty} 0.
\]
As a quick remark, notice that  \begin{equation}\label{eq:phibdd}
    \|\phi^*\| \leq C_f, 
\end{equation} 
since $g_N$ is bounded. 

Now, since $\E[ g_N(x,u,z)]$ is an analytic function on $[0,\infty)^2\times\C^+$, we have $\lim_{N\to\infty}\E[ g_N(x,u,z)]$ is an analytic function. Again from the identity theorem of complex analysis, since $\lim_{N\to\infty}\E [g_N]$ and $\phi^*$ are analytic and agree on an open set of $[0,\infty)^2\times\C^+$, they agree everywhere in the complex domain $[0,\infty)^2\times\C^+$, and thus the convergence holds for any $z \in \C^+$. Note that \emph{for a fixed $z\in\C^+$}, although both the functionals $\E [g_N]$ and $\phi^*$ live in $(\Ba,\|\cdot\|_{\Ba})$, the domain of $\phi^*$ is $[0,\infty)^2\times\C^+$ since $\E g_N$ has the domain $[0,\infty)^2\times\C^+$. Now, for each $z\in\C^+$, fixing $u$ in the compact set $[0,1]$ gives us that for each $x\in[0,\infty)$ and uniformly over $u\in[0,1]$,
\begin{equation}\label{eq:g_N}
\E[ g_N(x,u,z)] \xrightarrow{N\to\infty} \phi^*(x,u) 
\end{equation}
\end{proof}
We can now prove Theorem \ref{pap1-Resolventtheorem}.
\begin{proof}[Proof of Theorem \ref{pap1-Resolventtheorem}]
\noindent Equation \eqref{pap1-GNconcentration} proves the concentration statement of Theorem \ref{pap1-Resolventtheorem}. 
Recall that we had shown that 
\[
\E\left[\e^{\ota u r^N_{jj}}\right] = 1 - \e^{-\lambda d_j}\Khorunzkyintegral\e^{\ota vz}\E\left[\e^{\lambda g_N\left(w_j,\frac{v}{\lambda},z\right)}\right] \dd{v} + q_{N,\lambda}(u,z),
\]
and so, 
\begin{align*}
\E [G_N(u,z)] &= \frac{1}{N}\sum_{j=1}^N\E[\e^{\ota u r^N_{jj}}] \\
&=1 - \frac{1}{N}\sum_{j=1}^N\e^{-\lambda d_j}\Khorunzkyintegral\e^{\ota vz}\E\left[\e^{\lambda g_N\left(w_j,\frac{v}{\lambda},z\right)}\right] \dd{v} + q_{N,\lambda}(u,z). \numberthis\label{pap1-eq:G_N}
\end{align*}
Next, we see that the function 
\begin{align*}
    \tilde{I}_g(y) = \e^{-\lambda d_f(y)}\Khorunzkyintegral \e^{\ota vz}\e^{\lambda \phi^*(y,v/\lambda)} \dd{v}
\end{align*}
is Lipschitz by using an argument similar to Lemma \ref{pap1-gyLipschitz}. Thus, we get
\begin{align*}
\E [G_N(u,z)]= 1 - \int_0^{\infty}\e^{-\lambda d_f(y)}\left(\Khorunzkyintegral\e^{\ota vz}\E\left[\e^{\lambda g_N\left(y,\frac{v}{\lambda},z\right)}\right] \dd{v}\right)\mu_w(\dd{y}) \hspace{0.1cm} + \tilde{q}_{N,\lambda}(u,z).
\end{align*}
Since from Proposition \ref{pap1-Banachconcentration} we have concentration for $g_N$, using inequality \eqref{pap1-expineqsum} we have that 
\[
\E [G_N(u,z)] =1 - \int_0^{\infty}\e^{-\lambda d_f(y)}\Khorunzkyintegral\e^{\ota vz}\e^{\lambda\E \left[g_N\left(y,\frac{v}{\lambda},z\right)\right]} \dd{v}\hspace{0.2cm}\mu_w(\dd{y}) + \tilde{q}_{N,\lambda}(u,z).
\]
Finally, taking the limit $N\to\infty$ gives us
\begin{equation}\label{eq:limGN}
\lim_{N\to\infty}\E [G_N(u,z)] = 1 -\int_0^{\infty}\e^{-\lambda d_f(y)}\Khorunzkyintegral \e^{\ota vz}\e^{\lambda \phi^*(y,v/\lambda)} \dd{v}\hspace{0.2cm}\mu_w(\dd{y}),
\end{equation}
completing the proof of Theorem \ref{pap1-Resolventtheorem}.
\end{proof}
\

\subsection{\bf Deriving the expression for the Stieltjes Transform} Since we took $u$ to be in $[0,1]$, we can take a derivative with respect to $u$ and evaluate it at $u=0$. Recall from equation \eqref{pap1-eq:G_N} that we have
\begin{align*}
\begin{split}
    \E [G_N(u,z)] &= \frac{1}{N}\E\sum_{j=1}^N\e^{\ota u r^N_{jj}} \\
    &= 1 - \frac{1}{N}\sum_{j=1}^N\e^{-\lambda d_j}\Khorunzkyintegral\e^{\ota vz}\E\left[\e^{\lambda g_N\left(w_j,\frac{v}{\lambda},z\right)}\right] \dd{v} + q_{N,\lambda}(u,z).
\end{split}
\end{align*}
Note that by definition, $G_N(u,z)$ is a bounded function, and thus by DCT, limit operations can be interchanged with expectation. We would like to take a derivative with respect to $u$ and evaluate at $u=0$ to extract out $\tr(\R_{\bA_N}(z))$ from the LHS of \eqref{pap1-eq:G_N}. On the other hand, we would first like to take $N\to\infty$ for the RHS to remove the error term. To interchange these operations, we have the following result. 
\begin{proposition}\label{pap1-MooreOsgoodstep}
    Both the limits $\lim_{N\to\infty} \left.\frac{\partial}{\partial u} \E [G_N(u,z)]\right|_{u=0}$ and $\left.\frac{\partial}{\partial u} \lim_{N\to\infty} \E [G_N(u,z)]\right|_{u=0}$ exist and are equal.
\end{proposition}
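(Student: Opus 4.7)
The plan is to apply the Vitali–Porter theorem together with Weierstrass's theorem on derivatives of uniformly convergent holomorphic functions.

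First, for each fixed $z \in \C^+$ with $\Im(z) = \eta > 0$, I would verify that $u \mapsto \E[G_N(u,z)]$, a priori defined only on $[0,1]$, extends to an entire function of $u \in \C$. Property (iv) of Lemma \ref{resolventproperties} gives $|r_{jj}^N(z)| \leq 1/\eta$ almost surely, so each $u \mapsto \e^{\ota u r_{jj}^N(z)}$ is entire with $|\e^{\ota u r_{jj}^N(z)}| \leq \e^{|u|/\eta}$; dominated convergence then allows $\partial_u$ to be interchanged with $\E$, showing that $u \mapsto \E[G_N(u,z)]$ is entire. The same bound gives the locally uniform estimate $|\E[G_N(u,z)]| \leq \e^{R/\eta}$ on $\overline{B(0,R)} \subset \C$, uniformly in $N$.

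Second, I would check that the candidate limit
\[
G^*(u,z) := 1 - \int_0^\infty \e^{-\lambda d_f(y)} \sqrt{u}\int_0^\infty \frac{\J(2\sqrt{uv})}{\sqrt{v}}\e^{\ota v z} \e^{\lambda \phi^*(y, v/\lambda)}\dd v\, \mu_w(\dd y)
\]
appearing on the right-hand side of \eqref{eq:limitGN} is itself analytic in $u$ on a complex neighborhood of $0$. The identity $\sqrt{u}\,\J(2\sqrt{uv})/\sqrt{v} = \sum_{k\geq 0}(-1)^k u^{k+1} v^k/(k!(k+1)!)$ exhibits the $u$-dependence as an entire function with rapidly decaying coefficients in $v$, while $|\e^{\ota v z}| \leq \e^{-\eta v}$ together with the a priori bound $\|\phi^*\|_{\Ba} \leq C_f$ from \eqref{eq:phibdd} justifies termwise differentiation under both the $v$- and $\mu_w$-integrals.

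Third, Theorem \ref{pap1-Resolventtheorem} (already established above, and whose statement does not invoke the present proposition) gives $\E[G_N(u,z)] \to G^*(u,z)$ uniformly on $u \in (0,1]$, and in particular pointwise convergence on $(0,1]$, a set with accumulation points inside $B(0,R)$ for any $R > 1$. By Vitali–Porter, the locally uniformly bounded family of entire functions $\{\E[G_N(\cdot,z)]\}_N$ converges uniformly on compact subsets of $\C$ to $G^*(\cdot, z)$, and by Weierstrass the derivatives converge uniformly on compact subsets as well. Evaluating at $u=0$ yields
\[
\lim_{N\to\infty}\left.\frac{\partial}{\partial u}\E[G_N(u,z)]\right|_{u=0} \;=\; \left.\frac{\partial}{\partial u}G^*(u,z)\right|_{u=0} \;=\; \left.\frac{\partial}{\partial u}\lim_{N\to\infty}\E[G_N(u,z)]\right|_{u=0},
\]
so both iterated limits exist and agree.

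The principal obstacle is verifying analyticity of the limit $G^*(u,z)$ in $u$ on a genuine complex neighborhood of $0$; once this is in place, Vitali–Porter and Weierstrass do the rest. The pre-limit is entire essentially for free from property (iv) of the resolvent, but the limit involves two nested integrals and an analytic-in-$u$ Bessel factor, so the interchange of differentiation and integration has to be justified carefully using the $\Ba$-bound on $\phi^*$ together with the exponential decay in $v$ coming from $\Im(z) > 0$.
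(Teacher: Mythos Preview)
Your approach is correct and genuinely different from the paper's. The paper works entirely in real analysis: it forms the difference quotients $H_N(u,z)=(\E[G_N(u,z)]-\E[G_N(0,z)])/u$ and $H(u,z)=(G(u,z)-G(0,z))/u$, argues that $H_N\to H$ uniformly on $(0,1]$ and that $\lim_{u\to 0}H_N(u,z)=\E[\tr\R_{\bA_N}(z)]$ exists for each $N$, and then invokes the Moore--Osgood theorem (Rudin, Theorem~7.11) to swap the two limits. Your route instead extends each $\E[G_N(\cdot,z)]$ to an entire function via the resolvent bound, uses the local uniform boundedness $|\E[G_N(u,z)]|\le \e^{|u|/\eta}$ together with pointwise convergence on $(0,1]$ (supplied by Theorem~\ref{pap1-Resolventtheorem}) to feed Vitali--Porter, and then reads off convergence of derivatives from Weierstrass.

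What your approach buys: it is more robust, since Vitali--Porter only needs pointwise convergence on a set with an accumulation point plus local boundedness, whereas the paper's Moore--Osgood argument needs uniform convergence of the difference quotients $H_N$ on $(0,1]$, which is somewhat delicate near $u=0$ (the division by $u$ must be absorbed by the $\sqrt{u}$ in the error term). Note also that your Step~2 is, strictly speaking, unnecessary: Vitali--Porter already delivers holomorphy of the limit as part of its conclusion, so you do not need to verify analyticity of $G^*(\cdot,z)$ separately. What the paper's approach buys is that it stays entirely within real-variable tools and avoids the complex-analytic machinery.
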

\begin{proof}
We fix a $z\in\C^+$. Now, $\lim_{N\to\infty} \E [G_N(u,z)]$ exists due to the RHS of \eqref{pap1-eq:G_N}, which we denote by $G(u,z)$. If we define $H_N(u,z)$ and $H(u,z)$ as 
\begin{align*}
H_N(u,z) &= \frac{\E [G_N(u,z)] - \E [G_N(0,z)]}{u},\\
 H(u,z) & = \frac{G(u,z)-G(0,z)}{u}.
\end{align*}
Then, 
\begin{align*}
\lim_{u\to 0}H_N(u,z) &=\left.\frac{\partial}{\partial u}\E [G_N(u,z)]\right|_{u=0},\\
\lim_{u\to 0}H(u,z) &=\left.\frac{\partial}{\partial u}G(u,z)\right|_{u=0}.
\end{align*}
We would like to claim 
\[
\lim_{N\to\infty}\left.\frac{\partial}{\partial u}\E [G_N(u,z)]\right|_{u=0} = \left.\frac{\partial}{\partial u}G(u,z)\right|_{u=0}.
\]
Thus, we want to interchange the order of limits. Note that 
\[
\lim_{N\to\infty}H_N(u,z) = H(u,z)
\]
uniformly in $u \in (0,1]$, and 
\[
\lim_{u\to 0} H_N(u,z) = \left.\frac{\partial}{\partial u}\E [G_N(u,z)]\right|_{u=0} = \E[\tr(\R_{\bA_N}(z))]
\]
for each $N$, where the limit can be taken inside the expectation using dominated convergence. Thus, using \citet[Theorem 7.11]{babyrudin}, we have that the limits $\lim_{u\to 0}H(u,z)$ and $\lim_{N\to\infty}\E[\tr(\R_{\bA_N}(z))]$ 
exist and are equal. 
\end{proof}

We are now ready to prove Corollary \ref{pap1-Stieltjescorollary}.
\begin{proof}[Proof of Corollary \ref{pap1-Stieltjescorollary}]
We now do precisely as we stated before Proposition \ref{pap1-MooreOsgoodstep}. We evaluate the derivative at $u=0$ and then take $N\to\infty$ on the LHS of \eqref{pap1-eq:G_N}, and we do the reverse for the RHS of \eqref{pap1-eq:G_N}. Note that since $\lim_{N\to\infty}\mu_{N, \lambda} = \mu_{\lambda}$ in probability, $\St_{\bA_N}(z)\to S_{\mu_{\lambda}}(z)$ and also $\bar\St_{\bA_N}(z)\to S_{\mu_{\lambda}}(z)$ as $N\to\infty$ for all $z\in \C^+$. 
Thus, we then obtain using Proposition \ref{pap1-MooreOsgoodstep}
\begin{align}\label{pap1-eq:derivativetobeevaluated}
\begin{split}
\ota\St_{\mu_{\lambda}}(z)=\ota \lim_{N\to\infty}\bar \St_{\bA_N}(z) &\overset{\eqref{eq:defstieltjes}}= \ota\lim_{N\to\infty}\E\tr(\R_{\bA_N}(z))\overset{\eqref{eq:derivativeST}} = \lim_{N\to\infty} \left.\frac{\partial}{\partial u} \E [G_N(u,z)]\right|_{u=0}\\
&= \left.\frac{\partial}{\partial u} \lim_{N\to\infty} \E [G_N(u,z)]\right|_{u=0}\\
&\overset{\eqref{eq:limGN}}=-\frac{\partial}{\partial u}\left. \int_0^{\infty}\e^{-\lambda d_f(y)}\Khorunzkyintegral\e^{\ota vz}\e^{\lambda \phi^*_z\left(y,\frac{v}{\lambda}\right)} \dd{v} \hspace{0.2cm}\mu_w(\dd{y})\right|_{u=0}\\
&=-\left. \int_0^{\infty}\e^{-\lambda d_f(y)}\frac{\partial}{\partial u}\Khorunzkyintegral\e^{\ota vz}\e^{\lambda \phi^*_z\left(y,\frac{v}{\lambda}\right)} \dd{v} \hspace{0.2cm}\mu_w(\dd{y})\right|_{u=0}.
\end{split}
\end{align}
We now wish to evaluate the derivative on the RHS of \eqref{pap1-eq:derivativetobeevaluated}. Let $K(u)$ denote 
\begin{equation}\label{pap1-integraltobediff}
K(u):= \Khorunzkyintegral\e^{\ota vz}\e^{\lambda \phi^*_z\left(y,\frac{v}{\lambda}\right)} \dd{v}.
\end{equation}
Observe that 
\begin{align}\label{pap1-eq:Besselfubini}
\sum_{k\geq 0}\int_0^{\infty} \frac{v^k}{k!(k+1)!}\e^{-\eta v}\dd{v} = \sum_{k\geq 0}\frac{\Gamma(k+1)}{k!(k+1)!\eta^k}\leq \e^{1/\eta}
\end{align}
for $\eta>0$ by a change of variables. If we expand the Bessel function as defined in \eqref{eq:bessel} in equation \eqref{pap1-integraltobediff} and take the absolute value, we observe using \eqref{pap1-eq:Besselfubini} and using  $|\phi^*(x,u)|\leq C_f$ (from \eqref{eq:phibdd}), that we can use Fubini's Theorem to interchange the integral with the summand. Thus, we have
\begin{align*}
    K(u) &= \Khorunzkyintegral\e^{\ota vz}\e^{\lambda \phi^*_z\left(y,\frac{v}{\lambda}\right)} \dd{v} \\
    &= \sqrt{u}\int_0^{\infty} \frac{1}{\sqrt{v}}\sum_{k=0}^{\infty}\frac{(-1)^k(\sqrt{uv})^{2k+1}}{k!(k+1)!}\e^{\ota vz}\e^{\lambda \phi^*_z\left(y,\frac{v}{\lambda}\right)} \dd{v}\\
    &= \sum_{k=0}^{\infty}\frac{(-1)^ku^{k+1}}{k!(k+1)!}\int_0^{\infty} v^k\e^{\ota vz}\e^{\lambda \phi^*_z\left(y,\frac{v}{\lambda}\right)} \dd{v}.
\end{align*}
Denote by $I_k(y)$ the integral $$I_k(y):=\int v^k\e^{\ota vz}\e^{\lambda \phi^*_z\left(y,\frac{v}{\lambda}\right)} \dd{v}. $$ Therefore, 
\begin{equation}\label{eq:ku}
\frac{K(u)}{u} =\sum_{k=0}^{\infty}\frac{(-1)^ku^k}{k!(k+1)!}I_k(y)= I_0(y) + \sum_{k\geq 1}\frac{(-1)^ku^k}{k!(k+1)!}I_k(y) =:I_0(y) +\sum_{k=1}^{\infty}a_k(u),
\end{equation}
where $a_k(u)$ denotes 
\[
a_k(u) := \frac{(-1)^ku^kI_k(y)}{k!(k+1)!}. 
\]
Note that for any $k$, we have that $I_k(y)$ is finite since 
\begin{align*}
    |I_k(y)| \leq \int_0^{\infty}v^k\e^{-\eta v}\e^{C_f\lambda} \dd{v} =\frac{\e^{C_f\lambda}}{\eta^{k+1}}\Gamma(k+1).
\end{align*}
Since $K(0)=0$ and by \eqref{eq:ku} it follows that 
\begin{equation}\label{eq:KIA}
   \left.\frac{\partial}{\partial u}K(u)\right|_{u=0} = \lim_{u\to 0}\frac{K(u)}{u}= I_0(y)+\lim_{u\to0}\sum_{k\geq 1}a_k(u), 
\end{equation}

Therefore we would like to evaluate $\lim_{u\to0}\sum_{k\geq 1}a_k(u)$. Note that $|a_k(u)| \leq \frac{\e^{ C_f\lambda}\Gamma(k+1)}{\eta^{k+1}k!(k+1)!}$, as $u$ is bounded by 1.
Note that the series 
\[
\sum_{k \geq 1} \frac{\Gamma(k+1)\e^{ C_f\lambda}}{k!(k+1)!\eta^{k+1}} = \frac{\e^{C_f\lambda}}{\eta^2}\sum_{k\geq 0} \frac{1}{\eta^k (k+2)!} \leq \frac{\e^{C_f\lambda}\e^{\frac{1}{\eta}}}{\eta^2}
\]
converges, and consequently by the dominated convergence theorem, we have
\[
\lim_{u\to0}\sum_{k\geq 1}a_k(u) = \sum_{k\geq 1}\lim_{u\to0}a_k(u) = 0.
\]
Thus by \eqref{eq:KIA} we have
\[
\lim_{u\to 0}\frac{K(u)}{u} =I_0(y).
\] 
Therefore we get
\[
\ota\St_{\mu_{\lambda}}(z) = -\int_0^{\infty}\e^{-\lambda d_f(y)} I_0(y) \mu_w(\dd{y}) -\int_0^{\infty}\e^{-\lambda d_f(y)} \int_0^{\infty}\e^{\ota vz}e^{\lambda \phi_z^*(y,\frac{v}{\lambda})}\dd{v}\hspace{0.2cm}\mu_w(\dd{y}).
\]
To conclude the argument, we use Lemma \ref{stieltjesconvergencefact} with Theorem \ref{pap1-momenttheorem} to state that $\St_{\bA_N}(z)$ converges in probability to $\St_{\mu_{\lambda}}(z)$ for each $z\in\C^+$.
\end{proof}
We conclude with the proof of Corollary \ref{pap1-Stieltjeslargelambda}
\begin{proof}[Proof of Corollary \ref{pap1-Stieltjeslargelambda}]
    From Corollary \ref{pap1-Stieltjescorollary}, we have 
    \begin{align*}
    \St_{\mu_{\lambda}}(z) = \ota\int_0^{\infty}\int_0^{\infty}\e^{\ota vz}\e^{-\lambda d_f(y)+\lambda\phi^*(y,v/\lambda)}\dd{v}\hspace{0.2cm}\mu_w(\dd{y}).
    \end{align*}
    Recall that 
    \begin{align}\label{pap1-eq:phi*}
    \phi^*(x,u) = d_f(x) - \int_0^{\infty}f(x,y)\e^{-\lambda d_f(y)}\left(\Khorunzkyintegral\e^{\ota vz}\e^{\lambda \phi^*\left(y,\frac{v}{\lambda}\right)} \dd{v}\right)\mu_w(\dd{y})
   \end{align}
    is the unique analytical solution of the fixed point equation as in \eqref{pap1-contractionmap}.
    Expanding the Bessel function $\J(x)$ in \eqref{pap1-eq:phi*} using \eqref{eq:bessel} gives 
    \begin{align}\label{pap1-eq:besselexpandedphi*}
    \phi^*(x,u) = d_f(x) - \int_0^{\infty}f(x,y)\e^{-\lambda d_f(y)}\left(\int_0^{\infty}\sum_{k\geq 0}\frac{(-1)^ku^{k+1}v^k}{k!(k+1)!} \e^{\ota vz}\e^{\lambda \phi^*\left(y,\frac{v}{\lambda}\right)} \dd{v}\right)\mu_w(\dd{y}).
    \end{align}
    We would like to interchange the summand and integral with respect to $v$ in \eqref{pap1-eq:besselexpandedphi*}. Using the $z=
    \zeta+\ota \eta$ for some $
    \zeta\in \mathbb R$ and $\eta>0$, we have that 
    \[
    \sum_{k\geq 0}\int_0^{\infty} \left| \frac{(-1)^ku^{k+1}v^k}{k!(k+1)!}\e^{\ota vz}\e^{-\lambda d_f(y)+\lambda\phi^*(y,v/\lambda)} \right|\dd{v} \leq  \e^{C_f\lambda - \lambda d_f(y)}\sum_{k\geq 0}\frac{u^{k+1}\Gamma(k+1)}{k!(k+1)!\eta^{k+1}} \leq \frac{u}{\eta}\e^{C_f \lambda - \lambda d_f(y)}\e^{u/\eta}.
    \]
    Thus, by Fubini's Theorem, we can interchange the summand with the integral with respect to $v$, giving us 
    \begin{align}\label{pap1-eq:phi*afterFubini}
        \phi^*(x,u) = d_f(x) - \int_0^{\infty}f(x,y)\e^{-\lambda d_f(y)}\left(\sum_{k\geq 0}\frac{(-1)^ku^{k+1}}{k!(k+1)!} \int_0^{\infty}v^k\e^{\ota vz}\e^{\lambda \phi^*\left(y,\frac{v}{\lambda}\right)} \dd{v}\right)\mu_w(\dd{y}).
    \end{align}

    Now, denote by $\mathcal{H}^{\lambda}(z,y)$ the function 
    \begin{align}\label{pap1-eq:Hz(y)}
    \mathcal{H}^{\lambda}(z,y) := \ota\int_0^{\infty}\e^{\ota vz}\e^{-\lambda d_f(y)+\lambda\phi^*(y,v/\lambda)}\dd{v}. 
    \end{align}
    Then, by Corollary \ref{pap1-Stieltjescorollary}, we can see that $\St_{\mu_{\lambda}}(z) = \int_0^{\infty}\mathcal{H}^{\lambda}(z,y)\mu_w(\dd{y})$. From \eqref{pap1-eq:phi*afterFubini} we get that 
    \begin{align*}
        \phi^*(x,u) = d_f(x) &- u\int_0^{\infty} f(x,y)\int_0^{\infty} \e^{\ota vz}\e^{-\lambda d_f(y)+\lambda\phi^*(y,v/\lambda)}\dd{v}\hspace{0.2cm} \mu_w(\dd{y}) \\
    &- \int_0^{\infty} f(x,y) \sum_{k\geq 1}\int_0^{\infty}\frac{(-1)^k u^{k+1}v^k}{k!(k+1)!}\e^{\ota vz}\e^{-\lambda d_f(y)+\lambda\phi^*(y,v/\lambda)}\dd{v}\hspace{0.2cm} \mu_w(\dd{y}),
    \end{align*}
    and so, we can write 
    \begin{align}\label{pap1-eq:phi*expansion}
        \phi^*(x,u) &= d_f(x) + \ota u\int_0^{\infty} f(x,y)\mathcal{H}^{\lambda}(z,y)\mu_w(\dd{y}) + T(x,u,\lambda,z)
    \end{align}
    where 
    \begin{align}\label{pap1-eq:T(x,u,z)}
    T(x,u,\lambda,z):= - \int_0^{\infty}f(x,y) \sum_{k\geq 1}\int_0^{\infty}\frac{(-1)^k u^{k+1}v^k}{k!(k+1)!}\e^{\ota vz}\e^{-\lambda d_f(y)+\lambda\phi^*(y, v/\lambda)}\dd{v}\hspace{0.2cm}\mu_w(\dd{y}).
    \end{align}
    Substituting $u=v/\lambda$ for $v\in\mathbb{R}_+$ in \eqref{pap1-eq:phi*expansion} and multiplying throughout by $\lambda$, we have 
    \begin{align*}
    -\lambda d_f(x)+\lambda\phi^*(x,v/\lambda) = \ota v\int_0^{\infty} f(x,y)\mathcal{H}^{\lambda}(z,y)\mu_w(\dd{y}) + \lambda T(x,v/\lambda,\lambda,z).
    \end{align*}
    We begin by claiming the following:
    \begin{claim}\label{pap1-claimbound}
        For any $x,u\geq 0$, we have 
        \begin{align}
        |\e^{-\lambda d_f(x) + \lambda\phi^*(x,u)}| \leq 1.
    \end{align}
    \end{claim}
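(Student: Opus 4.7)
The plan is to reduce the complex-modulus bound to a real-part inequality, and then establish the real-part bound by tracing $\phi^*$ back to the pointwise estimate on $g_N$ that uses only the sign of $f$ and the fact that diagonal resolvent entries lie in $\mathbb{C}^+$. Specifically, since $\lambda>0$ and $d_f(x)\ge 0$ is real, one has $|\e^{-\lambda d_f(x)+\lambda\phi^*(x,u)}|=\e^{\lambda(\Re\phi^*(x,u)-d_f(x))}$, so the claim reduces to $\Re\phi^*(x,u)\le d_f(x)$. I would prove the stronger modulus bound $|\phi^*(x,u)|\le d_f(x)$, from which the real-part bound is immediate.

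Next I would exploit the fact that $\phi^*(x,u)=\lim_{N\to\infty}\E[g_N(x,u,z)]$ established in Proposition \ref{pap1-Banachconcentration}. From \eqref{def:g_N},
\[
g_N(x,u,z)=\frac{1}{N}\sum_{k=1}^N f(x,w_k)\,\e^{\ota u\, r_{kk}^N(z)}.
\]
By property (iv) of Lemma \ref{resolventproperties}, $r_{kk}^N(z)\in\mathbb{C}^+$, so for any $u\ge 0$,
\[
|\e^{\ota u\, r_{kk}^N(z)}|=\e^{-u\,\Im r_{kk}^N(z)}\le 1.
\]
Since assumption \ref{item:A1} gives $f\ge 0$, the triangle inequality yields the deterministic bound
\[
|g_N(x,u,z)|\le \frac{1}{N}\sum_{k=1}^N f(x,w_k)=\int_0^\infty f(x,y)\,\mu_{W_N}(\dd y),
\]
so by Jensen, $|\E[g_N(x,u,z)]|\le\int f(x,y)\,\mu_{W_N}(\dd y)$.

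Finally I would pass to the limit in $N$. The norm convergence $\|\E g_N-\phi^*\|_{\Ba}\to 0$ proved in Proposition \ref{pap1-Banachconcentration} gives pointwise convergence at every $(x,u)$, since $|\E g_N-\phi^*|(x,u)\le\sqrt{1+u}\,\|\E g_N-\phi^*\|_{\Ba}$. On the right-hand side, the weak convergence $\mu_{W_N}\Rightarrow \mu_w$ from assumption \ref{item:A3}, combined with boundedness and continuity of $y\mapsto f(x,y)$ from assumption \ref{item:A1}, yields $\int f(x,y)\,\mu_{W_N}(\dd y)\to d_f(x)$. Passing to the limit thus gives $|\phi^*(x,u)|\le d_f(x)$, hence $\Re\phi^*(x,u)\le|\phi^*(x,u)|\le d_f(x)$, proving the claim.

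The proof is really a two-line chain of estimates, so there is no substantial obstacle. The only care needed is to justify the pointwise passage to the limit of $\E g_N$ at a given $(x,u)$ (rather than uniformly on a strip), which is immediate from the $\Ba$-norm convergence and the growth factor $\sqrt{1+u}$ in the definition of $\|\cdot\|_{\Ba}$. The conceptual content is that $\phi^*$ inherits from $g_N$ the deterministic majorant $d_f$, because the random phases $\e^{\ota u r_{kk}^N(z)}$ can only shrink moduli when $u\ge 0$ and $r_{kk}^N\in\mathbb{C}^+$.
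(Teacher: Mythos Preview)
Your proof is correct and follows essentially the same route as the paper: both bound $|g_N(x,u,z)|\le \frac{1}{N}\sum_k f(x,w_k)$ using $f\ge 0$ and $|\e^{\ota u r_{kk}^N}|\le 1$, then pass to the limit via the $\Ba$-norm convergence $\|\E g_N-\phi^*\|_{\Ba}\to 0$ from Proposition~\ref{pap1-Banachconcentration} together with $\frac{1}{N}\sum_k f(x,w_k)\to d_f(x)$, yielding $|\phi^*(x,u)|\le d_f(x)$. One minor remark: the fact that $r_{kk}^N(z)\in\C^+$ is property (i) of Lemma~\ref{resolventproperties} (the Herglotz property of diagonal resolvent entries), not property (iv).
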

    Then, one can see that 
    \begin{align*}
    |T(x,v/\lambda,\lambda,z)| \leq \int_0^{\infty}f(x,y) \frac{v}{\lambda\eta}\left(\sum_{k\geq1}\frac{v^k\Gamma(k+1)}{\eta^k\lambda^k k!(k+1)!} \right) \mu_w(\dd{y}) \leq \frac{v^2}{\lambda^2\eta^2}\e^{\frac{v}{\eta\lambda}}d_f(x),
    \end{align*}
    and so for each $v\in (0,\infty)$
    \begin{align*}
    \lim_{\lambda\to\infty} \lambda|T(x,v/\lambda,\lambda,z)|\to 0.
    \end{align*}
    Thus, from \eqref{pap1-eq:phi*expansion}, for  any $v$ we have
    \begin{align}\label{pap1-eq:lamtoinftybeforeintegral}
    \lim_{\lambda\to\infty}(-\lambda d_f(x)+\lambda\phi^*(x,v/\lambda)) = \ota v\lim_{\lambda\to\infty}\int_0^{\infty} f(x,y)\mathcal{H}^{\lambda}(z,y)\mu_w(\dd{y}).
    \end{align}
    What remains now is to justify Claim \ref{pap1-claimbound}, and taking the limit $\lambda\to\infty$ inside the integral in \eqref{pap1-eq:lamtoinftybeforeintegral}.

 First we consider the homogeneous case when $f \equiv 1$. Recall from Remark \ref{remark:st:homogeneous}, that due to the lack of dependency of one coordinate, we denote $\widetilde{\phi^*}(u)= \phi^*(x,v/\lambda)$ Then, 
    \[
    \widetilde{\phi^*}(u) = 1 - \Khorunzkyintegral \e^{\ota vz}\e^{-\lambda + \lambda\widetilde{\phi^*}(v/\lambda)}\dd{v}, 
    \]
    and from \eqref{pap1-eq:lamtoinftybeforeintegral} we have $\lim_{\lambda\to\infty} (-\lambda+\lambda\widetilde{\phi^*}(v/\lambda)) = \ota v\St_{\mu_f}(z)$. Moreover, from Corollary \ref{pap1-Stieltjescorollary}, we have 
    \[
    \St_{\mu_{\lambda}}(z) = \ota\int_0^{\infty}\e^{\ota vz}\e^{-\lambda + \lambda\widetilde{\phi^*}(v/\lambda)}\dd{v}.
    \]
    Since $f\equiv 1$, from \eqref{eq:phibdd} we have that $C_f = 1$ and $|\widetilde{\phi^*}| \leq 1$. Then, $|\e^{-\lambda + \lambda\widetilde{\phi^*}}| \leq 1$, justifying Claim \ref{pap1-claimbound}. Thus, the expression inside the integral is uniformly bounded by $\e^{-\eta v}$. Using dominated convergence, we can pull the limit $\lambda\to\infty$ inside the integral to obtain
    \begin{align*}
    \St_{\mu_f}(z) = \ota\int_0^{\infty}\e^{\ota vz}\e^{\ota v\St_{\mu_f}(z)}\dd{v} = -\frac{1}{z+\St_{\mu_f}(z)},
    \end{align*}
    which is precisely the Stieltjes transform of the semicircular law. 
    
    In the case of general $f$, recall from \eqref{eq:g_N} that for any $x$ and $u$, 
    \begin{align*}
    \phi^*(x,u) = \lim_{N\to\infty}\frac{1}{N}\E \left[\sum_{i=1}^Nf(x,w_i)\e^{\ota u r^N_{ii}}\right].
    \end{align*}
    Now, for any $N$, by trivially bounding the complex exponential $\e^{\ota ur_{ii}^N}$ by 1 for any $i$, we have that 
    \begin{align*}
    \left|\frac{1}{N}\E \sum_{i=1}^N f(x,w_i)\e^{\ota u r^N_{ii}}  \right| \leq \frac{1}{N}\sum_{i=1}^N|f(x,w_i)| = \frac{1}{N}\sum_{i=1}^Nf(x,w_i).
     \end{align*}
    Thus, by triangle inequality, we have that 
    \begin{align*}
    |\phi^*(x,u)| \leq \left|  \phi^*(x,u) - \E[ g_N(x,u,z)] \right| + \frac{1}{N}\sum_{i=1}^Nf(x,w_i).
    \end{align*}
    Thus, we have that 
    \begin{align}\label{pap1-eq:phi*g_Nerror}
    -\frac{\lambda}{N}\sum_{i=1}^Nf(x,w_i) + \lambda |\phi^*(x,u)| &\leq \lambda\sqrt{1+u} \frac{1}{\sqrt{1+u}} \left|  \phi^*(x,u) - \E[ g_N(x,u,z)] \right|\nonumber\\
    &\leq \lambda\sqrt{1+u} \hspace{0.1cm}\| \phi^* - \E g_N\|_{\Ba}.
    \end{align}
    Taking $N\to\infty$ on both sides in \eqref{pap1-eq:phi*g_Nerror} yields that 
    \begin{align*}
    -\lambda d_f(x) + \lambda |\phi^*(x,u)| \leq 0.
    \end{align*}
    Using this, we conclude that
    \begin{align}\label{pap1-eq:exponentialforDCT}
    \left| \e^{-\lambda d_f(x) + \lambda \phi^*(x,u)} \right| \leq \e^{-\lambda d_f(x)}\e^{\lambda |\phi^*(x,u)|} \leq 1
    \end{align}
    for any $x$ and $u$, proving Claim \ref{pap1-claimbound}. Now, to evaluate $\lim_{\lambda\to\infty}\St_{\mu_{\lambda}}(z)$, we take the limit inside the integral in the RHS of \eqref{eq:stmain} using DCT, which we can use from \eqref{pap1-eq:exponentialforDCT}. This gives us 
    \begin{align*}
\St_{\mu_f}(z)  = \lim_{\lambda\to\infty}\St_{\mu_\lambda}(z) = \ota\int_0^{\infty}\int_0^{\infty}\e^{\ota vz}\lim_{\lambda\to\infty}\left(\e^{-\lambda d_f(y)+\lambda\phi^*(y,v/\lambda)}\right)\dd{v}\hspace{0.2cm}\mu_w{(\dd{y})}.
    \end{align*}
    and so, using \eqref{pap1-eq:lamtoinftybeforeintegral}, we get
    \begin{align}\label{pap1-eq:limitingstieltjespenultimate}
    \St_{\mu_f}(z) = \ota \int_0^{\infty}\int_0^{\infty}\e^{\ota vz} \lim_{\lambda\to\infty}\e^{\ota v \int_0^{\infty} f(x,y)\mathcal{H}^{\lambda}(z,x)\mu_w(\dd{x})}\dd{v}\hspace{0.2cm}\mu_w{(\dd{y})}.
    \end{align}
    Recall from \eqref{pap1-eq:Hz(y)} that 
    \begin{align*}
    \mathcal{H}^{\lambda}(z,y) = \ota\int_0^{\infty}\e^{\ota vz}\e^{-\lambda d_f(y)+\lambda\phi^*(y,v/\lambda)}\dd{v}. 
    \end{align*}
    Again using \eqref{pap1-eq:exponentialforDCT}, we have that the integral is bounded in absolute value, and so, using DCT allows us to define 
    \begin{align*}
    \mathcal{H}(z,y) := \lim_{\lambda\to\infty}\mathcal{H}^{\lambda}(z,y)
    \end{align*}
    where $\int_0^{\infty}\mathcal{H}(z,y)\mu_w(\dd{y}) = \St_{\mu_f}(z) $. Moreover, since $|\mathcal{H}^{\lambda}(z,y)|$ is bounded by a constant, and $\mu_w$ is a probability measure, we use DCT once again to take the limit $\lambda\to\infty$ inside $\int_0^{\infty} f(x,y)\mathcal{H}^{\lambda}(z,x)\mu_w(\dd{x})$. Thus, we obtain 
    \[
    \St_{\mu_{\infty}}(z) = \ota \int_0^{\infty}\int_0^{\infty}\e^{\ota vz} \e^{\ota v \int_0^{\infty} f(x,y)\mathcal{H}(z,x)\mu_w(\dd{x})}\dd{v}\hspace{0.2cm}\mu_w(\dd{y}) = -\int_0^{\infty} \frac{\mu_w(\dd{y})}{z+\int_0^{\infty}f(x,y)\mathcal{H}(z,x)\mu_w(\dd{x})}.
    \]
    The proof follows by observing that $\mathcal H(z,x)$ satisfies the analytic equation defined in \eqref{eq:hzx}.
    \end{proof}

\begin{wrapfigure}{l}{0.07\textwidth} 
  \vspace{-\intextsep}   \includegraphics[width=0.07\textwidth]{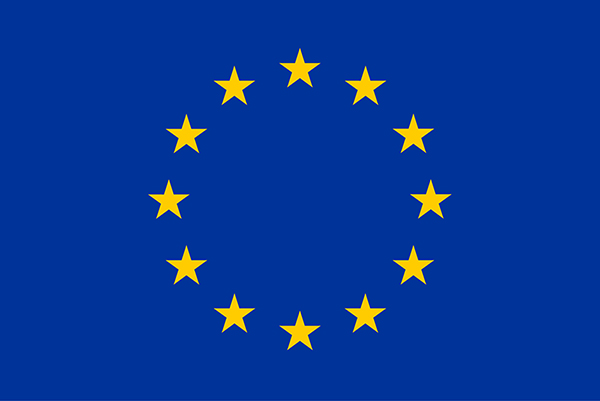}
\end{wrapfigure}
\noindent The work of L.A., R.S.H. and N.M. is supported in part by the Netherlands Organisation for Scientific Research (NWO) through the Gravitation NETWORKS grant 024.002.003. The work of N.M. is further supported by the European Union’s Horizon 2020 research and innovation programme under the Marie Skłodowska-Curie grant agreement no. 945045.

\bibliographystyle{abbrvnat}
\bibliography{reference.bib}

\section{\bf Appendix} \label{pap1-appendix}
\noindent \begin{proposition}[Banach Space]
Let $X=[0,\infty)^2$ and consider the space $\Ba$ defined by 
\[
\Ba = \left\{ \phi:X\to \C \hspace{0.2cm} \text{analytic } \middle| \hspace{0.2cm} \sup_{x,y\geq 0}\frac{|\phi(x,y)|}{\sqrt{1+y}} <\infty  \right\}
\]
and consider the norm 
\[
\|\phi\|_{\Ba} = \sup_{x,y\geq 0} \frac{|\phi(x,y)|}{\sqrt{1+y}}.
\]
Then, $(\Ba,\|\cdot\|_{\Ba})$ is a Banach space.\end{proposition}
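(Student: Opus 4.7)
The plan is to verify the two ingredients of a Banach space: that $\|\cdot\|_{\mathcal{B}}$ is a norm on $\mathcal{B}$, and that $\mathcal{B}$ is complete with respect to it. The norm axioms are immediate from the supremum definition. Positive definiteness follows because $\sqrt{1+y} \geq 1$, so $\|\phi\|_{\mathcal{B}} = 0$ forces $\phi \equiv 0$; absolute homogeneity and the triangle inequality pass through the supremum in a standard way.

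For completeness, I would take a Cauchy sequence $(\phi_n) \subset \mathcal{B}$ and first extract a pointwise limit. For each fixed $(x,y) \in [0,\infty)^2$, the bound
\[
|\phi_n(x,y) - \phi_m(x,y)| \leq \sqrt{1+y}\,\|\phi_n - \phi_m\|_{\mathcal{B}}
\]
shows $(\phi_n(x,y))_n$ is Cauchy in $\mathbb{C}$, hence converges to some value $\phi(x,y)$. To upgrade this to convergence in $\|\cdot\|_{\mathcal{B}}$, fix $\varepsilon > 0$ and pick $N$ with $\|\phi_n - \phi_m\|_{\mathcal{B}} < \varepsilon$ for $n,m \geq N$. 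Sending $m \to \infty$ in the pointwise inequality $|\phi_n(x,y) - \phi_m(x,y)|/\sqrt{1+y} < \varepsilon$ and then taking the supremum over $(x,y)$ yields $\|\phi_n - \phi\|_{\mathcal{B}} \leq \varepsilon$ for all $n \geq N$.

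The main obstacle is showing the limit $\phi$ retains the analyticity built into the definition of $\mathcal{B}$. The key observation is that $\sqrt{1+y}$ is bounded on every compact subset $K \subset [0,\infty)^2$, so convergence in the weighted norm $\|\cdot\|_{\mathcal{B}}$ implies uniform convergence of $\phi_n$ to $\phi$ on each such $K$. Since each $\phi_n$ is analytic and the convergence is locally uniform, the Weierstrass theorem on uniform limits of analytic functions (or equivalently Morera's theorem applied on an appropriate complex neighborhood to which the $\phi_n$ extend) ensures $\phi$ is analytic as well. To conclude $\phi \in \mathcal{B}$, apply the triangle inequality $\|\phi\|_{\mathcal{B}} \leq \|\phi - \phi_n\|_{\mathcal{B}} + \|\phi_n\|_{\mathcal{B}} < \infty$ for any fixed $n \geq N$, which finishes the argument.
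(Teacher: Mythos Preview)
Your proof is correct and follows the same overall strategy as the paper: verify the norm axioms, take a Cauchy sequence, extract a pointwise limit via the weighted Cauchy estimate, pass to the limit in the norm inequality, and invoke the triangle inequality for membership in $\mathcal{B}$.

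Two differences in execution are worth noting. First, the paper dresses up the pointwise-limit step in unnecessary measure-theoretic language, introducing Lebesgue-null exceptional sets $E_{mn}$ on which the inequality $|\phi_n(x,y)-\phi_m(x,y)|\le\|\phi_n-\phi_m\|\sqrt{1+y}$ might fail; but since $\|\cdot\|_{\mathcal{B}}$ is a genuine supremum (not an essential supremum), these sets are empty and your direct argument is cleaner. Second, and more substantively, the paper's proof never verifies that the limit $\phi$ is analytic; it simply writes ``use triangle inequality to conclude $\phi\in\mathcal{B}$,'' which only checks finiteness of the norm. Your observation that $\|\cdot\|_{\mathcal{B}}$-convergence implies locally uniform convergence (because $\sqrt{1+y}$ is bounded on compacta), and your appeal to a Weierstrass-type theorem, fills this gap. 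One caveat: since the domain $[0,\infty)^2$ is real, ``analytic'' here presumably means real-analytic or admits an extension to a complex neighbourhood, and locally uniform limits of real-analytic functions are not real-analytic in general. In the paper's actual usage the relevant analyticity is in the complex parameter $z$ (suppressed in the Banach-space statement), where your Weierstrass argument applies cleanly; you might flag that the precise meaning of analyticity deserves clarification.
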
\label{pap1-Banachspace} 

\noindent \begin{proof}[Proof of Proposition \ref{pap1-Banachspace}]

For ease of notation, throughout this argument, $\|\cdot\| := \|\cdot\|_{\Ba}$. 
Clearly $\|\cdot\|$ is a norm, and thus, $(\Ba,\|\cdot\|_{\Ba})$ is a normed vector space. 

Let $\{\phi_n\}_n$ be a Cauchy sequence in $(\Ba,\|\cdot\|_{\Ba})$. Thus, for all $\epsilon>0$, there is an $N_{\varepsilon} \in \mathbb{N}$ such that for all $m,n > N_{\varepsilon}$, 
\[
\|\phi_m - \phi_n\| < \varepsilon.
\]
Let $\mu$ be the Lebesgue measure on $X$. Define 
\begin{align*}
E_{mn} = \{(x,y) \in X : |\phi_n(x,y) - \phi_m(x,y)| > \|\phi_n - \phi_m\|\sqrt{1+y}\}.
\end{align*} 
Then, $\mu(E_{mn}) = 0$. Let $E = \bigcup\limits_{m,n}E_{mn}$ and $F=E^c$. Then, $\mu(E)=0$, and 
\[
F = \{(x,y)\in X :  |\phi_n(x,y) - \phi_m(x,y)| \leq \|\phi_n - \phi_m\|\sqrt{1+y} \}.
\]
So, for all $\varepsilon>0$, we have an $N_{\varepsilon}$ such that for all $(x,y)\in F$ and $m,n>N_{\varepsilon}$,
\[
|\phi_n(x,y) - \phi_m(x,y)| < \varepsilon\sqrt{1+y}.
\]
Let $\psi_m(x,y) := \frac{\phi_m(x,y)}{\sqrt{1+y}}$. Then, we have for all $(x,y)\in F$ and $m,n>N_{\varepsilon}$
\[
|\psi_n(x,y) - \psi_m(x,y)| < \varepsilon.
\]
In other words, for all $(x,y)\in F$, denoting $a_n=\psi_n(x,y)$ gives us that  $\{a_n\}_n$ is a Cauchy sequence in the metric space $(\C,|\cdot|)$. Since $\C$ is a complete metric space, for all $(x,y)\in F$, there exists a limit $a := \lim_n a_n $, that is, for all $(x,y)\in F$, there exists a $\psi$ such that 
\[
\psi(x,y) := \lim_{n\to\infty}\psi_n(x,y).
\]
 For $(x,y) \in E$ with $\mu(E)=0$, $\psi(x,y)=0$. This is a well-defined limit. Note that since $\phi_n$ lives in $(\Ba,\|\cdot\|_{\Ba})$, $\psi_n$ lives in $(L^{\infty}(X),\|\cdot\|_{\infty})$, and we thus conclude that 
\[
\|\psi_n - \psi_m\|_{\infty} < \varepsilon.
\]
Passing the limit through $m$, we have
\[
\|\psi_n - \psi\|_{\infty} < \varepsilon.
\]
For all $(x,y)\in X$, define 
\[
\phi(x,y) = \psi(x,y)\sqrt{1+y}.
\]
One can see that $\|\phi_n - \phi\| = \|\psi_n - \psi\|_{\infty}$. Use triangle inequality to conclude $\phi \in (\Ba,\|\cdot\|_{\Ba})$
\end{proof}
\noindent For the next theorem, we refer the reader to \citet[Theorem 16.8]{Billingsley}. 
\begin{theorem}[Interchanging derivative and integral]\label{pap1-Leibnizrulethm}
    Consider the measure space $(\Omega,\mathcal{F},\mu)$ and an open set $A\subset \mathbb{R}$. Let $f:A\times\Omega\to \C$ be such that for each $x\in A$, $\omega\mapsto f(x,\omega)$ is $\mu-$integrable, and moreover for $\mu-$a.e. $\omega$, $x\mapsto f(x,\omega)$ is continuous. Consider the function $g:A\to\C$ defined by 
    \begin{align*}
        g(x) = \int_{\Omega}f(x,\omega)\mu(\dd{\omega}).
    \end{align*}
    Suppose that for each $\omega$ the partial derivative $\partial_xf(x,\omega)$ of $f$ with respect to $x$ exists. Then, if for every $x$, there is a nonnegative $\mu-$integrable function $h_x:\Omega\to\C$ and a neighbourhood $U_x$ containing $x$ such that for all $x'\in U_x$, $|\partial_{x'}f(x',\omega)| \leq h_x(\omega)$, then, $g(x)$ is continuously differentiable and 
    \begin{align*}
        \partial_xg(x) = \int_{\Omega}\partial_xf(x,\omega)\mu(\dd{\omega}).
    \end{align*}
\end{theorem}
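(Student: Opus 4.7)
The plan is to verify the interchange of derivative and integral by combining pointwise convergence of difference quotients with Lebesgue's dominated convergence theorem, in the standard Leibniz-rule manner. First I would fix $x_0 \in A$ and select a neighbourhood $U_{x_0}$ together with the $\mu$-integrable majorant $h_{x_0}$ supplied by the hypothesis. For any sequence $x_n \in U_{x_0} \setminus \{x_0\}$ with $x_n \to x_0$, I introduce the difference quotients
\[
q_n(\omega) := \frac{f(x_n,\omega) - f(x_0,\omega)}{x_n - x_0}.
\]
By the assumed existence of $\partial_x f(x_0,\omega)$ for every $\omega$, the $q_n(\omega)$ converge pointwise to $\partial_x f(x_0,\omega)$.

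Next I would produce an integrable envelope for $\{q_n\}$. Writing $f = \Re f + \ota\, \Im f$ and applying the one-variable mean value theorem to the real and imaginary parts separately, for each $\omega$ and each $n$ large enough that $x_n \in U_{x_0}$ there exist intermediate points $\xi_n^R(\omega), \xi_n^I(\omega) \in U_{x_0}$ on the segment joining $x_0$ to $x_n$ with
\[
\Re q_n(\omega) = \partial_x \Re f(\xi_n^R(\omega),\omega), \qquad \Im q_n(\omega) = \partial_x \Im f(\xi_n^I(\omega),\omega).
\]
The domination hypothesis then yields $|q_n(\omega)| \leq 2\, h_{x_0}(\omega)$ for all such $n$. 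Applying dominated convergence, I obtain
\[
\partial_x g(x_0) = \lim_n \frac{g(x_n) - g(x_0)}{x_n - x_0} = \int_\Omega \partial_x f(x_0,\omega)\, \mu(\dd{\omega}),
\]
establishing differentiability of $g$ at $x_0$ together with the claimed integral formula. Since $x_0$ and the sequence were arbitrary, $g$ is differentiable on $A$.

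For the continuous differentiability of $g$, I would run an analogous DCT argument on $\partial_x f(y_n,\omega)$ for an arbitrary $y_n \to x_0$ in $U_{x_0}$; the envelope $h_{x_0}$ again dominates this sequence uniformly. The main obstacle is that pointwise convergence $\partial_x f(y_n,\omega) \to \partial_x f(x_0,\omega)$ is not directly among the hypotheses. I plan to extract it by noting that the difference quotients formed around $y \in U_{x_0}$ are continuous in $y$ (inherited from continuity of $f$) and uniformly bounded by $h_{x_0}$, so that a Vitali-type or equicontinuity argument transfers continuity to the pointwise limit $\partial_x f(\,\cdot\,,\omega)$, after which a second DCT application yields continuity of $\partial_x g$ on $A$.
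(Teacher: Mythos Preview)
The paper does not actually prove this theorem; it simply records it in the appendix with a reference to Billingsley, Theorem~16.8. So there is no in-paper argument to compare against, and your dominated-convergence approach for the differentiability statement and the integral formula is exactly the standard route (and essentially what Billingsley does): fix $x_0$, bound the difference quotients via the mean value theorem using the local majorant $h_{x_0}$, and pass to the limit by DCT. That part is fine.

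Where your proposal becomes shaky is the \emph{continuous} differentiability. You correctly flag that pointwise continuity of $y\mapsto\partial_x f(y,\omega)$ is not among the hypotheses, but the ``Vitali-type or equicontinuity'' patch you sketch does not repair this: continuity of the difference quotients in the base point together with a uniform integrable bound does not force continuity of their pointwise limit. A concrete obstruction is $f(x,\omega)=x^2\sin(1/x)$ (extended by $0$ at $x=0$), which is everywhere differentiable with derivative bounded on a neighbourhood of $0$, yet $\partial_x f$ is discontinuous at $0$. Under the stated hypotheses alone, one should expect only differentiability of $g$ with the displayed formula; the ``continuously'' is either a mild overstatement in the paper or is meant to be read under an implicit extra assumption that $\partial_x f(\cdot,\omega)$ is continuous (which is what Billingsley effectively uses and which holds in every application the paper makes of this theorem, since there the integrands are smooth in the relevant parameter). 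For your write-up, I would prove the derivative formula as you do and then, for the continuity of $g'$, explicitly add the standing assumption that $\partial_x f(\cdot,\omega)$ is continuous for $\mu$-a.e.\ $\omega$ and invoke DCT once more with the same dominator $h_{x_0}$.
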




\end{document}